\documentclass{amsart}

\usepackage{amsmath}
\usepackage{amsthm}
\usepackage{amssymb}
\usepackage{enumitem}\setlist[enumerate]{nosep,label=\textnormal{(\arabic*)}}
\usepackage[hidelinks]{hyperref}
\usepackage[capitalise]{cleveref}
\usepackage{mathrsfs}
\usepackage{mathtools}
\usepackage[new]{old-arrows}
\usepackage{seqsplit}
\usepackage{stmaryrd}
\usepackage{thmtools}
\usepackage[minimal]{yhmath}

\usepackage{tikz}
\usetikzlibrary{cd}
\usetikzlibrary{positioning, angles, quotes}
\declaretheorem[numberwithin=section,name=Theorem]{thm}
\declaretheorem[sibling=thm,name=Corollary]{cor}
\declaretheorem[sibling=thm,name=Proposition]{prop}
\declaretheorem[sibling=thm,name=Lemma]{lem}
\declaretheorem[sibling=thm,name=Conjecture]{conj}
\declaretheorem[sibling=thm,name=Claim]{claim}
\declaretheorem[sibling=thm,name=Definition,style=definition]{defn}
\declaretheorem[sibling=thm,name=Example,style=definition]{ex}
\declaretheorem[sibling=thm,name=Remark,style=remark]{rem}

\Crefname{thm}{Theorem}{Theorems}
\Crefname{lem}{Lemma}{Lemmas}
\Crefname{defn}{Definition}{Definitions}
\Crefname{claim}{Claim}{Claims}
\Crefname{ex}{Example}{Examples}
\Crefname{prop}{Proposition}{Propositions}

\newcommand{\SL}{\mathrm{SL}}

\newcommand{\pres}[2]{\langle{#1}\mid{#2}\rangle}

\newcommand{\FP}{\mathrm{FP}}

\renewcommand{\b}[1]{b^{(2)}_{#1}}

\newcommand{\Dk}[1]{\mathcal D_{k[{#1}]}}

\newcommand{\novk}[2]{\widehat{k[{#1}]}^{#2}}

\newcommand{\C}{\mathbb{C}}
\newcommand{\N}{\mathbb{N}}
\newcommand{\Q}{\mathbb{Q}}
\newcommand{\R}{\mathbb{R}}
\newcommand{\Z}{\mathbb{Z}}

\newcommand{\inv}{^{-1}}

\DeclareMathOperator{\cd}{cd}

\let\H\relax
\DeclareMathOperator{\H}{H}

\DeclareMathOperator{\link}{link}

\DeclareMathOperator{\supp}{supp}
\DeclareMathOperator{\Tor}{Tor}

\DeclareMathOperator{\Stab}{Stab}

\makeatletter
\newsavebox{\@brx}
\newcommand{\llangle}[1][]{\savebox{\@brx}{\(\m@th{#1\langle}\)}%
  \mathopen{\copy\@brx\mkern2mu\kern-0.9\wd\@brx\usebox{\@brx}}}
\newcommand{\rrangle}[1][]{\savebox{\@brx}{\(\m@th{#1\rangle}\)}%
  \mathclose{\copy\@brx\mkern2mu\kern-0.9\wd\@brx\usebox{\@brx}}}
\makeatother

\newcounter{comments}

\title{Coherent RFRS groups}

\author{Sam P.\  Fisher}
\address{Instituto de Ciencias Matem\'aticas, CSIC-UAM-UC3M-UCM}
\email{samuel.fisher@icmat.es}

\address{Instituto de Ciencias Matem\'aticas, CSIC-UAM-UC3M-UCM}
\email{marco.linton@icmat.es}

\author{Pablo S\'anchez-Peralta}
\address{Departamento de Matem\'aticas, Universidad Aut\'onoma de Madrid}
\email{pablo.sanchezperalta@uam.es}

\begin{document}

\maketitle
\centerline{\textit{With an appendix by Marco Linton}}

\begin{abstract}
    We prove that a finitely generated virtually RFRS group of cohomological dimension at most $2$ is coherent if and only if its second $L^2$-Betti number vanishes if and only if it is virtually free-by-cyclic. The non-vanishing of the second $L^2$-Betti number provides the first known global obstruction to coherence in any reasonably wide class of groups, allowing for proofs of incoherence without needing to exhibit explicit witnesses to incoherence. 
    
    As applications of this result, we completely characterise coherence among two-dimensional Coxeter groups, confirming conjectures of Jankiewicz and Wise, and show that incoherence is generic in groups of nonpositive deficiency, confirming a conjecture of Wise. We also find that, among virtually compact special groups of virtual cohomological dimension two, coherence is algorithmically decidable and is a quasi-isometry, measure equivalence, and profinite invariant.

    In an appendix, Marco Linton applies one of the main results to prove that cubulated locally quasi-convex hyperbolic groups are virtually free-by-cyclic, solving problems of Abdenbi--Wise and Wise in the cubulated case.
\end{abstract}

%%%
%%% INTRODUCTION
%%%
\section{Introduction}

A group is \emph{coherent} if all its finitely generated subgroups are finitely presented. On the first page of his survey on coherence \cite{Wise_anInvitation}, Wise writes that the ``main goal in this topic is to understand exactly which groups are coherent, and more realistically, to find useful characterizations of coherent groups''. The purpose of this article is to give such a characterisation in the class of virtually RFRS groups of cohomological dimension at most two. As suggested in  \cite[Section 3]{Wise_anInvitation}, coherence is a low-dimensional phenomenon, especially among hyperbolic groups, where all known coherent groups are of cohomological dimension two, with the notable exceptions coming from fundamental groups of closed hyperbolic $3$-manifolds. One can build coherent groups of arbitrarily high cohomological dimension by either taking virtual graphs of coherent groups with virtually polycyclic edge groups, or by a recent result of Jaikin-Zapirain, Linton, and the second author by taking one-relator products of locally indicable coherent groups \cite{JaikinLintonSanchez_OneRelProd} (though these constructions cannot as of yet yield high-dimensional hyperbolic coherent groups).

In dimension $2$, Wise conjectures that coherence is related to the nonpositive immersions property. A $2$-complex $X$ has \emph{nonpositive immersions} if for every compact, connected $2$-complex $Y$ and every cellular immersion $Y \looparrowright X$, either $\chi(Y) \leqslant 0$ or $\pi_1(Y) = \{1\}$. Wise conjectures that if $X$ has nonpositive immersions, then $\pi_1(X)$ is coherent, and speculates as to whether coherent groups of geometric dimension $2$ are always the fundamental group of a $2$-complex with nonpositive immersions (see \cite[Conjecture 1.10]{Wise_JussieuCoherenceNPI} and \cite[Section 21, Problems 9 and 11]{Wise_anInvitation}). A conjecture attributed to Gromov predicts that a group $G$ of geometric dimension at most $2$ is isomorphic to the fundamental group of a $2$-complex with nonpositive immersions if and only if $\b{2}(G) = 0$ \cite{WiseEnergy}, \cite[Section 16]{Wise_anInvitation}. We therefore have the following bold conjecture which gives interesting characterisations of coherence among $2$-dimensional groups. These equivalences are also either conjectured or raised as questions by Jaikin-Zapirain and Linton in \cite[Section 7]{JaikinLinton_coherence}.

\begin{conj}\label{conj: intro}
    Let $G$ be a group of geometric dimension at most $2$. The following are equivalent:
    \begin{enumerate}[label=\textnormal{(\arabic*)}]
        \item\label{item: conj coh} $G$ is coherent;
        \item\label{item: conj B2 = 0} $\b{2}(G) = 0$;
        \item\label{item: conj NPI} $G \cong \pi_1(X)$, where $X$ is a $2$-complex with nonpositive immersions.
    \end{enumerate}
\end{conj}

Since the nonpositive immersions property for $X$ implies that $\pi_1(X)$ is locally indicable \cite[Theorem 3.3]{Wise_JussieuCoherenceNPI}, \cref{conj: intro} predicts, in particular, that coherent two-dimensional groups are locally indicable. The purpose of this article is to establish the implication \ref{item: conj coh} $\Rightarrow$ \ref{item: conj B2 = 0} for residually finite rationally solvable (RFRS) groups, which form a rich subclass of locally indicable groups. In doing so, we will also virtually establish the implication \ref{item: conj coh} $\Rightarrow$ \ref{item: conj NPI}, which, combined with the work of Jaikin-Zapirain and Linton \cite{JaikinLinton_coherence}, essentially settles \cref{conj: intro} in the class of virtually RFRS groups.

The class of RFRS groups was introduced by Agol \cite{AgolCritVirtFib} in connection with Thurston's Virtual Fibring Conjecture for hyperbolic $3$-manifolds; it plays a central role in geometric group theory, and contains the class of special groups --- introduced by Haglund and Wise \cite{HaglundWise_special} --- as a proper subclass. Many interesting groups are known to be virtually special, including fundamental groups of hyperbolic $3$-manifolds \cite{AgolHaken,Wise_structureQCH}, limit groups \cite{Wise_structureQCH}, many one-relator groups \cite{Linton_oneRelHierarchy} including one-relator groups with torsion \cite{Wise_structureQCH}, balanced graphs of free groups with cyclic edge groups \cite{HsuWise_graphOfFree}, Coxeter groups \cite{HaglundWise_CoxeterSpecial}, small cancellation groups \cite{Wise_cubulatingSmallCancellation}, and hyperbolic free-by-cyclic groups \cite{HagenWise_freebyZ,Linton_FbyZembedding}, to name just a few.

\emph{Homological coherence} (over $\Q$) is an a priori weaker condition than coherence which says that all finitely generated subgroups of $G$ have the homological finiteness property $\FP_2(\Q)$. Our main result is the following.

\begin{thm}[{\cref{cor: hom coherence implies vanishing}}]\label{thm: A}
    If $G$ is a virtually RFRS group with $\cd_\Q(G) \leqslant 2$, then $G$ is homologically coherent over $\Q$ if and only if $\b{2}(G) = 0$.
\end{thm}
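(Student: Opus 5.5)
We prove the two implications separately. Both directions may be checked after passing to a finite-index subgroup. Homological coherence over $\Q$ passes to finite-index subgroups and overgroups, since $\FP_2(\Q)$ is a commensurability invariant of finitely generated subgroups. Likewise $\b{2}$ scales multiplicatively under finite index. So we may assume $G$ is RFRS with $\cd_\Q(G)\leqslant 2$. Throughout we work with the Linnell skew field $\mathcal D_{\Q G}$, and we use Jaikin-Zapirain's theorem that for RFRS groups it is the universal division ring of fractions of $\Q G$; we also use the Kielak/Jaikin-Zapirain--Fisher characterisation of virtual algebraic fibring in terms of $L^2$-Betti numbers.

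\emph{($\b{2}(G)=0\Rightarrow$ homological coherence.)} This is essentially the direction established by Jaikin-Zapirain and Linton, and we would reprove it along the following lines. Let $H\leqslant G$ be finitely generated. Then $\cd_\Q(H)\leqslant 2$, and $\b{2}(H)\leqslant \b{2}(G)$-type monotonicity holds in dimension two: since $\cd_\Q G\leqslant 2$, the module $\H_2$ with $\mathcal D$ coefficients is a submodule of $\mathcal D\otimes C_2$. This monotonicity gives $\b{2}(H)=0$ after inducing up, using flatness of $\mathcal D_{\Q G}$ over $\mathcal D_{\Q H}$. Next take a resolution of $\Q$ over $\Q H$ with $P_0,P_1$ finitely generated and $P_2$ projective, possibly not finitely generated. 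Vanishing of $\b{2}$ together with $\cd\leqslant 2$ lets us control the kernel $K=\ker(P_1\to P_0)$. Because $\mathcal D\otimes K\to \mathcal D\otimes P_1$ is injective (this is the $\Tor_1$ vanishing given by $\b{2}=0$), $\dim_{\mathcal D}(\mathcal D\otimes K)<\infty$. We then want to deduce that $K$ is finitely generated. For that we would use that $\Q H$ satisfies a ``strong Hughes-free'' or Cohn-type criterion: a projective submodule of a finitely generated free module whose $\mathcal D$-rank is finite is finitely generated, via the Sylvester rank function being the universal one. This step is where the real input from Jaikin-Zapirain--Linton is needed.

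\emph{(Homological coherence $\Rightarrow\b{2}(G)=0$.)} This is the new direction, and we argue by contrapositive. Suppose $\b{2}(G)>0$; we want a finitely generated subgroup that is not $\FP_2(\Q)$. The plan is to find, after passing to finite index, a finitely generated subgroup $H$ with $\b{1}(H)=0$ but $\b{2}(H)\neq 0$, or more directly a homomorphism $\varphi\colon G\to\Z$ with finitely generated kernel $N$. If such an $N$ is $\FP_2(\Q)$, then $\H_2(N;\Q)$ computed via Novikov rings is finite dimensional on both sides. By the Kielak-type theorem (Fisher), $\b{i}(G)=0$ for $i\leqslant 2$ would then follow, contradicting $\b{2}(G)>0$. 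So the steps are as follows. First, show $\b{1}$ of a suitable finite-index subgroup vanishes, or at least that some character $\varphi$ lies in the $\Sigma^1(G;\Q)$ invariant after passing to finite index. Second, apply Fisher's equivalence between $\Sigma^n$-membership over $\Q$ for RFRS groups and vanishing of $\b{i}$ for $i\leqslant n$. The main obstacle is the first step: we must produce a finitely generated fibre even though $\b{1}(G)$ may be nonzero. To handle this, I would first use homological coherence to reduce to the case where $\b{1}$ vanishes. We pass to a finitely generated subgroup $L$ with $\b{1}(L)=0$ and $\b{2}(L)>0$. Such an $L$ exists because $\chi(G)=\b{0}-\b{1}+\b{2}$ controls the situation. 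Alternatively, we can take a free factor decomposition / JSJ-type splitting over $\mathcal D$ and choose a vertex group; homological coherence ensures $L$ is $\FP_2(\Q)$, so $L$ is of type $\FP(\Q)$ with finitely generated chain complex. Then Kielak's criterion gives a virtual fibring $L'\to\Z$ with kernel $N$ finitely generated. Homological coherence makes $N$ of type $\FP_2(\Q)$, hence $\b{2}(L')=0$ by Fisher's theorem, which is the desired contradiction.
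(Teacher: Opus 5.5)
\textbf{There is a genuine gap in the new direction} (homological coherence $\Rightarrow \b{2}(G)=0$). Your other direction is acceptable as a citation of Jaikin-Zapirain--Linton, since RFRS groups are locally indicable. The paper instead cites Fisher's virtual free-by-cyclic theorem together with Feighn--Handel. Your sketched reproof of their finite-generation step is not an argument, but that is only a citation issue.

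Your contrapositive hinges on finding a finitely generated $L\leqslant G$ with $\b{1}(L)=0<\b{2}(L)$. You need this because Kielak's theorem gives a virtual fibration with finitely generated kernel only when $\b{1}=0$. If $\b{1}(L)>0$, no finite-index subgroup of $L$ has a character with finitely generated kernel, so your argument cannot even start. You give no argument that such an $L$ exists:
\begin{itemize}
\item For infinite $L$ of type $\FP(\Q)$ with $\cd_\Q(L)\leqslant 2$, such an $L$ would in particular have $\chi(L)>0$.
\item The Euler characteristic of $G$ does not control whether such a subgroup exists. For example, $(F_2\times F_2)*F_3$ is RFRS of dimension two with $\b{2}=1$ but $\chi=-2$.
\item Homological coherence supplies no mechanism for ``reducing to $\b{1}=0$''.
\item There is no ``JSJ-type splitting over $\mathcal D$'' you can invoke.
\end{itemize}
Under the coherence hypothesis, producing such an $L$ is tantamount to proving the theorem, so this step is the whole content.

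The missing idea is that you do not need a finitely generated fibre at all. First reduce to $G$ finitely generated: $\H_2(G;\mathcal D_{\Q[G]})$ is a direct limit of the induced $\H_2$ of finitely generated subgroups, which are of type $\FP(\Q)$ by coherence and $\cd\leqslant 2$. Then apply the Okun--Schreve form of Kielak's theorem, which needs no assumption on $\b{1}$. It gives a finite-index $H\leqslant G$ and a character $\chi\colon H\to\Z$ with
\[
\H_2\bigl(H;\widehat{\Q[H]}^\chi\bigr)\cong\bigl(\widehat{\Q[H]}^\chi\bigr)^{[G:H]\,\b{2}(G)} .
\]
The paper then shows that homological coherence forces this Novikov homology to vanish, even though $N=\ker\chi$ is typically not finitely generated:
\begin{itemize}
\item Use the short exact sequence $0\to\Q[H]\to\widehat{\Q[H]}^{\chi}\oplus\widehat{\Q[H]}^{-\chi}\to\prod_\Z\Q[N]\to0$. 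This reduces the problem to showing $\H_q(N;\prod\Q[N])=0$ for $q=1,2$.
\item Write $I_{\Q[N]}$ as the direct limit of the induced augmentation ideals of finitely generated subgroups of $N$. Each of these is of type $\FP_\infty$ by coherence.
\item For modules of type $\FP_\infty$, $\Tor$ against a direct product vanishes in positive degrees, and $M\otimes\prod\Q[N]\to\prod M$ is an isomorphism. Since $\Tor$ commutes with direct limits, this kills $\H_q(N;\prod\Q[N])$ for $q=1,2$.
\item The Lyndon--Hochschild--Serre spectral sequence over $\Z$, which has $\cd\Z=1$, then gives $\H_2(H;\widehat{\Q[H]}^\chi)=0$.
\end{itemize}
This handles arbitrary $\b{1}$, which your fibring route cannot.
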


To the best of our knowledge, all known proofs that a group $G$ is incoherent go via exhibiting a specific finitely generated subgroup of $G$ that is not finitely presented. A new feature of \cref{thm: A} is that it provides a global obstruction to coherence (the second $L^2$-Betti number), and therefore gives a method for proving that a group $G$ is incoherent not relying on finding explicit witnesses to incoherence, which can be an arduous task. This will be exploited in \cref{thm: Coxeter}, where we determine which two-dimensional Coxeter groups are incoherent without finding explicit witnesses to their incoherence. 

In arbitrary cohomological dimension, we also find that higher $L^2$-Betti numbers obstruct group algebra coherence, as well as the property that all finitely generated subgroups are of type $\FP_\infty(\Q)$, which is a strong form of coherence. This latter property is possessed by all known examples of coherent groups, and in particular by all coherent special groups (see \cref{rem:v_special}). We refer the reader to \cref{thm: coherence and L2 Betti} for a stronger version of the following result. Recall that a ring is \emph{(left) coherent} if all of its finitely generated left ideals are finitely presented (for group algebras, left and right coherence are equivalent, so we will drop the left/right specification in this setting).

\begin{thm}[\cref{thm: coherence and L2 Betti}] \label{thm: B}
    Let $G$ be a virtually RFRS group. If $\b{n}(G) > 0$ for some $n > 1$, then $G$ has a finitely generated subgroup that is not of type $\FP_\infty(\Q)$. In particular, $\Q[G]$ is an incoherent ring.
\end{thm}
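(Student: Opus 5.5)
We argue by contrapositive, using Kielak's fibring machinery for RFRS groups and its higher-dimensional extensions (Fisher; Jaikin-Zapirain), together with the Linnell skew field $\mathcal D_{\Q[G]}$. Since $L^2$-Betti numbers scale by the index under passing to finite-index subgroups, and type $\FP_\infty(\Q)$ of a subgroup is unaffected by such passage, we may assume $G$ itself is RFRS. Fix $n>1$ with $\b{n}(G)>0$, and take $n$ minimal with this property (so $n \geqslant 2$ and $\b{i}(G)=0$ for $1<i<n$). The goal is a finitely generated subgroup $H \leqslant G$ that is not of type $\FP_\infty(\Q)$. The natural candidate is the kernel of a surjection $G' \to \Z$, for $G'$ a finite-index subgroup of $G$.

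\emph{Step 1: choosing a character with finitely generated kernel.} Finite generation of $G$ is implicit, since otherwise $G$ itself is already a witness. If $\b{1}(G)>0$, then $G$ is not virtually fibred in low degrees. In that case I would still look for a finitely generated non-$\FP_\infty$ subgroup, for instance via a free-product-type splitting or by passing to a suitable subgroup. The main case, however, is $\b{1}(G)=0$. There, Kielak's theorem (in the version for type $\FP_1$: vanishing of $\b{1}$ over $\mathcal D_{\Q[G]}$ implies virtual algebraic fibring in degree $1$) gives a finite-index $G'$ and a character $\varphi\colon G'\to\Z$ whose kernel $H$ is finitely generated. More precisely, the RFRS structure gives an open set of characters $\chi$ in the Bieri--Neumann--Strebel invariant $\Sigma^1(G';\Q)$. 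Moreover, the characters lying in the higher invariants $\Sigma^m(G';\Q)$ are controlled by the vanishing of $\mathcal D$-homology in degrees up to $m$, via Novikov ring approximations $\novk{G'}{\chi}$.

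\emph{Step 2: $\FP_\infty$ kernel forces vanishing.} Suppose, for contradiction, that every such finitely generated kernel $H$ is of type $\FP_\infty(\Q)$. Then $H$ is of type $\FP_n(\Q)$, and $G'$ is an ascending HNN extension (here, a semidirect product) $H\rtimes\Z$ with $H$ of type $\FP_n$. By Lück's theorem on mapping tori, namely the vanishing of $L^2$-Betti numbers of $G'$ up to degree $n$ when the fibre has finitely many cells up to that dimension, this gives $\b{i}(G')=0$ for all $i\leqslant n$. Hence $\b{n}(G)=\b{n}(G')/[G:G']=0$, a contradiction. Note that Lück's argument only needs $H$ of type $\FP_n(\Q)$ over $\Q$, working with $\Q$-chain complexes and dimensions over $\mathcal N(G')$.

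\emph{Step 3: the case $\b{1}(G)>0$.} For $n>1$ I would handle this case by a different reduction. If $\b{1}(G)>0$, we want a finitely generated subgroup with nonvanishing higher Betti number and vanishing first Betti number. This is the step I expect to be the main obstacle. What I would try first is to replace $G$ by $G\times F_2$ or a similar product, which does not stay inside $G$, so that does not work. Instead, I would directly use a Künneth-free approach: apply Fisher's generalisation, which says that for RFRS $G$ of type $\FP_n$, the kernel is $\FP_n$ for some virtual character if and only if $\b{i}(G)=0$ for $i\leqslant n$. From this it follows that any finitely generated kernel which exists must fail to be $\FP_n$. If no finitely generated virtual kernel exists, I would instead exhibit a finitely generated subgroup of $G$ that is not finitely presented, coming from a non-trivial splitting detected by $\b{1}>0$. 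Finally, the incoherence of $\Q[G]$ follows from the standard fact that if $\Q[G]$ is coherent, then every finitely generated subgroup of type $\FP_2(\Q)$ is of type $\FP_\infty(\Q)$.
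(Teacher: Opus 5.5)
Your Steps 1--2 are fine as far as they go. For finitely generated RFRS $G$ with $\b{1}(G)=0$, Kielak's theorem gives a finite-index $G'$ and an epimorphism $G'\to\Z$ with finitely generated kernel $H$. The hypothesis makes $H$ of type $\FP_n(\Q)$, and the mapping-torus vanishing theorem then kills $\b{i}(G')$ for $i\leqslant n$. In that case this is a genuinely different argument from the paper's, and it even identifies the fibre $H$ as the witness.

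The gap is the case $\b{1}(G)>0$, which you leave open, and it is not a marginal case. There, \emph{no} finite-index subgroup of $G$ maps onto $\Z$ with finitely generated kernel. So the hypothesis ``every finitely generated subgroup is $\FP_\infty(\Q)$'' never applies directly to any kernel, and Fisher's criterion is vacuous. Nothing in your plan converts $\b{1}(G)>0$ into a splitting, or into a finitely generated subgroup with $\b{1}=0$ and $\b{n}>0$. Your last fallback would not suffice anyway: a finitely generated subgroup that is not finitely presented can still be of type $\FP_\infty(\Q)$ (Bestvina--Brady groups are of type $\FP$ without being finitely presented).

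The missing idea is to use characters whose kernels are \emph{not} finitely generated, and to apply the hypothesis to finitely generated subgroups of those kernels. The paper's \cref{thm: nov vanish} shows the following: if every finitely generated subgroup of $G$ is of type $\FP_{n+1}(R)$, then $\H_i(G;\widehat{R[G]}^\chi)=0$ for $1<i\leqslant n$ and \emph{every} $\chi\colon G\to\Z$.
\begin{itemize}
\item Put $N=\ker\chi$. The exact sequence $0\to R[G]\to\widehat{R[G]}^\chi\oplus\widehat{R[G]}^{-\chi}\to\prod_\Z R[N]\to 0$ reduces the claim to $\H_q(N;\prod R[N])=0$ for $0<q\leqslant n$.
\item This holds because $I_{R[N]}$ is a direct limit of modules induced from augmentation ideals of finitely generated $N_j\leqslant N$. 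Each of these is of type $\FP_n$, and such modules have vanishing $\Tor$ against products (\cref{lem: tor of products}).
\end{itemize}
This is then fed into the Okun--Schreve form of Kielak's theorem (\cref{thm: L2 Novikov}). For finitely generated RFRS $G$, it produces a finite-index $H$ and a character $\chi$ with $\H_i(H;\widehat{\Q[H]}^\chi)\cong(\widehat{\Q[H]}^\chi)^{[G:H]\b{i}(G)}$ for \emph{all} $i\leqslant n$ simultaneously, with no vanishing assumed in degree $1$. When $\b{1}(G)>0$ this $\chi$ necessarily has infinitely generated kernel, which is exactly why the vanishing result must cover such characters.

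Two smaller points:
\begin{itemize}
\item A non-finitely-generated $G$ is not itself a witness. The reduction to finitely generated $G$ needs the direct-limit identity $\H_n(G;\mathcal D_{\Q[G]})\cong\varinjlim_\lambda\H_n(G_\lambda;\mathcal D_{\Q[G_\lambda]})\otimes_{\mathcal D_{\Q[G_\lambda]}}\mathcal D_{\Q[G]}$ for $n\geqslant 2$, as in the paper.
\item For the ``in particular'', you need that coherence of $\Q[G]$ forces \emph{every} finitely generated $H\leqslant G$ to be of type $\FP_\infty(\Q)$, not only those already of type $\FP_2(\Q)$. This holds because $\Q[H]$ is then coherent and $I_{\Q[H]}$ is finitely generated.
\end{itemize}
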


As a consequence of this result and a result of Linton from the appendix (see \cref{lem:type_F} and \cref{rem:v_special}), we obtain the following structure result for coherent special groups. We strongly believe the converse of the following also holds (see \cref{conj: LI coherence}).

\begin{cor}[{\cref{cor: coherent special}}] \label{cor: intro coherent special}
    Let $G$ be a finitely generated virtually special group. If $G$ is coherent, then $G$ is an iterated extension of a free group by cyclic or finite groups.
\end{cor}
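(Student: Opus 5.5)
We argue by induction on the virtual cohomological dimension of $G$, combining \cref{thm: B} with the fibring theory of Kielak for RFRS groups and Linton's result from the appendix (\cref{lem:type_F}). Coherence passes to finite-index subgroups, and an iterated extension of the required form for a finite-index subgroup yields one for $G$: we pass to the normal core and append a final finite quotient. So we may replace $G$ by a finite-index special, hence RFRS, subgroup. Such a group is torsion-free. Since $G$ is coherent and virtually special, Linton's lemma (see \cref{rem:v_special}) should give that $G$ is of type $\mathsf F$, and in particular of type $\FP_\infty(\Q)$ with finite $\cd_\Q(G)=n$. Moreover every finitely generated subgroup of $G$ is coherent and, being a subgroup of a virtually special group, is again virtually special. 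So the same conclusion holds for all finitely generated subgroups, and all of them are of type $\FP_\infty(\Q)$.

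\textbf{Step 1: vanishing of $L^2$-Betti numbers.} By \cref{thm: B} (contrapositive), $\b{k}(G)=0$ for all $k>1$. If $G$ is trivial or infinite cyclic we are done. If $\b{1}(G)>0$, I would instead try to show that $G$ is itself virtually free. The argument would use Linton's appendix or the characterisation of coherent groups with positive first $L^2$-Betti number via $\cd$ considerations. Concretely, when $n\ge 2$, the Euler characteristic $\chi(G)=\sum(-1)^k\b{k}(G)=-\b{1}(G)<0$ is a constraint to exploit, and the aim is to show that then $n\le 1$, so $G$ is free by Stallings–Swan. This is where I am least sure of the exact mechanism. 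A cleaner route is to apply \cref{thm: coherence and L2 Betti} in its stronger form, which presumably handles $\b{1}$ for groups containing $\Z$-subgroups or high-dimensional pieces. Otherwise all $\b{k}(G)=0$.

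\textbf{Step 2: fibring.} Since $G$ is RFRS, of type $\FP_\infty(\Q)$, and has vanishing $L^2$-Betti numbers in all degrees, Kielak's theorem (and its higher version by Fisher) gives a finite-index subgroup $H\le G$ and an epimorphism $\varphi\colon H\to\Z$ whose kernel $K$ is of type $\FP_\infty(\Q)$. Then $\cd_\Q(K)=n-1$, for instance by Bieri's result on normal subgroups with quotient $\Z$ when the kernel is of type $\FP$. The group $K$ is a finitely generated subgroup of $G$, hence coherent and virtually special. By induction, $K$ is an iterated extension of a free group by cyclic or finite groups. Hence $H=K\rtimes\Z$, and therefore $G$, is such an iterated extension. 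The base case $n\le 1$ gives a free group.

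\textbf{Main obstacle.} The main obstacle is Step 1 in the case $\b{1}(G)\neq 0$ with $n\ge 2$. Here fibring is impossible, and one must show directly that coherence forces $G$ to be free. The example $F_2\times\Z$ is instructive: there $\b{1}=0$, while $F_2\times F_2$ is incoherent. This suggests the argument should find a finitely generated subgroup with some positive higher $L^2$-Betti number, for example a product-like subgroup, and then apply \cref{thm: B} to that subgroup.
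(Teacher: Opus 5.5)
There is a genuine gap, and it is more than a missing mechanism: the claim Step~1 aims for is false. Take $G=\Z*\Z^2=\langle a\rangle*\langle b,c\rangle$, the right-angled Artin group on one edge plus an isolated vertex.
\begin{itemize}
\item It is finitely generated and special.
\item It is coherent, being a free product of coherent groups (by Kurosh).
\item We have $\cd G=2$, $\b{2}(G)=\b{2}(\Z)+\b{2}(\Z^2)=0$, and the Salvetti complex gives $\chi(G)=1-3+1=-1$, so $\b{1}(G)=1$.
\item It is not virtually free, since it contains $\Z^2$.
\end{itemize}
Finite-index subgroups multiply $\b{1}$ by the index. A group with a finitely generated normal subgroup and quotient $\Z$ has vanishing first $L^2$-Betti number. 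So no finite-index subgroup of $G$ fibres with finitely generated kernel, and your induction through Kielak/Fisher fibrings with kernels of type $\FP_\infty(\Q)$ cannot even start. Your fallback, finding a finitely generated subgroup with a positive higher $L^2$-Betti number, is ruled out by your own first paragraph together with \cref{thm: B}. The statement holds for this $G$ only because the free group at the bottom of the series may be infinitely generated. The map to $\Z$ given by $c\mapsto 1$, $a,b\mapsto 0$ has, by Kurosh, an infinitely generated free kernel.

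The missing idea is to ignore $b^{(2)}_1$ and fibre using only the top-degree $L^2$-Betti number. One must accept kernels that are not finitely generated and only have cohomological dimension one less. In dimension two this is \cite[Corollary~3.9]{Fisher_freebyZ}: $b^{(2)}_2=0$ alone gives virtually free-by-cyclic. These kernels fall outside any induction over finitely generated coherent groups, so one needs vanishing of the higher $L^2$-Betti numbers for \emph{all} subgroups. This comes from \cref{thm: coherence and L2 Betti}\ref{item: all subgps FPinf}: its hypothesis passes to arbitrary subgroups, and its proof handles non-finitely generated groups through direct limits. The iteration is packaged as \cref{thm: coherence and L2 Betti}\ref{item: finite cd subnormal}, via \cite[Theorem~5.7]{Fisher_NovikovCohomology}.

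The paper's proof is then short. Linton's appendix, exactly as in your first paragraph, shows that a finite-index special subgroup and all its finitely generated subgroups are of type $\FP(\Q)$. So \cref{thm: coherence and L2 Betti}\ref{item: finite cd subnormal} applies, and your normal-core argument transfers the series to $G$. You gestured at \cref{thm: coherence and L2 Betti}, but item \ref{item: finite cd subnormal} does not ``handle'' $b^{(2)}_1$; it sidesteps it.
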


In the appendix, Marco Linton uses \cref{thm: B} to prove the following result, which puts a strong restriction on the possible virtual cohomological dimensions of cubulated hyperbolic groups satisfying strong forms of coherence. This should be compared with Wise's prediction that coherence is a low-dimensional phenomenon among hyperbolic groups.

\begin{thm}[\cref{main,main_cor}]\label{thm: locally quasi convex}
    Let $G$ be a cubulated hyperbolic group.
    \begin{enumerate}
        \item If $G$ is locally hyperbolic, then $G$ is virtually an extension of a free product of free and surface groups by $\Z$; in particular, the virtual cohomological dimension of $G$ is at most $3$.
        \item If $G$ is locally quasi-convex, then $G$ is virtually free-by-cyclic; in particular, the virtual cohomological dimension of $G$ is at most $2$.
    \end{enumerate}
\end{thm}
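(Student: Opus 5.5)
We argue by passing to a finite-index subgroup that is special (cubulated hyperbolic groups are virtually compact special by Agol), hence RFRS, and then use \cref{thm: B} to force the vanishing of all $L^2$-Betti numbers in degrees $n>1$. The hypotheses pass to finite-index subgroups: local hyperbolicity trivially, and local quasi-convexity because finitely generated subgroups of a finite-index subgroup are quasi-convex in $G$. So we may assume $G$ is torsion-free, compact special and hyperbolic.

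\textbf{Step 1: reduction to $\FP_\infty$-type coherence.} In the locally hyperbolic case, every finitely generated subgroup $H$ is hyperbolic, hence finitely presented and of type $F_\infty$, in particular $\FP_\infty(\Q)$. In the locally quasi-convex case, each finitely generated subgroup is quasi-convex, hence hyperbolic, and the same conclusion holds. Since $G$ is RFRS, the contrapositive of \cref{thm: B} gives $\b{n}(G)=0$ for all $n\geqslant 2$.

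\textbf{Step 2: fibring.} Next I would apply a Kielak-type virtual fibring criterion in the form suited to higher degrees (Fisher's generalisation): a virtually RFRS group of type $\FP_n(\Q)$ whose $L^2$-Betti numbers vanish in degrees at most $n$ virtually maps onto $\Z$ with kernel of type $\FP_n(\Q)$. The obstacle is $\b{1}(G)$, which need not vanish. I would handle it as follows. If $\b{1}(G)>0$, then, $G$ being hyperbolic, Linton's structural result (\cref{lem:type_F}) or a JSJ/graph-of-groups argument should split $G$ virtually as a free product or over a finite/cyclic group. One then inducts on the factors using accessibility, which holds for hyperbolic groups. Alternatively, one fibres only in the degrees above one, using a Novikov-ring version of the criterion: the vanishing of $\b{n}$ for $n\geqslant 2$ gives a virtual character whose kernel $N$ has $H_n(N;\Q)$ finite-dimensional appropriately, for $n\geqslant 2$.

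\textbf{Step 3: identifying the kernel.} Suppose we have $1\to N\to G\to\Z\to 1$ with $N$ finitely generated. Then $N$ is hyperbolic under either hypothesis.
\begin{enumerate}
\item In the locally quasi-convex case, $N$ is an infinite normal subgroup of infinite index. Since quasi-convex normal subgroups of infinite index in a hyperbolic group are finite, $N$ cannot be quasi-convex. Hence the only possibility is that $G$ splits virtually as a graph of groups with finite edge groups and free-by-cyclic vertex groups, or that $N$ is free. Here I would use $\cd$ considerations: $\cd N=\cd G-1$, together with iterating the argument on $N$ (which again satisfies the hypotheses). This forces $N$ to be free, so $G$ is free-by-cyclic.
\item In the locally hyperbolic case, iterating the fibring on $N$ and using that a hyperbolic normal subgroup of a hyperbolic group that is itself a fibre must have $\cd\leqslant 2$ (the Bestvina--Feighn/Brady-type constraint), we get that $N$ is a hyperbolic group of dimension at most $2$ with vanishing $\b{2}$. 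Since $N$ is coherent and of $\cd$ at most $2$, it is virtually free-by-cyclic, or else it is a free product of free and surface groups.
\end{enumerate}

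The main obstacle is the presence of $\b{1}(G)>0$. I expect to handle it by first decomposing $G$ along finite and cyclic splittings, which is possible by accessibility, and then fibring the pieces.
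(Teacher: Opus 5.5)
Your Step 1 matches the paper: after passing to a compact special finite-index subgroup, \cref{thm: B} gives $\b{n}(G)=0$ for all $n\geqslant 2$. The genuine gap is Step 2, the treatment of $\b{1}(G)$, and neither of your proposed remedies works.

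First, $\b{1}(G)>0$ does not yield a virtual splitting over finite or cyclic subgroups. For instance, the fundamental group of a compact hyperbolic $3$-manifold with non-empty totally geodesic boundary is cubulated and locally quasi-convex. It has Sierpi\'nski carpet boundary, so it admits no splitting over finite or two-ended subgroups, yet $\b{1}=-\chi>0$. Nor is \cref{lem:type_F} a splitting result: it takes a tree action as input.

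Second, and more fundamentally, if $\b{1}(G)>0$ then no finite-index subgroup of $G$ maps onto $\Z$ with finitely generated kernel, by L\"uck's vanishing theorem for such extensions. So the situation assumed at the start of Step 3 never arises. In the locally quasi-convex case it never arises for non-elementary $G$ at all, because, as you observe yourself, a finitely generated infinite normal subgroup of infinite index is not quasi-convex. That observation contradicts the hypothesis; it does not force $N$ to be free. The kernels in the theorem are in general infinitely generated (as in the example above), and your argument has no mechanism that produces them. The Novikov-ring variant you sketch gives no control in degree $1$, which is exactly what Step 3 uses.

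Also, in Step 3(2), the claim that a hyperbolic fibre of a hyperbolic group has $\cd\leqslant 2$ is not a classical Bestvina--Feighn/Brady fact. It is the recent theorem of Dahmani--Krishna--Mutanguha that such a fibre is virtually a free product of free and surface groups. Your final dichotomy for $N$ does not establish the alternative you need.

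The missing idea is to kill $\b{1}$ by enlarging $G$ rather than decomposing it. The paper proceeds as follows.
\begin{enumerate}
\item Using $\b{i}(G)=0$ for $i\geqslant 2$, Kielak--Linton (\cref{embedding}) embed a finite-index $H\leqslant G$ as the vertex group of an acylindrical HNN-extension $L=H*_\psi$ over a quasi-convex free subgroup. Here $L$ is hyperbolic, virtually compact special, and $\b{i}(L)=0$ for all $i$.
\item \cref{fibring} gives a finite-index $N\rtimes\Z\leqslant L$ with $N$ of type $\FP_2(\Q)$.
\item \cref{local_hyp} shows $N$ is hyperbolic. It combines Jaikin-Zapirain--Linton's cocompact dominating tree, acylindricity, and the Bestvina--Feighn combination theorem, using that $H$ is locally hyperbolic.
\item Dahmani--Krishna--Mutanguha then identify $N$ virtually as a free product of free and surface groups.
\item Intersecting back with $G$ gives part (1). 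The kernel may be infinitely generated, but it is still a free product of free and surface groups by Kurosh.
\end{enumerate}
Part (2) is deduced from (1), not by iterating fibrings. Local quasi-convexity excludes surface-by-$\Z$ subgroups, so no power of the monodromy preserves the conjugacy class of a surface factor. A relative cohomological dimension argument then gives $\cd_\Q\leqslant 2$ for the finite-index subgroup. Finally, $\b{2}=0$ together with Kielak--Linton gives virtual free-by-cyclicity.
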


There has been much recent progress in establishing the implication \ref{item: conj B2 = 0} $\Rightarrow$ \ref{item: conj coh} of \cref{conj: intro}. Jaikin-Zapirain and Linton proved that if $G$ is a locally indicable group of cohomological dimension $2$ and $\b{2}(G) = 0$, then $G$ is homologically coherent \cite[Theorem 1.2]{JaikinLinton_coherence}. In many cases, Jaikin-Zapirain--Linton are able to upgrade homological coherence to full coherence; in particular, they prove that one-relator groups are coherent, confirming a well known conjecture of Baumslag \cite[Theorem 1.1]{JaikinLinton_coherence}. More evidence for the \ref{item: conj B2 = 0} $\Rightarrow$ \ref{item: conj coh} direction of \cref{conj: intro} is provided by Kielak and Linton \cite[Theorem 1.1]{KielakLinton_FbyZ}, who proved that if $G$ is Gromov hyperbolic, virtually compact special, and $\cd_\Q(G) \leqslant 2$, then $G$ is virtually free-by-cyclic if and only if $\b{2}(G) = 0$. Free-by-cyclic groups are coherent by a result of Feighn--Handel \cite{FeighnHandel_FreeByZCoherent}, so the result of Kielak--Linton establishes a strong form of coherence for these groups. The first author obtained the same conclusion under the weaker assumption that $G$ be finitely generated, virtually RFRS, and of rational cohomological dimension at most $2$ \cite[Theorem A]{Fisher_freebyZ}.

Combined with these results, \cref{thm: A} yields the following characterisations of coherence among finitely generated virtually RFRS groups of rational cohomological dimension at most $2$. One equivalence we want to highlight is that such a group is coherent if and only if it is virtually free-by-cyclic (where the free kernel is not assumed to be finitely generated). Thus, our result provides a virtual converse to the Feighn--Handel theorem in the class of virtually RFRS groups.

\begin{cor}[\cref{cor: equivalences}]\label{cor: C}
    Let $G$ be a finitely generated virtually RFRS group with $\cd_\Q(G) \leqslant 2$. The following are equivalent:
    \begin{enumerate}
        \item\label{item: coherence intro} $G$ is coherent;
        \item\label{item: gp alg coh intro} $\Q[G]$ is coherent;
        \item\label{item: hom coherence intro} $G$ is homologically coherent over $\Q$;
        \item\label{item: H2 fg intro} $b_2(H;\Q)$ is finite for every finitely generated subgroup $H \leqslant G$;
        \item\label{item: second L2 fg all subgps intro} $\b{2}(H)$ is finite for every finitely generated subgroup $H \leqslant G$;
        \item\label{item: second L2 zero intro} $\b{2}(G) = 0$;
        \item\label{item: v free by Z intro} $G$ is virtually free-by-cyclic;
        \item\label{item: UG coherent} $\mathcal U(G)$ is a coherent $\Q[G]$-module;
        \item\label{item: weak dim 1 intro} $\mathcal U(G)$ is of weak dimension at most one as a $\Q[G]$-module;
        \item\label{item: NPI intro} $G$ has a subgroup of finite index isomorphic to $\pi_1(X)$ where $X$ is a $2$-complex with nonpositive immersions.
    \end{enumerate}
\end{cor}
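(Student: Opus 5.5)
The equivalences will be organised around the pivot condition \ref{item: second L2 zero intro}, $\b{2}(G)=0$. Two implications link it to virtual free-by-cyclicity, and the remaining conditions will be placed in a cycle of implications through these two. Since every condition is invariant under passing to finite-index subgroups (coherence, homological coherence and $\Q[G]$-coherence pass to and from finite-index subgroups; $\b{2}$ scales by the index; free-by-cyclic and nonpositive immersions are stated virtually), we will assume throughout that $G$ itself is RFRS, finitely generated, and has $\cd_\Q(G)\leqslant 2$. With this reduction, \ref{item: second L2 zero intro} $\Rightarrow$ \ref{item: v free by Z intro} is exactly the first author's theorem \cite[Theorem A]{Fisher_freebyZ}.

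Next, \ref{item: v free by Z intro} $\Rightarrow$ \ref{item: coherence intro} follows from Feighn--Handel \cite{FeighnHandel_FreeByZCoherent}, together with the fact that coherence passes to finite-index overgroups. One caveat: Feighn--Handel is stated for finitely generated free kernels. If $G$ is finitely generated and virtually $F\rtimes\Z$ with $F$ arbitrary, I would show that it is virtually free-by-cyclic with finitely generated kernel, or else appeal to the general Feighn--Handel statement for mapping tori of free group endomorphisms and ascending HNN extensions. The implications \ref{item: coherence intro} $\Rightarrow$ \ref{item: hom coherence intro} $\Rightarrow$ \ref{item: H2 fg intro} are standard: $\FP_2(\Q)$ for $H$ gives $\dim_\Q \H_2(H;\Q)<\infty$. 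For \ref{item: hom coherence intro} $\Rightarrow$ \ref{item: second L2 fg all subgps intro}, $\FP_2(\Q)$ gives a partial resolution that is finitely generated in degrees $\leqslant 2$, so $\b{2}(H)<\infty$. The step \ref{item: hom coherence intro} $\Rightarrow$ \ref{item: second L2 zero intro} is \cref{thm: A}.

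The key remaining step is to get \ref{item: H2 fg intro} and \ref{item: second L2 fg all subgps intro} back to \ref{item: second L2 zero intro}. I expect this to be the main obstacle, and I plan to reuse the argument behind \cref{thm: A}. Suppose $\b{2}(G)>0$. Using RFRS, pass to a finite-index subgroup with a character $\varphi$ whose kernel is finitely generated but fails $\FP_2$, or more precisely a finitely generated subgroup $H$ built via the Novikov-ring/Linnell-skew-field machinery. For this $H$ I would show that $\H_2(H;\Q)$ is infinite-dimensional, using that $\cd\leqslant 2$ makes $\H_2$ a submodule of a free module and that non-vanishing $\b{2}(G)$ survives. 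Similarly $\b{2}(H)=\infty$, since for a kernel of a character the $L^2$-Betti number over $H$ equals $\b{2}(G)$ times an infinite index factor. I expect the paper's proof of \cref{thm: A} to produce exactly such subgroups, so these implications should follow from the same construction.

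For the ring-theoretic items, \ref{item: gp alg coh intro} $\Rightarrow$ \ref{item: second L2 zero intro} comes from \cref{thm: B}-style reasoning at $n=2$. Conversely, \ref{item: second L2 zero intro} $\Rightarrow$ \ref{item: gp alg coh intro}, \ref{item: UG coherent} and \ref{item: weak dim 1 intro} follow from Jaikin-Zapirain--Linton \cite{JaikinLinton_coherence}. There, $\b{2}=0$ together with local indicability and $\cd 2$ gives that $\mathcal U(G)$ has weak dimension $\leqslant 1$ and that $\Q[G]$ is coherent. Moreover, weak dimension at most one forces $\Tor_2^{\Q[G]}(\mathcal U(G),\Q)=0$, hence $\b{2}(G)=0$, which closes the loop with \ref{item: weak dim 1 intro}. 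Finally, \ref{item: NPI intro} $\Rightarrow$ \ref{item: coherence intro} is not known in general, so I would instead argue \ref{item: v free by Z intro} $\Rightarrow$ \ref{item: NPI intro}, taking the mapping torus of a graph (which has nonpositive immersions by Wise), and \ref{item: NPI intro} $\Rightarrow$ \ref{item: second L2 zero intro}, using that such $X$ are aspherical with $\b{2}=0$ by Jaikin-Zapirain--Linton.
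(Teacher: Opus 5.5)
The main loop, (3) $\Rightarrow$ (6) by \cref{thm: A}, (6) $\Rightarrow$ (7) by \cite{Fisher_freebyZ}, and (7) $\Rightarrow$ (1) by Feighn--Handel, matches the paper. However, the cycle does not close at items (4) and (5). You derive them only from (3), and your route back relies on a construction that is not in the paper and does not work in general.

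\begin{itemize}
\item The proof of \cref{thm: A} is deliberately non-constructive. In \cref{thm: nov vanish}, $I_{R[N]}$ is written as a direct limit over \emph{all} finitely generated subgroups $N_j \leqslant \ker\chi$. The contrapositive therefore only says that some $N_j$ fails to be of type $\FP_2(\Q)$. It gives no information about $\H_2(N_j;\Q)$ or $\b{2}(N_j)$, and the paper stresses that no explicit witness to incoherence is produced.
\item A virtual character with finitely generated kernel need not exist. By \cite{KielakRFRS}, one exists only when $\b{1}(G)=0$. For example, the right-angled Artin group $(F_2\times F_2)*\Z$ has $\b{1}=\b{2}=1$, so no finite-index subgroup of it admits such a character.
\item There is no principle that $\b{2}$ of a normal subgroup with quotient $\Z$ equals $\b{2}(G)$ times an infinite index. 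The relevant fact is L\"uck's theorem: if $\b{p}(\ker\chi)<\infty$ for $p\leqslant 2$, then $\b{p}(G)=0$ for $p\leqslant 2$. This still requires a finitely generated kernel.
\end{itemize}

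The missing ingredient is to argue in the opposite direction. The implication (4) $\Rightarrow$ (5) is \cite[Lemma 6.11]{FisherKlinge_RPVN}. The implication (5) $\Rightarrow$ (3) is \cite[Corollary 3.5]{JaikinLinton_coherence}: for locally indicable groups of cohomological dimension at most two, finiteness of $\b{2}$ on finitely generated subgroups already gives homological coherence. Then \cref{thm: A} yields (6).

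There are also smaller loose ends.
\begin{itemize}
\item Item (8) only ever appears as a conclusion. You need to add (8) $\Rightarrow$ (2), which is immediate: a finitely generated left ideal of $\Q[G]$ is a finitely generated $\Q[G]$-submodule of $\mathcal U(G)$.
\item You attribute (6) $\Rightarrow$ (2), (8), (9) to \cite{JaikinLinton_coherence} for arbitrary locally indicable two-dimensional groups. The paper does not rely on this, and you should not take it for granted: the paper presents several equivalences of \cref{cor: C} as answering questions and conjectures of \cite{JaikinLinton_coherence}, in the virtually RFRS case only.
\item Since you already have (6) $\Rightarrow$ (7), use the paper's route instead. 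It obtains (7) $\Rightarrow$ (8), (9), (10) from results specific to free-by-cyclic groups: \cite[Lemma 3.4]{Fisher_freebyZ}, \cite[Theorem 6.1]{Wise_JussieuCoherenceNPI} and \cite[Proposition 3.9]{HennekeLopez_PseudoSyl}. Then (8) $\Rightarrow$ (2) as above, and (9) $\Rightarrow$ (6) as you say.
\item For (10), the paper does not argue via asphericity of $X$. It gets (10) $\Rightarrow$ (3) from \cite[Theorem 1.2]{JaikinLinton_coherence}, using that homological coherence passes to finite-index overgroups, and then applies \cref{thm: A}.
\end{itemize}
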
 

As mentioned above, these equivalences settle \cref{conj: intro} in the class of virtually RFRS groups, with the slight hiccup that we can only conclude that a finite-index subgroup of $G$ has a classifying space with nonpositive immersions in item \ref{item: NPI intro}. They also give positive resolutions to \cite[Question 2]{JaikinLinton_coherence} (with the same virtual caveat), and Questions 4 and 5 and Conjectures 3 and 6 from \cite{JaikinLinton_coherence}, in the class of virtually RFRS groups.

We comment on some of the definitions and context of the items appearing in \cref{cor: C}. A module is \emph{coherent} if all its finitely generated submodules are finitely presented; this generalises ring coherence, because a ring is left coherent if and only if it is coherent as a left module over itself. The question of whether group algebra coherence is equivalent to group coherence is an interesting open problem. Property \ref{item: H2 fg intro} has appeared explicitly in \cite[Corollary 1.6]{LouderWilton_Wcycles}, where it was shown that one-relator groups possess it (this is also a consequence of \cite{HelferWise_oneRelNPI}). The object $\mathcal U(G)$ appearing in items \ref{item: UG coherent} and \ref{item: weak dim 1 intro} is the algebra of operators affiliated to the group von Neumann algebra of $G$; its definition will be recalled in \cref{subsec: L2}. We also confirm the two-dimensional case of \cite[Conjecture 1.4]{Fisher_NovikovCohomology}, which predicts the equivalence of items \ref{item: coherence intro}, \ref{item: gp alg coh intro}, and \ref{item: weak dim 1 intro} for all RFRS groups of type $\FP$ (for arbitrary RFRS groups of type $\FP(\Q)$, we will show that coherence of $\Q[G]$ implies the weak dimension of $\mathcal U(G)$ is at most one, resolving one of the implications of \cite[Conjecture 1.4]{Fisher_NovikovCohomology} in general).

%%% APPLICATIONS
\subsection{Applications} \label{subsec: applic_intro}

As a corollary of \cref{thm: A}, we obtain a complete classification of coherent two-dimensional Coxeter groups, confirming Conjectures 3.4 and 4.7 and solving Problem 4.6 posed by Jankiewicz and Wise in \cite{JankiewiczWise_Coxeter}. Recall that a (finitely generated) \emph{Coxeter group} is a group with a presentation
\[
    \pres{v_1, \dots, v_n}{(v_iv_j)^{m_{ij}} : 1 \leqslant i \leqslant j \leqslant n},
\]
where $m_{ij} \in \{1, 2, 3, \dots, \infty\}$, $m_{ij} = 1$ if and only if $i = j$, and $(v_iv_j)^\infty$ means that there is no imposed relation between the generators $v_i$ and $v_j$. A \emph{Coxeter subgroup} is a subgroup generated by some subset of the generators $v_i$; these are Coxeter groups in their own right. The Coxeter group is \emph{two-dimensional} if and only if every subgroup of the form $\langle v_i, v_j, v_k \rangle$ is infinite for pairwise distinct integers $i$, $j$, $k$ (equivalently, that $\frac{1}{m_{ij}} + \frac{1}{m_{jk}} + \frac{1}{m_{ki}} \leqslant 1$ for all pairwise distinct $i$, $j$, $k$). The terminology is justified by the fact that a two-dimensional Coxeter group admits a finite-index subgroup with a nice two-dimensional classifying space. The \emph{Euler characteristic} of a finitely generated two-dimensional Coxeter group $G$ is 
\[
    \chi(G) \ = \ 1 - \frac{n}{2} + \sum_{1 \leqslant i < j \leqslant n} \frac{1}{2m_{ij}}.
\]
As mentioned above, Coxeter groups are virtually special, so they fall under the purview of our main result. The following result gives a very simple algorithm to check whether or not a two-dimensional Coxeter group is coherent, since there are only finitely many Coxeter subgroups.

\begin{thm}[\cref{thm: Coxeter}]\label{thm: F}
    Let $G$ be a two-dimensional Coxeter group. Then $G$ is coherent if and only if $\chi(H) \leqslant 0$ for every infinite Coxeter subgroup $H \leqslant G$.
\end{thm}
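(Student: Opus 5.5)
We will deduce \cref{thm: F} from \cref{cor: C}. A two-dimensional Coxeter group $G$ is finitely generated and virtually special, hence virtually RFRS, and it has $\cd_\Q(G) \leqslant 2$ (it is virtually torsion-free with a finite-index subgroup acting freely and cocompactly on the two-dimensional Davis complex). So by \cref{cor: C}, $G$ is coherent if and only if $\b{2}(G) = 0$. The same holds for every Coxeter subgroup $H \leqslant G$, which is again two-dimensional. The proof therefore reduces to two claims: (a) for an infinite two-dimensional Coxeter group $H$ we have $\chi(H) \leqslant 0$ whenever $\b{2}(H)=0$; (b) if $\chi(H) \leqslant 0$ for every infinite Coxeter subgroup $H$, then $\b{2}(G)=0$.

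For (a), and for the forward direction of the theorem, I would use the Euler characteristic formula for $L^2$-Betti numbers on the proper cocompact action on the Davis complex $\Sigma$:
\[
\chi(H)=\b{0}(H)-\b{1}(H)+\b{2}(H).
\]
Here $\chi(H)$ is exactly the orbifold Euler characteristic written in the introduction, computed cell-by-cell with cell stabilisers of orders $1$, $2$, and $2m_{ij}$. For infinite $H$ we have $\b{0}(H)=0$, so $\chi(H) = \b{2}(H)-\b{1}(H) \leqslant \b{2}(H)$. Hence $\chi(H)>0$ forces $\b{2}(H)>0$, which makes $H$ incoherent by \cref{cor: C}. Since $H \leqslant G$ and a subgroup of a coherent group is coherent, $G$ is then incoherent. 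This gives the direction: coherent implies $\chi(H) \leqslant 0$ for all infinite Coxeter subgroups.

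For the converse, (b), I must show that $\b{2}(G)=0$ when every infinite Coxeter subgroup has nonpositive Euler characteristic. I would argue by induction on the number $n$ of generators. Since $\Sigma$ is two-dimensional, $\H^{(2)}_2(\Sigma)$ consists of $L^2$ cycles, with no boundaries to quotient by. The plan is to show that no nonzero $L^2$ $2$-cycle exists. The natural tool is a Mayer--Vietoris (or amalgam) decomposition. If the defining graph has a missing edge (some $m_{ij}=\infty$) or a separating clique, then $G$ splits as an amalgam of special subgroups over a special subgroup of lower rank. The Mayer--Vietoris sequence in $L^2$-homology gives
\[
\b{2}(G) \leqslant \b{2}(A)+\b{2}(B)+\b{1}(C) \quad \text{for } G = A *_C B .
\]
By induction $\b{2}(A)=\b{2}(B)=0$. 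The edge group $C$ is either finite or a Coxeter group of lower dimension with vanishing $\b{1}$ in the relevant cases, for instance triangle or edge groups. In the indecomposable case, where all $m_{ij}<\infty$, I would instead invoke the known computation of $L^2$-homology of two-dimensional Coxeter groups (Davis--Okun): the $L^2$ homology is concentrated in the expected degree. That gives $\b{2}(G) = \max(0,\chi(G))$ up to correction terms from subgroups, and $\chi(G) \leqslant 0$ then forces $\b{2}(G) = 0$.

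The main obstacle is this converse direction, specifically getting a clean statement in the indecomposable case and controlling $\b{1}$ of the edge groups in the splitting. An alternative I would try first is to bypass $L^2$ entirely: show directly that the hypothesis yields a finite-index subgroup which is the fundamental group of a $2$-complex with nonpositive immersions, or which is coherent by Jankiewicz--Wise style arguments. Either conclusion gives coherence by \cref{cor: C}.
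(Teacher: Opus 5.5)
Your forward direction is correct and is the paper's argument. Coherence passes to the Coxeter subgroup $H$, \cref{cor: C} then gives $\b{2}(H)=0$, and hence $\chi(H)=-\b{1}(H)\leqslant 0$; you state the contrapositive. Reducing the converse to showing $\b{2}(G)=0$ and then applying \cref{cor: C} is also logically fine. The gap is that you never actually establish this vanishing.

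Your Mayer--Vietoris induction fails at the edge group. When $m_{ij}=\infty$, the edge group is $C=\langle v_k : k\neq i,j\rangle$, which is an arbitrary Coxeter group of rank $n-2$. After applying induction to the vertex groups $A$ and $B$, the sequence only gives $\b{2}(G)\leqslant\b{1}(C)$. To conclude, you would need $H_1^{(2)}(C)\to H_1^{(2)}(A)\oplus H_1^{(2)}(B)$ to be injective.

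Your claim that $\b{1}(C)$ vanishes for triangle groups is false. A hyperbolic triangle group is virtually a closed surface group, so it has $\b{1}=-\chi>0$. Here is a concrete instance:
\begin{itemize}
\item Take generators $a,b,c,d,e$ with $m_{de}=\infty$ and all other $m_{ij}=4$.
\item This group is two-dimensional, and every infinite Coxeter subgroup has $\chi\in\{0,-\tfrac18,-\tfrac14,-\tfrac38\}$, so the hypothesis of the theorem holds.
\item The splitting $\langle a,b,c,d\rangle *_{\langle a,b,c\rangle}\langle a,b,c,e\rangle$ has edge group $\Delta(4,4,4)$, with $\b{1}=\tfrac18$.
\end{itemize}
So the injectivity above is exactly what is needed. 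You do not address it, and it is essentially as hard as the original problem.

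The indecomposable case rests on a citation that does not exist in the form you need. Davis--Okun's vanishing theorems concern right-angled Coxeter groups in Singer-conjecture settings. There is no theorem saying that the $L^2$-homology of a general two-dimensional Coxeter group is ``concentrated in the expected degree''; in fact it can be nonzero in degrees $1$ and $2$ at the same time. Nor is there a formula $\b{2}(G)=\max(0,\chi(G))$ ``up to correction terms from subgroups''. Controlling those correction terms is precisely the content of the converse.

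The paper does no $L^2$ computation here. It obtains the converse by citing Jaikin-Zapirain--Linton \cite[Theorem 1.4, Proposition 6.2]{JaikinLinton_coherence}: a two-dimensional Coxeter group all of whose infinite Coxeter subgroups have $\chi\leqslant 0$ is coherent. Your fallback idea, producing a nonpositive-immersions or nonpositive-sectional-curvature structure in the style of Jankiewicz--Wise, points toward that input. As written, though, it is only a plan. To complete the proof you must invoke that theorem or reprove it; \cref{cor: C} plus an unspecified $L^2$ computation is not enough.
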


One direction of the characterisation is provided by \cite[Theorem 1.4]{JaikinLinton_coherence}. We use \cref{thm: A} to prove the converse, which states that a coherent two-dimensional Coxeter group must have $\b{2}(G) = 0$, and therefore that $\chi(H) \leqslant 0$ for all infinite finitely generated Coxeter subgroups. In \cref{thm: conj_Kasia_Dani}, we relate coherence to curvature properties of two-dimensional Coxeter groups, which is the subject of \cite[Problem 4.6 and Conjecture 4.7]{JankiewiczWise_Coxeter}.

In a different direction, we find that incoherence is generic (in the few-relator random model of \cite{ArzhantsevaOlshanskii_fewRelRandom}) among groups with nonpositive deficiency, confirming \cite[Conjecture 17.15(1)]{Wise_anInvitation}. The case $m = 2$ of the following result was obtained by Kielak, Kropholler, and Wilkes in \cite[Proposition 2.12]{KielakKrophollerWilkes_RandomL2}. We refer the reader to \cref{subsec: random} for a more precise statement.

\begin{cor}[\cref{thm: random coherence}]
    Fix integers $n \geqslant m > 1$ and let 
    \[
        G = \pres{s_1, \dots, s_m}{r_1, \dots, r_n}
    \]
    be a group whose relators are chosen uniformly at random among all words in $\{s_1^{\pm1},\dots,s_m^{\pm1}\}$ of length at most $l$. The probability that $G$ is incoherent tends to $1$ as $l$ tends to infinity.
\end{cor}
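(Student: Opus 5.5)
The plan is to reduce incoherence of a random group to the non-vanishing of a second $L^2$-Betti number, and then apply \cref{thm: A} (or \cref{cor: C}). That requires three inputs about a generic few-relator group: it is virtually RFRS, it has $\cd_\Q \leqslant 2$, and it has $\b{2} > 0$.

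\textbf{Step 1 (structure).} In the few-relator model of Arzhantseva--Ol'shanskii, a random presentation with $n$ relators of length at most $l$ satisfies the $C'(1/6)$ small cancellation condition with probability tending to $1$ as $l \to \infty$. Generically no relator is a proper power, so the presentation complex $X$ is aspherical and $\cd(G) \leqslant 2$. By Wise's cubulation of $C'(1/6)$ groups together with Agol's theorem, $G$ is hyperbolic and virtually compact special, and hence virtually RFRS. (Wise's result covers small cancellation groups, as cited in the introduction.)

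\textbf{Step 2 ($L^2$-Betti numbers).} Since $X$ is a finite aspherical $2$-complex, the Euler characteristic gives
\[
    \b{0}(G) - \b{1}(G) + \b{2}(G) = \chi(X) = 1 - m + n .
\]
Here $G$ is infinite, so $\b{0}(G) = 0$, and $\b{2}(G) = \b{1}(G) + 1 - m + n \geqslant 1 - m + n \geqslant 1 > 0$ because $n \geqslant m$. Note that $n \geqslant m$ is exactly the condition that the deficiency $m - n$ is nonpositive, which is why the hypothesis appears.

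\textbf{Step 3 (conclusion).} By \cref{thm: A}, $G$ is not homologically coherent over $\Q$, and therefore it is incoherent. The main obstacle is Step 1: one has to make sure the cited genericity results apply in this exact model (relators of length at most $l$ rather than exactly $l$), and that $C'(1/6)$ together with "no proper powers" holds simultaneously with probability tending to $1$. If the exact-length version is the one available, I would decompose by length. Lengths below $l/2$, say, occupy a vanishing proportion of the words. Conditioned on the lengths, the $C'(\lambda)$ estimates apply uniformly, since for fixed $n$ the probability of a long common piece between relators decays exponentially in the length.
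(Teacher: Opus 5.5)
Your proposal is correct and follows essentially the same route as the paper: generically $C'(1/6)$ with no proper-power relators, hence an aspherical presentation complex, hyperbolicity and (via Wise and Agol) virtual compact specialness; then $\chi(G)=1-m+n>0$ forces $\b{2}(G)>0$, and the main theorem gives incoherence. The length-by-length decomposition you sketch as a fallback is unnecessary, since the paper applies \cite[Lemma 3]{ArzhantsevaOlshanskii_fewRelRandom} directly in this model, and your explicit observation that $G$ is infinite (so $\b{0}(G)=0$) is a point the paper leaves implicit.
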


Using the work of L\"{o}h--Uschold on computability of $L^2$-Betti numbers \cite{LohUschold_L2computability}, we obtain the following algorithmic result.

\begin{cor}[\cref{cor: coh algorithm}]
    There is an algorithm that, given a virtually compact special group $G$ with $\cd_\Q(G) \leqslant 2$, determines whether or not $G$ is coherent.
\end{cor}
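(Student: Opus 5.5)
The algorithm runs two semi-decision procedures in parallel, one certifying coherence and one certifying incoherence; the proof reduces to the equivalences of \cref{cor: C}. A virtually compact special group is finitely presented and virtually RFRS, since special groups embed in right-angled Artin groups, which are RFRS. So for $G$ with $\cd_\Q(G) \leqslant 2$, \cref{cor: C} says that $G$ is coherent if and only if $\b{2}(G) = 0$, if and only if $G$ is virtually free-by-cyclic. It therefore suffices to decide whether $\b{2}(G) = 0$.

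\textbf{Step 1: a finite-index subgroup with a finite model.} Enumerate finite-index subgroups of $G$ via finite quotients, which is possible from a finite presentation. Look for one, $H$, given as $\pi_1$ of a compact special cube complex; by assumption such an $H$ exists, so the search terminates if the input is given as a finite presentation together with a certificate of virtual specialness. (Alternatively, we assume the input is given as a compact special cube complex $X$ with finite group data.) Since $\b{2}(G) = \b{2}(H)/[G:H]$, it suffices to work with $H$. Because $\cd_\Q(H)\le 2$, the relevant $\b{2}$ is the $L^2$-Betti number of the finite free resolution coming from $X$, even if $X$ itself has higher-dimensional cells.

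\textbf{Step 2: deciding vanishing of $\b{2}(H)$.} Here I invoke L\"oh--Uschold. For groups satisfying the Atiyah conjecture with controlled denominators (RFRS groups satisfy the strong Atiyah conjecture, by work of Jaikin-Zapirain), $L^2$-Betti numbers of finite free $\Q[H]$-complexes are computable, or at least it is decidable whether they vanish, provided the word problem is solvable. Compact special groups are linear, so the word problem is solvable. The intended result is that $\b{2}(H)$ is computable from the matrices of the differentials. Since $H$ is torsion-free and RFRS, the values lie in $\Z$, so computing $\b{2}(H)$ to within accuracy $1/2$ decides whether it is zero.

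\textbf{Step 3: the main obstacle.} The delicate point is making sure that the L\"oh--Uschold computability statement really applies to the given input class, that is, that its hypotheses (an effective presentation, solvable word problem, and the Atiyah-type integrality that upgrades approximation to exact decision) are met. If their result only gives upper semi-computability, I would complement it with a semi-decision procedure for coherence: enumerate finite-index subgroups and epimorphisms to $\Z$, and search for one witnessing that the subgroup is free-by-cyclic. By \cite{Fisher_freebyZ}, when $\b{2} = 0$ there is a fibred character on a finite-index subgroup, and this fibredness is certifiable via the BNS invariant, which is computable for special groups through Novikov homology checks. Running this procedure in parallel with the approximation from above of $\b{2}$, which halts once the approximation is below $1$ in a way certifying nonvanishing, gives the algorithm.
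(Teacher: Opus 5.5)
Your main line is the paper's argument: find a torsion-free finite-index subgroup by matching presentations, approximate $\b{2}$ via L\"oh--Uschold \cite{LohUschold_L2computability}, and use integrality from the Strong Atiyah Conjecture to decide vanishing from a sufficiently accurate approximation. But the step you flag as the main obstacle is left unresolved, and you have misidentified what resolves it. Integrality does not make L\"oh--Uschold applicable. It only tells you how much accuracy suffices, once you have approximations $q_n$ with a computable error bound $|\b{2}(G)-q_n|<2^{-n}$.

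Obtaining such two-sided approximations requires the Determinant Conjecture in addition to a solvable word problem. With a solvable word problem alone, you only get $\b{2}$ as the limit of a computable \emph{decreasing} sequence of rationals (traces of powers of a rescaled Laplacian), with no control on the rate of convergence. The missing observation is that virtually compact special groups are residually finite, hence sofic, hence satisfy the Determinant Conjecture by Elek--Szab\'o \cite{ElekSzabo_hyperlinearity}. With this, your Step~2 goes through essentially as written. Residual finiteness together with finite presentability also gives the uniformly solvable word problem you need.

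Your fallback for the case where only upper approximations are available does not work. An approximation from above that drops below $1$ certifies $\b{2}(H)=0$, i.e.\ coherence, not nonvanishing. So both of your parallel procedures are semi-decision procedures for coherence, and on an incoherent input neither ever halts. Certifying $\b{2}>0$ is exactly what requires the lower error bound supplied by the Determinant Conjecture.

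The fibring search is also weaker than you claim. BNS-type certificates only detect characters with finitely generated kernel, and no finite-index subgroup admits one when $\b{1}(G)>0$. For instance, the coherent group $F_2*\Z^2$ has $\b{2}=0$, but every free-by-cyclic structure on a finite-index subgroup has infinitely generated kernel.
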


Additionally, we obtain some rigidity properties of coherence among virtually RFRS groups, which follow from the fact that the vanishing of $L^2$-Betti numbers has good invariance properties.

\begin{cor}[\cref{cor: rigidity}]
    Let $G$ and $H$ be finitely generated virtually RFRS groups of virtual cohomological dimension at most $2$. If $G$ and $H$ are
    \begin{enumerate}
        \item\label{item: QI_intro} quasi-isometric,
        \item\label{item: ME_intro} measure equivalent, or
        \item\label{item: profisom_intro} virtually compact special and profinitely isomorphic,
    \end{enumerate}
    then $G$ is coherent if and only if $H$ is coherent.
\end{cor}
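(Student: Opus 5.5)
By \cref{cor: C}, for finitely generated virtually RFRS groups of virtual cohomological dimension at most $2$, coherence is equivalent to $\b{2}(G) = 0$. So the plan is to reduce each item to the invariance of vanishing of $\b{2}$ under the relevant equivalence. One preliminary point: \cref{cor: C} assumes $\cd_\Q(G) \leqslant 2$, whereas here we only know the virtual cohomological dimension is at most $2$. Since $G$ is virtually RFRS, it is virtually torsion-free. Pass to a torsion-free RFRS finite-index subgroup $G_0$ with $\cd_\Q(G_0) \leqslant \cd(G_0) \leqslant 2$. Coherence passes to and from finite-index subgroups, and $\b{2}(G_0) = [G:G_0]\,\b{2}(G)$. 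Hence $G$ is coherent if and only if $\b{2}(G) = 0$, and likewise for $H$.

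\emph{Item \ref{item: ME_intro}.} Gaboriau's theorem states that if $G$ and $H$ are measure equivalent with coupling index $c$, then $\b{n}(G) = c\,\b{n}(H)$ for all $n$. Vanishing of $\b{2}$ is therefore preserved, and the claim follows.

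\emph{Item \ref{item: QI_intro}.} Quasi-isometric groups need not be measure equivalent, so I would instead use that, for groups admitting a finite model of $\underline{E}G$ (or at least of finite type), vanishing of $\ell^2$-Betti numbers is a quasi-isometry invariant in the top dimension. I would argue via $\ell^2$-cohomology, following Pansu and Mimura--Ozawa--Sako--Suzuki, which requires type $\FP_\infty$-type hypotheses. This is the \textbf{main obstacle}: $G$ is only finitely generated, not finitely presented.

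To get around it, first apply the dichotomy. If $\b{2}(G) = 0$, then $G$ is virtually free-by-cyclic by \cref{cor: C}. In that case I would argue that $G$ is of type $\FP_2(\Q)$, since $G$ is coherent and hence finitely presented, and of $\cd \leqslant 2$, so virtually of type $\FP$. Since $H$ is quasi-isometric to $G$, it is then also finitely presented. If moreover $\b{2}(H) > 0$, then $H$ is incoherent but still finitely presented of dimension $2$, so both groups are virtually of type $\FP(\Q)$ with dimension $2$. For such groups, $\b{2}$ is the top-dimensional $L^2$-Betti number, equal to $\dim$ of $\ell^2$-cycles. Vanishing of $\ell^2$-homology in the top degree is a quasi-isometry invariant for groups of type $F$ of that dimension; the coarse cycle-transfer argument works because top-degree $\ell^2$-homology has no boundaries to quotient by. This yields $\b{2}(H) = 0$, a contradiction.

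\emph{Item \ref{item: profisom_intro}.} For virtually compact special groups of vcd at most $2$, I would use that $\b{2}$ equals, via Lück approximation (valid since these groups are residually finite and of type $F$ virtually), the limit of normalised mod-$p$ or rational Betti numbers along a cofinal chain of finite-index normal subgroups. Instead of that route, I would prefer the shorter one. Profinitely isomorphic groups have matching finite quotients, so the finite-index subgroups correspond with equal indices. Good groups (compact special groups are good, by Wise and Minasyan) have $\H^*$ with finite coefficients determined by the profinite completion. Combining this with $\chi$, which is determined by Betti numbers mod $p$ together with the vanishing $\b{0}=0$, and with $\b{1}$ via Lück approximation using $b_1$ of finite-index subgroups, gives $\b{2} = \chi + \b{1}$. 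Here $\chi$ is a profinite invariant for good groups of type $F$, and $\b{1} = \lim b_1(G_i)/[G:G_i]$ is profinitely invariant because $b_1$ of finite-index subgroups is read off from abelianisations of profinite completions. Hence $\b{2}(G) = \b{2}(H)$, and the claim follows by the first paragraph.
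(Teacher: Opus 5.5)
Your proposal is correct. For measure equivalence and profinite isomorphism it is essentially the paper's argument. Measure equivalence is Gaboriau. In the profinite case, goodness gives $\chi(G)=\chi(H)$ through mod-$p$ Betti numbers, $\b{1}$ is a profinite invariant, and then $\b{2}=\chi+\b{1}$. The only difference is that you reprove the invariance of $\b{1}$ (L\"uck approximation plus abelianisations of corresponding finite-index subgroups), where the paper cites Bridson--Reid--Conder, whose proof is exactly that. When you write it out, two details need care:
\begin{itemize}
\item Take the chain cofinal in the profinite topology, so that the corresponding chain in $H$ still has trivial intersection.
\item Pass to torsion-free finite-index subgroups that correspond to each other under $\widehat G\cong\widehat H$. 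The subgroup of $H$ corresponding to a torsion-free subgroup of $G$ need not be torsion-free, so intersect with the preimage of a torsion-free one from $H$.
\end{itemize}

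The genuine divergence is the quasi-isometry case, and there the obstacle you identify does not exist. The Mimura--Ozawa--Sako--Suzuki theorem the paper cites shows that vanishing of each $\b{n}$ is a coarse invariant of arbitrary finitely generated groups, with no finiteness hypotheses. So the paper's proof is a one-line citation, which is also why its remark can relax virtual cohomological dimension to $\cd_\Q$.

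Your bootstrap is logically sound: coherent $G$ is finitely presented, hence so is $H$, hence both are virtually of type $\FP$ and a finite-type invariance result applies. It also shows that, given the main theorem, only the classical Pansu-type invariance is needed. But your justification of that last step is only a sketch. The ``no boundaries in top degree'' heuristic presupposes cocompact contractible $2$-complexes. A finitely presented group of cohomological dimension $2$ is only guaranteed a classifying space of dimension $3$ (the Eilenberg--Ganea problem), and type $\FP$ versus type $F$ is a further issue.

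The step can be repaired. Use classifying spaces with finite $3$-skeleton, which exist since these groups are finitely presented of type $\FP$, together with controlled chain maps up to degree $3$; or simply cite Pansu. The cleanest fix, though, is to drop the detour and cite the general coarse invariance directly.
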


Even though finite-presentability is well known to be a quasi-isometry invariant, it is an open problem whether the same holds for coherence \cite[Section 21, Problem 35]{Wise_anInvitation}. Part of the interest in item (2) lies in the fact that $\SL_2(\Z[\frac{1}{p}])$ is measure equivalent to the incoherent group $F_2 \times F_2$, and the (in)coherence of $\SL_2(\Z[\frac{1}{p}])$ is a famous open problem of Serre's \cite[pp.~734-735]{Proceedings_groupTheorySL2Zquestion}. We of course do not answer this question since $\SL_2(\Z[\frac{1}{p}])$ is not virtually RFRS. It seems that little is known about the profinite invariance of coherence; one result in this direction is a theorem of Morales, which states that coherence is a profinite invariant among finitely generated residually free groups \cite[Theorem D]{Morales_resFreeProfinite}, which form a subclass of the class of RFRS groups (though his result holds in arbitrary cohomological dimension). It has also been shown that the property of being virtually free-by-cyclic is a profinite invariant among virtually compact special groups of cohomological dimension at most two \cite[Corollary A.9]{HughesMollerVarghese_CoxterProfiniteProps}

To conclude, recall that a group $G$ is \emph{parafree} if it is residually nilpotent and there is some free group $F$ such that $G$ and $F$ have the same set of (isomorphism classes of) nilpotent quotients. Baumslag's \emph{Parafree Conjecture} predicts that any finitely generated parafree group $G$ satisfies $\H_2(G;\Z) = 0$. We show that in dimension $2$ the parafree conjecture is equivalent to the conjecture that parafree groups are coherent. Conjecturally, all parafree groups are of cohomological dimension at most $2$.

\begin{cor}[\cref{cor: parafree}]
    If $G$ is a finitely generated parafree group with $\cd(G) \leqslant 2$, then $G$ satisfies the Parafree Conjecture if and only if $G$ is coherent.
\end{cor}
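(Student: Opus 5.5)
Residually nilpotent groups with the same nilpotent quotients as a free group $F$ are residually torsion-free nilpotent. Such groups are RFRS, or at least virtually RFRS once finitely generated, because residually torsion-free nilpotent groups are RFRS: take the rational lower central series filtration. So \cref{cor: C} applies to $G$. That result gives $G$ coherent if and only if $\b{2}(G)=0$. The plan is therefore to show that, for a finitely generated parafree group $G$ with $\cd(G)\leqslant 2$,
\[
\H_2(G;\Z)=0 \iff \b{2}(G)=0 .
\]

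We need an Euler characteristic computation. Since $G$ is finitely generated and of type $\FP$ is not given a priori, I will first reduce to a setting where Euler characteristics make sense.

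\emph{Direction ($\Leftarrow$).} Suppose $\b{2}(G)=0$. By \cref{cor: C}, $G$ is coherent, hence finitely presented, and being of cohomological dimension at most $2$ it is of type $\FP$. Parafree groups share $G/\gamma_n G$ with $F$ for all $n$. Hence $\H_1(G;\Z)\cong \Z^r$, where $r$ is the rank of $F$, and $\b{1}(G)$... Rather than work with $\b{1}(G)$, I will use the standard fact, via Stallings' theorem applied to $F\to G$, that $b_2(G;\Q)$ controls the situation. Assume $G$ is finitely presented. Then the Euler characteristic gives
\[
\chi(G)=1-\b{1}(G)+\b{2}(G)=1-r+b_2(G;\Q).
\]
Since $G$ is residually torsion-free nilpotent and non-cyclic when $r\geqslant 2$, $\b{0}(G)=0$; the case $r\leqslant 1$ is trivial since then $G$ is $1$ or $\Z$. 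The key input is $\b{1}(G)\geqslant r-1$. I will obtain it from Lück approximation / RFRS-type results on first $L^2$-Betti numbers over the residual chain, or more simply from a finite presentation with deficiency considerations. With $\b{2}(G)=0$, this gives $b_2(G;\Q)\leqslant 0$, so $\H_2(G;\Q)=0$. Since $\cd G\leqslant 2$, $\H_2(G;\Z)$ is torsion-free, so $\H_2(G;\Z)=0$.

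\emph{Direction ($\Rightarrow$).} Suppose $\H_2(G;\Z)=0$. By Stallings' theorem, the map $F\to G$ inducing isomorphisms on nilpotent quotients lifts. Choose an injective map $F\to G$ that is an isomorphism on $\H_1$; for parafree groups this exists by sending a basis to preimages. Then $\H_2(G)=0$ gives isomorphisms on all $G/\gamma_n$. I then want $\b{2}(G)=0$. The natural route is to show $G$ is homologically coherent via \cref{thm: A}. Directly, I would argue with the Linnell skew field $\mathcal D_{\Q[G]}$, which exists for RFRS groups. Vanishing of $\H_2(G;\Q)$ together with $\H_1(G;\Q)=\Q^r$ should bound $\b{2}(G)\leqslant b_2(G;\Q)$ along the residual rational chain. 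Kielak-style RFRS semicontinuity gives $\b{2}(G)\leqslant \liminf b_2(G_i;\Q)/[G:G_i]$, but passing to finite-index subgroups loses parafreeness.

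\emph{Main obstacle.} This direction is the main obstacle. I would instead use a $\mathcal D$-analogue of Stallings' theorem: a map inducing an isomorphism on $\H_1$ and a surjection on $\H_2$ with rational coefficients induces the same on $\mathcal D$-homology for locally indicable (Hughes-free) targets. This is a known principle, used by Jaikin-Zapirain for parafree groups. Applied to $F\to G$, it yields $\b{2}(G)\leqslant \b{2}(F)=0$, because $\H_2(G;\mathcal D)$ receives a surjection from $\H_2(F;\mathcal D)=0$. Then \cref{cor: C} gives coherence.
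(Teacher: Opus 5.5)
\textbf{There is a genuine gap in the direction ``coherent $\Rightarrow$ $\H_2(G;\Z)=0$''.} Several parts of your argument are correct:
\begin{enumerate}
\item Once $\b{0}(G)=0$ is used, the Euler characteristic gives $b_2(G;\Q)=r-1-\b{1}(G)+\b{2}(G)$.
\item Since $\cd(G)\leqslant 2$, $\H_2(G;\Z)$ is torsion-free, so $\H_2(G;\Q)=0$ suffices. This avoids the positive-characteristic vanishing the paper invokes.
\end{enumerate}
However, the entire content of this direction is the inequality $\b{1}(G)\geqslant r-1$, and you do not prove it.

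The deficiency route cannot work without already knowing the answer. For a group with $b_1=r$, a presentation of deficiency $r$ forces $\H_2(G;\Z)=0$ by Hopf's formula. The approximation route needs control of $b_1$ along a residual chain, which, as you note yourself, parafreeness does not obviously give.

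The missing idea is to use $p$-power index subgroups. Since $G$ and $F_r$ have the same finite $p$-quotients, $\widehat G_p$ is free pro-$p$ of rank $r$. Put $G_0=G$ and $G_{i+1}=G_i^p[G_i,G_i]$. This is a chain of normal subgroups with trivial intersection, because parafree groups are residually $p$. The pro-$p$ completion of $G_i$ is the closure of $G_i$ in $\widehat G_p$, an open subgroup, hence free pro-$p$ of rank $1+(r-1)[G:G_i]$. Therefore $G_i^{ab}\otimes\Z_p\cong\Z_p^{1+(r-1)[G:G_i]}$. So $G_i^{ab}$ has no $p$-torsion and $b_1(G_i;\Q)=1+(r-1)[G:G_i]$. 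Since $G$ is finitely presented in this direction, L\"uck approximation gives $\b{1}(G)=r-1$, and hence $b_2(G;\Q)=\b{2}(G)=0$.

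With this added, your route is a self-contained alternative to the paper's. The paper instead uses $b_2^{\Dk{G}}(G)=0$ for every field $k$ (\cref{cor: hom coherence implies vanishing}) and defers to the proof of \cite[Corollary D]{FisherKlinge_RPVN}.

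\textbf{Two further problems.}

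First, your reason for $G$ being RFRS is false. Residually torsion-free nilpotent groups need not even be virtually RFRS. In the integral Heisenberg group, for any finite-index subgroup $K$, every term of any RFRS chain of $K$ contains $K\cap Z\neq 1$. This is because commutators of powers of the generators give powers of the central generator, so central elements are torsion in each abelianisation. Moreover, the terms of the rational lower central series are not of finite index. For parafree groups the conclusion is true, but it is a theorem of Reid \cite[Theorem 9.2]{Reid_parafreeRFRS}, which is what the paper cites.

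Second, in the direction ``$\H_2(G;\Z)=0\Rightarrow$ coherent'', your $\mathcal D$-analogue of Stallings' theorem is not a formal consequence of Stallings' theorem. As you state it (arbitrary locally indicable targets, no finiteness hypotheses, and $G$ not yet known to be finitely presented), it is not something you can simply quote. What actually makes this direction work is the universality of the Hughes-free division ring for RFRS groups \cite{JaikinZapirain2020THEUO}: every matrix $A$ over $\Q[G]$ satisfies $\mathrm{rk}_{\mathcal D_{\Q[G]}}(A)\geqslant\mathrm{rk}_\Q(\varepsilon(A))$.

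Given this, the map $F\to G$ is superfluous:
\begin{enumerate}
\item Take a resolution $0\to P_2\to P_1\to P_0\to\Q\to 0$ with $P_1,P_2$ free, possibly infinitely generated.
\item The hypothesis $\H_2(G;\Q)=0$ says that $P_2\otimes\Q\to P_1\otimes\Q$ is injective.
\item Applying the rank inequality on finitely generated free summands shows that $P_2\otimes\mathcal D_{\Q[G]}\to P_1\otimes\mathcal D_{\Q[G]}$ is injective.
\item Hence $\b{2}(G)=0$, and \cref{cor: C} gives coherence.
\end{enumerate}
The inequality $\b{2}\leqslant b_2(\,\cdot\,;\Q)$ in dimension two is the kind of input the paper takes from \cite{FisherKlinge_RPVN}. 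You should state and justify it, rather than appeal to an unnamed principle.
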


%%% NOVIKOV HOMOLOGY
\subsection{Novikov homology of coherent groups}

In \cite{KielakRFRS}, Kielak relates the $L^2$-homology of a RFRS group $G$ with its homology with coefficients in Novikov rings. The Novikov ring $\widehat{\Q[G]}^\chi$ associated to a homomorphism $\chi \colon G \rightarrow \R$ is a completion of $\Q[G]$ along $\chi$, and can be thought of as the ring of infinite series in elements of $G$ with rational coefficients, with the property that for each $c \in \R$, only finitely many elements $g$ of the support satisfy $\chi(g) \leqslant c$ (see \cref{def: Novikov}).

The main step in our proof is showing that, among two-dimensional groups, coherence implies the vanishing of the second Novikov homology with respect to all integral characters. We then use Kielak's result to show that this implies the vanishing of the second $L^2$-Betti number (see the proof of \cref{cor: hom coherence implies vanishing}). The precise result we obtain is the following, which we believe to be of independent interest.

\begin{thm}[\cref{thm: nov vanish,prop: nov wd 1}]\label{thm: novikov}
    Let $G$ be a group and let $R$ be a ring.
    \begin{enumerate}
        \item If every finitely generated subgroup of $G$ is of type $\FP_\infty(R)$, then
        \[
            \H_i\left(G;\widehat{R[G]}^\chi\right) = 0
        \]
        for all $i > 1$ and all integral characters $\chi \colon G \rightarrow \Z$.
        \item If $R[G]$ is coherent, then
        \[
            \Tor_i^{R[G]}\left(M, \widehat{R[G]}^\chi \right) = 0
        \]
        for all $i > 0$, all integral characters $\chi \colon G \rightarrow \Z$, and all right $R[G]$-modules $M$. In other words, $\widehat{R[G]}^\chi$ is of weak dimension at most $1$ as an $R[G]$-module.
    \end{enumerate}
\end{thm}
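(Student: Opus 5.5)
The plan for both parts is to split off the kernel of $\chi$, reduce the computation to homology over that kernel, and exploit the fact that, restricted to the kernel, the Novikov ring is essentially a product of free modules.

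If $\chi = 0$, then $\widehat{R[G]}^\chi = R[G]$ is free, and both statements are trivial. Otherwise, rescale so that $\chi$ is surjective, put $K = \ker\chi$, and choose $t \in G$ with $\chi(t) = 1$, so that $G = K \rtimes \langle t\rangle$. Write $N = \widehat{R[G]}^\chi$. Then $N$ is the ring of twisted Laurent series $R[K]((t))$: series $\sum_{n \geqslant -m} a_n t^n$ with $a_n \in R[K]$. As a left $R[K]$-module we have
\[
N \ \cong \ \bigoplus_{n<0} R[K]\,t^n \ \oplus \ \prod_{n \geqslant 0} R[K]\,t^n ,
\]
and the twisting by $t$ does not affect the module structure, since each $R[K]t^n$ is free of rank one. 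More precisely, $N$ is a directed union of modules of the form (finite free) $\oplus \prod_{\N} R[K]$.

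\textbf{Part (1).} The first step is to show that $\H_i(K;N) = 0$ for all $i \geqslant 1$. Write $K$ as the directed union of its finitely generated subgroups $K_j$. Each $K_j$ is a finitely generated subgroup of $G$, hence of type $\FP_\infty(R)$ by hypothesis, and therefore $\Tor^{R[K_j]}_i(R,-)$ commutes with direct products. Consequently $\H_i(K_j; \prod R[K])$ equals $\prod \H_i(K_j; R[K])$. Each factor vanishes for $i \geqslant 1$ because $R[K]$ is free over $R[K_j]$. Homology commutes with directed unions of modules, so $\H_i(K_j;N) = 0$. Homology also commutes with directed unions of groups, so $\H_i(K;N) = \operatorname{colim}_j \H_i(K_j;N) = 0$.

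The second step uses the Wang (Lyndon--Hochschild--Serre) long exact sequence for $G = K \rtimes \Z$:
\[
\cdots \to \H_i(K;N) \xrightarrow{1-t} \H_i(K;N) \to \H_i(G;N) \to \H_{i-1}(K;N) \to \cdots.
\]
For $i \geqslant 2$ both neighbouring terms vanish, so $\H_i(G;N) = 0$.

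\textbf{Part (2).} The same scheme applies, with flatness replacing the product-commutation argument.
\begin{enumerate}
\item Coherence of $R[G]$ passes to $R[K]$. The reason is that $R[G]$ is free as an $R[K]$-module, with $R[K]$ a direct summand as an $R[K]$-bimodule. Hence finitely generated left ideals of $R[K]$ are finitely presented, by faithfully flat induction and restriction.
\item By Chase's theorem, over the coherent ring $R[K]$ arbitrary products of flat modules are flat. Directed unions of flat modules are also flat. Hence $N$ is flat as a left $R[K]$-module, that is, $\Tor^{R[K]}_i(M,N) = 0$ for all $i > 0$ and all right $R[K]$-modules $M$.
\item To return to $R[G]$, apply the change-of-rings (Wang-type) sequence
\[
\Tor^{R[K]}_i(M,N) \xrightarrow{1-t} \Tor^{R[K]}_i(M,N) \to \Tor^{R[G]}_i(M,N) \to \Tor^{R[K]}_{i-1}(M,N) \xrightarrow{1-t} \cdots,
\]
which comes from the short exact sequence $0 \to R[G] \xrightarrow{1-t} R[G] \to R[G]\otimes_{R[K]} R \to 0$ of induced modules. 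This gives $\Tor^{R[G]}_i(M,N) = 0$ for $i \geqslant 2$.
\end{enumerate}

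Two points remain to check, and I expect the first to be the main obstacle. First, the degree-$1$ term, which equals the kernel of $1-t$ acting on $M \otimes_{R[K]} N$. To get vanishing for all $i > 0$, as claimed, I would try to show this map is injective. The idea is that $1-t$ is invertible in the Novikov ring, with inverse $\sum_{n \geqslant 0} t^n$, and that this inverse converges after tensoring because the $R[K]$-module structure of $N$ is compatible with the $t$-adic filtration. If that fails, the honest conclusion is vanishing for $i > 1$, which is exactly the weak-dimension-at-most-one formulation. Second, the passage of coherence to the subgroup $K$ in part (2) needs to be checked carefully, although it is standard.
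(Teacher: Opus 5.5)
Your proof is correct and follows the paper's route. Where you decompose $\widehat{R[G]}^\chi\cong\bigoplus_{n<0}R[K]\oplus\prod_{n\geqslant0}R[K]$ over $R[K]$, the paper uses the exact sequence $0\to R[G]\to\widehat{R[G]}^{\chi}\oplus\widehat{R[G]}^{-\chi}\to\prod_{\Z}R[K]\to0$. It then kills $\H_{\geqslant1}(K;-)$ using finiteness of finitely generated subgroups, and $\Tor^{R[K]}_{\geqslant1}(M,-)$ using Chase's criterion, and finishes with the Lyndon--Hochschild--Serre spectral sequence over $\Z$, all as you do.

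Two corrections. First, the long exact (Wang) sequence you write down is right, but the short exact sequence you derive it from is not exact: the cokernel of $1-t$ on $R[G]$ is induced from $\langle t\rangle$, not from $K$. Derive it instead from $0\to R[G]\otimes_{R[K]}N\to R[G]\otimes_{R[K]}N\to N\to0$, with first map $x\otimes y\mapsto xt\otimes t\inv y-x\otimes y$ and second map multiplication.

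Second, the degree-one injectivity you hope for is false. Take $G$ free on $a,t$ with $\chi(a)=0$, $\chi(t)=1$, and $R$ a field. Since $t-1$ is a unit in the Novikov ring, $\H_1(G;\widehat{R[G]}^\chi)\cong\widehat{R[G]}^\chi\neq0$, even though $R[G]$ is coherent. So the ``$i>0$'' in part (2) is a typo for ``$i>1$'' (compare \cref{prop: nov wd 1}), and that is precisely what you proved.
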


We hope \cref{thm: novikov} might provide a new homological tool to prove incoherence outside the class of RFRS groups. It also has the following consequence (via Sikorav's theorem \cite{SikoravThese}) on the structure of the $\Sigma$-invariant of a coherent two-dimensional group, which appears to be new. We use $\Sigma_\Q^i(G)$ to denote the set of rational characters lying in the usual $i$th $\Sigma$-invariant $\Sigma^i(G)$ defined in \cite{BNSinv87,BieriRenzValutations}.

\begin{cor}[\cref{cor: BNS of coherent}] \label{cor: BNS_intro}
    Let $G$ be a finitely presented group of cohomological dimension at most $2$. If $G$ is coherent, then $\Sigma_\Q^1(G) = \Sigma_\Q^2(G)$.
\end{cor}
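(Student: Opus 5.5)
The plan is to combine \cref{thm: novikov}(1) with Sikorav's theorem and its higher-dimensional version due to Bieri--Renz. For a group $G$ of type $\FP_n(\Z)$ and a nonzero character $\chi\colon G\to\R$, this states that $-\chi \in \Sigma^n(G;\Z)$ (with the appropriate sign convention) if and only if $\H_i(G;\widehat{\Z[G]}^{\chi}) = 0$ for all $i \leqslant n$. For finitely presented $G$, the homotopical invariant satisfies $\Sigma^2(G) = \Sigma^1(G) \cap \Sigma^2(G;\Z)$. It therefore suffices to show that for every rational $\chi \in \Sigma^1(G)$ we also have $\chi \in \Sigma^2(G;\Z)$. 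The inclusion $\Sigma^2_\Q(G) \subseteq \Sigma^1_\Q(G)$ holds by definition.

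First I would reduce to integral characters. A rational character is a positive multiple of an integral character $\chi\colon G \to \Z$, and the $\Sigma$-invariants are defined on rays, so nothing is lost. Fix such a $\chi$ with $[\chi] \in \Sigma^1(G)$, and let $\chi' = \pm\chi$ be the character whose Novikov ring governs membership of $[\chi]$ under Sikorav's convention. By Sikorav's theorem for $n=1$, $\H_0(G;\widehat{\Z[G]}^{\chi'}) = \H_1(G;\widehat{\Z[G]}^{\chi'}) = 0$. It remains to show $\H_2(G;\widehat{\Z[G]}^{\chi'}) = 0$; once this holds, the Bieri--Renz criterion gives $[\chi] \in \Sigma^2(G;\Z)$, and hence $[\chi] \in \Sigma^2(G)$.

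To get the vanishing of $\H_2$, I would apply \cref{thm: novikov}(1) with $R = \Z$. This needs every finitely generated subgroup of $G$ to be of type $\FP_\infty(\Z)$. Since $G$ is coherent, every finitely generated subgroup $H$ is finitely presented, hence of type $\FP_2(\Z)$. Moreover $\cd(H) \leqslant \cd(G) \leqslant 2$. A group of type $\FP_2$ with cohomological dimension at most $2$ is of type $\FP$: in a partial free resolution of length two by finitely generated modules, the kernel at degree two is projective because $\cd \leqslant 2$, and it is finitely generated since the resolution can be extended by $\FP_2$ (a finitely presented module's syzygy argument via Schanuel's lemma). So $H$ is of type $\FP_\infty(\Z)$. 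Then \cref{thm: novikov}(1) gives $\H_i(G;\widehat{\Z[G]}^{\chi'})=0$ for all $i>1$ and every integral character, in particular for $\chi'$, so the argument goes through regardless of the sign convention.

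The main obstacle is bookkeeping rather than mathematics. I need to match the sign conventions between Novikov rings and $\Sigma$-invariants, and to pass from the homological invariant $\Sigma^2(G;\Z)$ to the homotopical $\Sigma^2(G)$ for finitely presented $G$, which uses the standard fact $\Sigma^2(G) = \Sigma^1(G)\cap\Sigma^2(G;\Z)$. I would check that the statement $\Sigma^2_\Q(G)$ in the corollary refers to whichever version this identity makes harmless.
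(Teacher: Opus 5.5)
Your main line is exactly the paper's proof of \cref{cor: BNS of coherent}. Finitely generated subgroups are finitely presented, hence of type $\FP(\Z)$ since $\cd(G)\leqslant 2$. \cref{thm: nov vanish} with $R=\Z$ then kills $\H_2(G;\widehat{\Z[G]}^{\chi})$ for every integral $\chi$, so the sign convention is indeed harmless. Sikorav's characterisation then gives $\Sigma^1_\Q(G;\Z)=\Sigma^2_\Q(G;\Z)$. This is complete for the homological invariants, and those are what the paper means: it defines $\Sigma^i(G;R)$ by the vanishing of Novikov homology and proves the corollary in that form.

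The one genuine error is the ``standard fact'' $\Sigma^2(G)=\Sigma^1(G)\cap\Sigma^2(G;\Z)$, which you use to pass to the homotopical invariant. It is false. What holds is $\Sigma^1(G)=\Sigma^1(G;\Z)$ and $\Sigma^m(G)=\Sigma^2(G)\cap\Sigma^m(G;\Z)$ for $m\geqslant 2$, and the latter is vacuous at $m=2$.

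Bestvina--Brady groups give a counterexample. Let $L$ be a finite flag complex that is acyclic but not simply connected, and let $\chi\colon A_L\to\Z$ send every standard generator to $1$. Then $\ker\chi$ is finitely generated and of type $\FP_2$ but not finitely presented. Hence $\pm\chi\in\Sigma^1(A_L)\cap\Sigma^2(A_L;\Z)$. The automorphism inverting all generators exchanges $\chi$ and $-\chi$, so neither lies in $\Sigma^2(A_L)$.

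So your concluding ``hence $[\chi]\in\Sigma^2(G)$'' is unsupported, and Novikov homology alone cannot supply it. For the statement as the paper proves it, just delete that step.

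If you do want the homotopical version, argue differently. A discrete $\chi\in\Sigma^1(G)$ exhibits $G$ as an ascending HNN extension with finitely generated base $B\leqslant\ker\chi$ (Bieri--Neumann--Strebel, Brown). Coherence makes $B$ finitely presented. The mapping torus of a cellular self-map of a finite presentation complex of $B$, realising the endomorphism, then shows $\chi\in\Sigma^2(G)$. Its half-spaces in the ascending direction are universal covers of forward mapping telescopes, whose fundamental groups map isomorphically onto $\ker\chi$. This route uses neither Novikov homology nor the hypothesis $\cd(G)\leqslant 2$.
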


%%% ORGANISATION
\subsection*{Organisation of the paper}

In \cref{sec: prelims}, we recall the behaviour of homology with coefficients in an infinite product for modules of finite type and a result by Okun and Schreve, building on work of Kielak and Jaikin-Zapirain, concerning the homology of Novikov rings, central to our work. In \cref{sec: vanishing_homology}, we prove \cref{thm: novikov} showing that coherence implies vanishing of the Novikov homology. This is the key result that is used in the proof of \cref{thm: B} which produces the vanishing of $L^2$-Betti numbers in the presence of sufficiently strong finiteness conditions. From this, we obtain \cref{thm: A} providing the global obstruction to coherence. We also deduce \cref{cor: intro coherent special} and \cref{cor: BNS_intro} in this section. \cref{sec: application} contains the proof of \cref{cor: C} characterising coherent RFRS groups of cohomological dimension at most $2$, and the applications of the main results listed in \cref{subsec: applic_intro}. In particular, we classify coherent two-dimensional Coxeter groups. Finally, in Appendix \ref{sec: appendix}, Marco Linton proves \cref{thm: locally quasi convex}.

%%% ACKNOWLEDGMENTS
\subsection*{Acknowledgments}

The authors are very grateful to Andrei Jaikin-Zapirain and Dawid Kielak for useful conversations and comments on many previous versions of this article. The first author is supported by  the grant \seqsplit{CEX2023-001347-S} of the Ministry of Science, Innovation, and Universities of Spain. The second author is supported by the grants \seqsplit{PID2024-155800NB-C33} and \seqsplit{EUR2025-164928} of the Ministry of Science, Innovation, and Universities of Spain. The second author would also like to thank Dawid Kielak and the Mathematical Institute of the University of Oxford for the hospitality and support, since the foundations for this paper were laid while he was a guest at this institute.

%%%
%%% PRELIMINARIES
%%%
\section{Preliminaries} \label{sec: prelims}

Throughout, rings are assumed to be unital and associative, ring homomorphisms preserve the unit, and modules are left modules unless otherwise specified. If $R$ is a ring and $G$ is a group, then $R[G]$ denotes the corresponding group ring.

%%% FINITENESS PROPERTIES
\subsection{Finiteness properties of modules and groups}

\begin{defn}[Finiteness properties]
    Let $R$ be a ring. An $R$-module $M$ is of \emph{type $\FP_n$} for a non-negative integer $n$ if there is a projective resolution
    \[
        \cdots \to P_2 \to P_1 \to P_0 \to M \to 0
    \]
    such that $P_i$ is finitely generated for all $i \leqslant n$. If there exists a projective resolution as above such that all the modules $P_i$ are finitely generated, then $M$ is of \emph{type $\FP_\infty$}. If, additionally, there is some non-negative integer $n_0$ such that $P_i = 0$ for all $i \geq n_0$, then $M$ is of \emph{type $\FP$}. Analogous definitions exist for right $R$-modules.

    A group $G$ is of \emph{type $\FP_n(R)$} (resp.\ $\FP_\infty(R)$, resp. $\FP(R)$) if the trivial $R[G]$-module $R$ is of type $\FP_n$ (resp.\ $\FP_\infty$, resp. $\FP$).
\end{defn}

\begin{defn}[Dimensions]
    Let $R$ be a ring and $M$ a non-trivial $R$-module. The \emph{weak dimension} of $M$ is
    \[
        \sup\left\{ n \in \Z_{\geq 0} : \Tor_R^{n}(L,M) \neq \{0\} \ \text{for some right} \ R\text{-module} \ L \right\}.
    \]
    The \emph{projective dimension} of $M$ is the infimal length of a projective resolution for $M$. It is easy to see that the projective dimension is an upper bound for the weak dimension.

    Let $G$ be a group. The \emph{cohomological dimension} of $G$ over $R$, denoted $\cd_R(G)$, is the projective dimension of the trivial $R[G]$-module $R$.
\end{defn}

If $G$ is of type $\FP_n(R)$ and $\cd_R(G) \leqslant n$, then $G$ is of type $\FP(R)$ \cite[Proposition VIII.4.3]{BrownGroupCohomology}.

\begin{defn}[Augmentation ideal]
    Let $R$ be a ring and let $G$ be a group. The homomorphism 
    \[
        \varepsilon_R \colon R[G] \to R, \quad \sum_{g \in G} r_g g \mapsto \sum_{g \in G} r_g
    \]
    is called the \emph{augmentation map} of $R[G]$. The kernel of $\varepsilon_R$ is a two-sided ideal $I_{R[G]}$ called the \emph{augmentation ideal} of $R[G]$.
\end{defn}

It is not difficult to show that $G$ is of type $\FP_n(R)$ if and only if $I_{R[G]}$ is of type $\FP_{n-1}$ as an $R[G]$-module.

\begin{defn}[Coherence]
    A ring is \emph{(left) coherent} if all of its finitely generated left ideals are finitely presented (or, in the terminology above, if all of its left ideals of type $\FP_0$ are of type $\FP_1$). Similarly, a group is \emph{coherent} if all of its finitely generated subgroups are finitely presented. For a ring $R$, a group is \emph{homologically coherent} over $R$ if all of its finitely generated subgroups are of type $\FP_2(R)$.
\end{defn}

A group algebra $k[G]$ is left coherent if and only if it is right coherent. Hence, from now on we will just refer to $k[G]$ as \emph{coherent} if and only if it is left or right coherent.

We will use the following characterisation of homological coherence over $R$, which follows easily from the definitions above: $G$ is homologically coherent over $R$ if and only if $I_{R[H]}$ is a finitely presented $R[H]$-module for all finitely generated subgroups $H$ of $G$.

We record a standard fact about modules of type $\FP_\infty$ and include a proof for the reader's convenience.

\begin{lem}\label{lem: tor of products}
    Let $R$ be a ring and let $M$ be a right $R$-module. If $M$ is of type $\FP_{n+1}$ for some $n \geqslant 1$, then
    \[
        \Tor_i^R \left(M, \prod R \right) = 0
    \]
    for all $0 < i \leqslant n$, and the multiplication map
    \[
        M \otimes_R \prod R \to \prod M, \quad m \otimes (r_i)_{i \in I} \mapsto (m r_i)_{i \in I}
    \]
    is an isomorphism, where the direct product is taken over an arbitrary index set $I$.
\end{lem}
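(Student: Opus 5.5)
The plan is to take a projective resolution of $M$ in which the first $n+2$ terms are finitely generated, and pass to a free resolution by the usual Eilenberg swindle / adding complements. Concretely, since $M$ is of type $\FP_{n+1}$, there is a resolution
\[
    F_{n+1} \to F_n \to \cdots \to F_1 \to F_0 \to M \to 0
\]
of right $R$-modules in which each $F_i$ ($0 \leqslant i \leqslant n+1$) is finitely generated free. (A finitely generated projective resolution can be replaced by a finitely generated free one in degrees $\leqslant n+1$ by the standard trick of adding a complement $Q_i$ with $P_i \oplus Q_i$ free to both $P_i$ and $P_{i+1}$.) We extend this arbitrarily to a full projective resolution $F_\bullet \to M$, with $F_i$ possibly infinitely generated for $i \geqslant n+2$.

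The key observation is that for a finitely generated free right module $F = R^k$, the natural map
\[
    F \otimes_R \prod_{i \in I} R \to \prod_{i \in I} F, \qquad x \otimes (r_i) \mapsto (x r_i)
\]
is an isomorphism, since both sides are identified with $(\prod_I R)^k$ and the map is compatible with these identifications. These maps are natural in $F$, so we get an isomorphism of chain complexes between $F_\bullet \otimes_R \prod R$ and $\prod F_\bullet$ in degrees $0$ through $n+1$, commuting with the differentials.

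Now $\prod$ is an exact functor on abelian groups, so $\prod F_\bullet \to \prod M \to 0$ is exact. Hence the homology of $\prod F_\bullet$ vanishes in degrees $1 \leqslant i \leqslant n$ and its $H_0$ is $\prod M$. The homology of $F_\bullet \otimes_R \prod R$ in degree $i$ is $\Tor_i^R(M, \prod R)$, and in degrees $i \leqslant n$ this homology only depends on $F_{i+1}, F_i, F_{i-1}$, all of which lie in the range $0,\dots,n+1$ where we have the isomorphism. Thus $\Tor_i^R(M,\prod R) \cong H_i(\prod F_\bullet) = 0$ for $0<i\leqslant n$. In degree $0$, right exactness of the tensor product gives $M \otimes_R \prod R = \operatorname{coker}(F_1 \otimes \prod R \to F_0 \otimes \prod R) \cong \operatorname{coker}(\prod F_1 \to \prod F_0) = \prod M$, and one checks by chasing the element $m \otimes (r_i)$ (lift $m$ to $F_0$) that this isomorphism is exactly the multiplication map in the statement.

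The only mildly delicate point is bookkeeping of degrees: the degree-$n$ homology needs $F_{n+1}$ finitely generated, which is exactly why $\FP_{n+1}$ rather than $\FP_n$ is assumed; and verifying that the induced degree-$0$ isomorphism coincides with the stated multiplication map, which is a routine naturality check.
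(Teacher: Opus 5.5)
Your proposal is correct and follows essentially the paper's proof. Both take a partial resolution by finitely generated free modules through degree $n+1$, use the natural isomorphisms $R^{d}\otimes_R \prod R \cong \prod R^{d}$ together with exactness of $\prod$ to get vanishing of $\Tor_i$ for $0<i\leqslant n$, and compare cokernels in degree $0$ (the paper phrases this as the Five Lemma). Your step converting projectives into frees is harmless but unnecessary, since finitely generated projectives are summands of finitely generated frees and already commute with $-\otimes_R \prod R$.
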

\begin{proof}
    Let 
    \[
        R^{d_{n+1}} \to R^{d_n} \to R^{d_{n-1}} \to \cdots \to R^{d_1} \to R^{d_0} \to M \to 0
    \]
    be a partial resolution of $M$ by finitely generated free $R$-modules. Since each $d_i$ is finite, the multiplication maps are isomorphisms
    \[
        R^{d_i} \otimes_R \prod R \xrightarrow{\cong} \prod R^{d_i}.
    \]
    Since the direct power functor $\prod \colon L \mapsto \prod L$ is exact, it follows that the chain complex
    \[
        \cdots \to R^{d_1} \otimes_R \prod R \to R^{d_0} \otimes_R \prod R \to 0
    \]
    is exact in degrees greater than zero. Hence, $\Tor_i^R(M, \prod R) = 0$ for all $0 < i \leqslant n$, proving the first claim.

    Since tensoring is right exact, the commutative diagram
    \[
        \begin{tikzcd}
            R^{d_1} \otimes_R \prod R \arrow[r]\arrow[d] & R^{d_0} \otimes_R \prod R \arrow[r]\arrow[d] & M \otimes \prod R \arrow[r]\arrow[d] & 0 \\
            \prod R^{d_1} \arrow[r] & \prod R^{d_0} \arrow[r] & \prod M \arrow[r] & 0
        \end{tikzcd}
    \]
    has exact rows. The middle and left vertical maps are isomorphisms since $d_0$ and  $d_1$ are finite. The second claim then follows from the Five Lemma. \qedhere
\end{proof}

We will use Chase's criterion for coherence of rings, which is closely related to the previous lemma, in the proof of \cref{prop: nov wd 1}.

\begin{thm}[{\cite{Chase_DirectProductsModules}}]\label{thm: Chase criterion}
    A ring $R$ is right coherent if and only if arbitrary products of flat left $R$-modules are flat.
\end{thm}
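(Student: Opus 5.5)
The plan is to prove both directions using the ideal criterion for flatness together with the comparison map $M \otimes_R \prod R \to \prod M$ that appears in \cref{lem: tor of products}. By the ideal criterion, a left $R$-module $N$ is flat if and only if, for every finitely generated right ideal $J \leqslant R$, the multiplication map $J \otimes_R N \to N$ is injective.

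\emph{Coherent implies products of flats are flat.} Suppose $R$ is right coherent. Let $(N_i)_{i \in I}$ be flat left modules and let $J$ be a finitely generated right ideal; by coherence $J$ is finitely presented. The argument of \cref{lem: tor of products} uses only a finite presentation $R^{d_1} \to R^{d_0} \to J \to 0$. Running it with arbitrary $N_i$ in place of copies of $R$, the Five Lemma shows that the natural map $J \otimes_R \prod N_i \to \prod (J \otimes_R N_i)$ is an isomorphism. This map commutes with the multiplication maps into $\prod N_i$. The map $\prod(J \otimes N_i) \to \prod N_i$ is a product of injections, since each $N_i$ is flat, and is therefore injective. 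Hence $J \otimes_R \prod N_i \to \prod N_i$ is injective, so $\prod N_i$ is flat.

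\emph{Products of flats are flat implies coherent.} Suppose arbitrary products of copies of $R$ are flat, and let $J$ be a finitely generated right ideal. Choose $0 \to K \to R^d \to J \to 0$; we must show $K$ is finitely generated. Fix the index set $I = K$ and write $\prod$ for $\prod_{I}$.

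\begin{enumerate}
\item Since $\prod R$ is flat, $J \otimes \prod R \to \prod R$ is injective. This map factors as $J \otimes \prod R \to \prod J \hookrightarrow \prod R$, so the comparison map $J \otimes \prod R \to \prod J$ is injective.
\item This comparison map is surjective because $J$ is finitely generated: the image of $R^{d} \otimes \prod R = \prod R^{d}$ already covers $\prod J$. So it is an isomorphism.
\item Compare the right exact row $K \otimes \prod R \to R^d \otimes \prod R \to J \otimes \prod R \to 0$ with the exact row $0 \to \prod K \to \prod R^d \to \prod J \to 0$. The middle and right vertical maps are isomorphisms, so a diagram chase (snake lemma) shows that $K \otimes \prod R \to \prod K$ is surjective.
\item Apply this to the tautological element $(k)_{k \in K} \in \prod_{k\in K} K$. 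It is the image of a finite sum $\sum_{j=1}^m k_j \otimes (r_{j,k})_{k}$. Reading off coordinates gives $k = \sum_j k_j r_{j,k}$ for every $k \in K$, so $K$ is generated by $k_1, \dots, k_m$.
\end{enumerate}

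Thus $J$ is finitely presented and $R$ is right coherent. The only delicate point is step 3 in the converse, where surjectivity onto $\prod K$ is extracted from the isomorphism for $J$. This requires checking that the diagram chase goes through when the top row is merely right exact, which it does because only surjectivity is needed.
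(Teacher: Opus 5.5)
Your proof is correct in both directions. The paper gives no proof of this statement; it quotes Chase's theorem and uses it as a black box, in \cref{prop: nov wd 1}, to see that $\prod R[N]$ is flat over the coherent ring $R[N]$. So there is no internal argument to compare against.

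What you wrote is the standard proof, essentially Chase's own. For the forward direction, you observe that the isomorphism part of \cref{lem: tor of products} needs only a finite presentation and works with arbitrary factors $N_i$ in place of copies of $R$. You then combine this with the finitely-generated-ideal criterion for flatness. Your steps 2--4 of the converse are the usual argument that a finitely generated module $J$ with $J\otimes_R\prod R\to\prod J$ bijective, for the index set $K$, must be finitely presented.

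The diagram chase in step 3 is sound. It uses only four facts: exactness of the top row at $R^d\otimes_R\prod R$, surjectivity of the middle vertical map, injectivity of the right vertical map, and injectivity of $\prod K\to\prod R^d$.

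Your converse uses only flatness of the direct power $\prod_K R$. So you have in fact proved Chase's sharper three-way equivalence: $R$ is right coherent iff every direct power of $R$ is a flat left module iff every product of flat left modules is flat.
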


%%% L2 BETTI NUMBERS
\subsection{\texorpdfstring{$L^2$}{L²}-Betti numbers}\label{subsec: L2}

In this section we give a brief review of $L^2$-Betti numbers, emphasising what will be important for us. We refer the reader to \cite{Luck02} for more background on $L^2$-invariants. Let $G$ be a countable group and let $\ell^2(G)$ denote the Hilbert space of complex series in elements of $G$ with square-summable coefficients. The left and right multiplication actions of $G$ on itself extend to left and right actions of $G$ on $\ell^2(G)$. We can further extend the right action of $G$ on $\ell^2(G)$ to an action of $\C[G]$ on $\ell^2(G)$, and hence consider $\C[G]$ as a subalgebra of $\mathcal{B}(\ell^2(G))$, namely the {\it bounded linear operators on} $\ell^2(G)$. The {\it group von Neumann algebra} $\mathcal N(G)$ of $G$ is the algebra of bounded operators on $\ell^2(G)$ that commute with the left action of $G$. Note that $\mathcal N(G)$ contains the complex group algebra $\C[G]$. The ring $\mathcal{N}(G)$ is a finite von Neumann algebra. Moreover, $\mathcal{N}(G)$ satisfies the left (and right) Ore condition with respect to the set of non-zero-divisors (a result proved by S. K. Berberian in \cite{Berberian_vonNeumannOre}). We denote by $\mathcal{U}(G)$ the left (resp. right) classical ring of fractions named the {\it ring of unbounded operators affiliated to} $G$. Using the trace functional of $\mathcal{N}(G)$, one can define a dimension function on all $\mathcal{N}(G)$-modules (see \cite[Chapters 6.1 and 6.2]{Luck02}), and define for every non-negative integer $n$ the $n$th $L^2$-Betti number of $G$ to be
\[
    \b{n}(G) = \dim_{\mathcal N(G)} \H_n(G; \mathcal N(G)).
\]

The description of $L^2$-Betti numbers given so far relies on the involved definition of $\dim_{\mathcal N(G)}$. However, we will work with groups that satisfy the Strong Atiyah Conjecture, in which case the definition is far more straightforward. We give a formulation of the Strong Atiyah Conjecture for torsion-free groups. Given a subfield $k$ of $\C$, the \emph{Linnell ring over $k$}, denoted $\mathcal{D}(k[G])$, is the division closure of $k[G]$ in $\mathcal{U}(G)$.

\begin{conj}[The Strong Atiyah Conjecture over $k \subseteq \C$]
    Let $G$ be a torsion-free countable group, and let $k$ be a subfield of $\C$. The Linnell ring $\mathcal{D}(k[G])$ is a division ring.
\end{conj}

Linnell and Schick showed that this statement is equivalent to the usual formulation of the Strong Atiyah Conjecture in terms of the von Neumann dimension function (see \cite[Lemma 12.3]{LinnellZeroDivc} and \cite[Lemma 3]{SchickL2Int2002}). While the Strong Atiyah Conjecture is open in general, it has been established for many large classes of groups (including RFRS groups \cite[Theorem 1]{SchickL2Int2002}, see \cite[Corollary 1.2]{JaikinBaseChange}). For groups satisfying the Strong Atiyah Conjecture over $k \subseteq \C$, we have
\[
    \b{n}(G) = \dim_{\mathcal{D}(k[G])} \H_n(G; \mathcal{D}(k[G]))
\]
where $\dim_{\mathcal{D}(k[G])}$ is just the usual rank of a module over a division ring.

Our results will also hold with certain natural positive characteristic generalisation of the $L^2$-Betti numbers, which we introduce now. A group is \emph{locally indicable} if all its non-trivial finitely generated subgroups admit an epimorphism to $\Z$.

\begin{defn}[Hughes-free division ring]
    Let $G$ be a locally indicable group, let $k$ be a field, and let $k[G] \hookrightarrow \mathcal D$ be an embedding into a division ring. We will view $k[G]$ as a subring of $\mathcal D$. For $N \leqslant G$, let $\mathcal D_N$ denote the division closure of $k[N]$ in $\mathcal D$.
    
    We say that $\mathcal D$ is a \emph{Hughes-free} division ring for $k[G]$ if $\mathcal{D} = \mathcal{D}_G$ and the multiplication map
    \[
        \mathcal D_N \otimes_{k[N]} k[H] \to \mathcal D, \quad x \otimes y \mapsto xy
    \]
    is injective whenever $H \leqslant G$ is finitely generated and $N \triangleleft H$ is such that $H/N \cong \Z$.
\end{defn}

Hughes proved that if $G$ is locally indicable and $k[G]$ admits a Hughes-free division ring $\mathcal D$, then it is unique up to isomorphism of $k[G]$-rings \cite{HughesDivRings1970}. We thus speak of \emph{the} Hughes-free division ring of $k[G]$, and denote it by $\Dk{G}$. Jaikin-Zapirain and L\'opez-\'Alvarez showed that if $G$ is locally indicable, then $G$ satisfies the Strong Atiyah Conjecture over $\C$ \cite{JaikinLopezStrongAtiyah2020}. Moreover, the Linnell ring of $k[G]$ satisfies the Hughes-free condition for any subring $k \subseteq \C$, and thus, by uniqueness up to $k[G]$-isomorphism, $\mathcal{D}(k[G])$ is identified $\Dk{G}$ for a locally indicable group $G$. The following generalisation of $L^2$-Betti numbers for locally indicable groups naturally presents itself.

\begin{defn}[$\Dk{G}$-Betti numbers]
    Let $G$ be a locally indicable group and let $k$ be a field. If $\Dk{G}$ exists, then we define 
    \[
        b_n^{\Dk{G}}(G) = \dim_{\Dk{G}} \H_n(G; \Dk{G}).
    \]
    More generally, if $M$ is a right $k[G]$-module, then we define
    \[
        \beta_n^{\Dk{G}}(M) = \dim_{\Dk{G}} \Tor_n^{k[G]} (M, \Dk{G}).
    \]
    Hence, $b_n^{\Dk{G}}(G) = \beta_n^{\Dk{G}}(k)$.
\end{defn}

By \cite[Corollary 1.3]{JaikinZapirain2020THEUO}, $\Dk{G}$ exists for any field $k$ and any residually \{locally indicable and amenable\} group $G$. Note also that if $k$ is a field of characteristic zero, $b_n^{\Dk{G}}(G) = \b{n}(G)$.

In \cref{sec: application}, we will often use the connection between $L^2$-Betti numbers and Euler characteristic. Let $G$ be a group admitting a compact classifying space. We define the \emph{Euler characteristic} of $G$ by
\[
    \chi(G) = \sum_{n \geqslant 0} (-1)^n b_n(G;\Q),
\]
or equivalently as the alternating sum of the cells in each dimension (in any choice of compact classifying space for $G$). If $G$ contains a finite-index subgroup $H$ admitting a compact classifying space, then we put $\chi(G) := \chi(H)/[G:H]$. This is well-defined, a fact which is easier to see using the definition in terms of cells than the one in terms of Betti numbers. There is a very straightforward relationship between the $L^2$-Betti numbers of $G$ and its Euler characteristic.

\begin{lem}[{\cite[Theorem 1.35(2)]{Luck02}}]
    If $G$ is a group containing a finite-index subgroup $H$ admitting a compact classifying space, then
    \[
        \chi(G) = \sum_{n \geqslant 0} (-1)^n \b{n}(G).
    \]
\end{lem}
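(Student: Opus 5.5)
This is the classical statement from Lück's book, \cite[Theorem 1.35(2)]{Luck02}, and I would prove it in three steps.

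\textbf{Step 1: pass to $H$.} Both sides scale by the index under passage to finite-index subgroups:
\begin{itemize}
\item[(a)] $L^2$-Betti numbers are multiplicative, $\b{n}(H) = [G:H]\,\b{n}(G)$.
\item[(b)] $\chi(G) := \chi(H)/[G:H]$ by definition.
\end{itemize}
So it suffices to prove $\chi(H) = \sum_n (-1)^n \b{n}(H)$ for a group $H$ with a compact classifying space $X$. For (a), I would take a free cocompact $G$-CW-model $EG$. Then $EG$ is also a model for $EH$, and $\mathcal N(G)\otimes_{\C[G]} C_*(EG)$ is obtained from $\mathcal N(H)\otimes_{\C[H]}C_*(EG)$ by induction. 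Lück's dimension satisfies $\dim_{\mathcal N(G)}(\mathcal N(G)\otimes_{\mathcal N(H)} M) = [G:H]^{-1}\dim_{\mathcal N(H)} M$ when $[G:H]<\infty$, and induction is exact, which gives (a).

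\textbf{Step 2: the cellular chain complex.} Let $\widetilde X$ be the universal cover, and consider the chain complex of finitely generated free modules
\[
C_* = \mathcal N(H)\otimes_{\C[H]} C_*(\widetilde X).
\]
Here $C_n \cong \mathcal N(H)^{c_n}$, where $c_n$ is the number of $n$-cells of $X$. Since $\dim_{\mathcal N(H)}\mathcal N(H) = 1$, we get $\dim C_n = c_n$. Hence $\chi(H) = \sum_n (-1)^n c_n = \sum_n (-1)^n \dim_{\mathcal N(H)} C_n$.

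\textbf{Step 3: additivity.} Apply the dimension function to the short exact sequences $0\to Z_n\to C_n\to B_{n-1}\to 0$ and $0\to B_n\to Z_n\to \H_n\to 0$. The dimension function is additive on short exact sequences of $\mathcal N(H)$-modules (\cite[Chapter 6]{Luck02}), so
\[
\dim C_n = \dim Z_n + \dim B_{n-1}, \qquad \dim Z_n = \dim B_n + \dim \H_n,
\]
and all of these are finite because they are bounded by $c_n$. Taking the alternating sum, the terms $\dim B_n$ telescope, leaving
\[
\sum_n (-1)^n c_n = \sum_n (-1)^n \dim_{\mathcal N(H)} \H_n(H;\mathcal N(H)) = \sum_n (-1)^n \b{n}(H).
\]

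\textbf{Main obstacle.} The only nontrivial input is the additivity of Lück's dimension on arbitrary (not necessarily finitely generated or projective) modules, together with its behaviour under induction used in Step 1. I would simply cite both from \cite[Theorem 6.7]{Luck02}.

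In the torsion-free Atiyah setting the obstacle disappears: with $\mathcal D$ a division ring, all the dimensions above are ordinary ranks over $\mathcal D$, so the same telescoping argument is elementary linear algebra.
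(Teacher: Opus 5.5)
Note first that the paper gives no proof of this lemma; it only cites \cite[Theorem 1.35(2)]{Luck02}. Your Steps 2 and 3 are the standard Euler--Poincar\'e telescoping argument behind that theorem and are correct, and reducing to $H$ is the right idea.

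\textbf{The gap is in Step 1(a).} Your justification there is wrong, and it reaches the right formula only because two errors cancel.
\begin{enumerate}
\item Induction does not rescale von Neumann dimension. For any $H\leqslant G$ one has $\dim_{\mathcal N(G)}(\mathcal N(G)\otimes_{\mathcal N(H)}M)=\dim_{\mathcal N(H)}M$; for $M=\mathcal N(H)$ both sides equal $1$. The index enters only through \emph{restriction}: $\dim_{\mathcal N(H)}\operatorname{res}^G_H N=[G:H]\dim_{\mathcal N(G)}N$.
\item $\mathcal N(G)\otimes_{\C[G]}C_*(EG)$ is not the induction of $\mathcal N(H)\otimes_{\C[H]}C_*(EG)$. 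The latter induces to $\mathcal N(G)\otimes_{\C[H]}C_*(EG)\cong\mathcal N(G)\otimes_{\C[G]}\bigl(\C[G]\otimes_{\C[H]}C_*(EG)\bigr)$, which has $[G:H]$ times as many free generators in each degree.
\item A free cocompact model of $EG$ exists only when $G$ itself has a compact classifying space, so in particular $G$ must be torsion-free. The lemma matters precisely for groups with torsion, such as the Coxeter groups of \cref{subsec: Coxeter}. Nothing in your argument actually needs cocompactness, so just drop it.
\end{enumerate}

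\textbf{The fix is to restrict rather than induce.} Write $\C[G]=\bigoplus_i\C[H]g_i$ and $\mathcal N(G)=\bigoplus_i\mathcal N(H)g_i$ for right coset representatives $g_i$. Multiplication then gives an isomorphism $\mathcal N(H)\otimes_{\C[H]}\C[G]\cong\mathcal N(G)$ of $(\mathcal N(H),\C[G])$-bimodules.

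Take any free resolution $P_*\to\C$ over $\C[G]$; no finiteness is needed, since L\"uck's extended dimension is defined and additive for all modules. Then $\mathcal N(G)\otimes_{\C[G]}P_*$, restricted to $\mathcal N(H)$, is $\mathcal N(H)\otimes_{\C[H]}P_*$, which computes $\H_*(H;\mathcal N(H))$.

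Restriction is exact, so $\operatorname{res}^G_H\H_n(G;\mathcal N(G))\cong\H_n(H;\mathcal N(H))$. The restriction formula now gives $\b{n}(H)=[G:H]\,\b{n}(G)$. Alternatively, simply cite this multiplicativity from \cite{Luck02}. With Step 1(a) replaced by this argument, your proof is complete.
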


In particular, if $G$ (virtually) admits a compact two-dimensional classifying space, then $\chi(G) > 0$ implies $\b{2}(G) > 0$.

%%% KIELAK JAIKIN DESCRIPTION
\subsection{RFRS groups and Novikov rings} \label{subsec: RFRS}

\begin{defn}\label{def: RFRS}
    A group $G$ is called \emph{residually finite rationally solvable}, or \emph{RFRS}, if there is a chain of normal subgroups $G = G_0 \geqslant G_1 \geqslant G_2 \geqslant \dots$ of finite index such that $\bigcap_{i \geqslant 0} G_i = \{1\}$, and
    \[
        \ker(G_i \to \Q \otimes G_i/[G_i,G_i]) \leqslant G_{i+1}
    \]
    for all $i \geqslant 0$.
\end{defn}

It is easy to see that RFRS groups are residually \{locally indicable and virtually Abelian\} from the definition. Hence, it follows from \cite[Corollary 1.3]{JaikinZapirain2020THEUO} that $\Dk{G}$ exists for all fields $k$ and RFRS groups $G$.

Let $G$ be a group and let $R$ be a ring. If $x = \sum_{g \in G} a_g g$ is a formal series where $a_g \in R$ for all $g \in G$, then we define
\[
    \supp(x) = \{ g \in G : a_g \neq 0\}
\]
and call it the \emph{support} of $x$. Note that there is no requirement that $\supp(x)$ be finite here.

\begin{defn}\label{def: Novikov}
    Let $G$ be a group, let $R$ be a ring, and let $\chi \colon G \rightarrow \R$ be a group homomorphism. The \emph{Novikov ring} of $R[G]$ associated to $\chi$ is the ring
    \[
        \widehat{R[G]}^\chi := \left\{ x = \sum_{g \in G} a_g g \ : \ a_g \in R, \  \left|\supp(x) \cap \chi\inv((-\infty, c]) \right| < \infty \ \text{for all} \ c \in \R \right\}.
    \]
    The ring operations on $\widehat{R[G]}^\chi$ naturally extend those on $R[G]$.
\end{defn}

In \cite{KielakRFRS}, Kielak showed that the $L^2$-homology of a RFRS group is closely related to the homology of its finite-index subgroups with coefficients in Novikov rings. This principle was extended to $\Dk{G}$-Betti numbers over arbitrary fields $k$ by Jaikin-Zapirain \cite[Appendix]{JaikinZapirain2020THEUO}, and is neatly summarised by the following result, due to Okun and Schreve \cite{OkunSchreve_DawidSimplified}.

\begin{thm}[{\cite[Theorem 7.1]{OkunSchreve_DawidSimplified}}]\label{thm: L2 Novikov}
    Let $k$ be a field, let $G$ be a finitely generated RFRS group, and let $M$ be a right $k[G]$-module of type $\FP_{n+1}$. There is a finite index subgroup $H \leqslant G$ and a character $\chi \colon H \rightarrow \Z$ such that 
    \[
        \Tor_i^{k[H]}\left(M, \novk H \chi\right) = \left(\novk H \chi\right)^{[G:H] \beta_i^{\Dk{G}}(M)}
    \]
    for all $i \leqslant n$, where $M$ is viewed as a $k[H]$-module by restriction.
\end{thm}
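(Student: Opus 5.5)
This is \cite[Theorem 7.1]{OkunSchreve_DawidSimplified}, and I would prove it along the lines of Kielak's argument \cite{KielakRFRS}, as extended to arbitrary fields by Jaikin-Zapirain \cite[Appendix]{JaikinZapirain2020THEUO}. The strategy is to reduce the computation of $\Tor$ to a finite amount of linear algebra over $\Dk{G}$. Then, using the RFRS tower, one finds a finite-index subgroup $H$ and a character $\chi\colon H\to\Z$ over whose Novikov ring that linear algebra can already be carried out.

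\textbf{Step 1: finitely many matrices.} Fix a partial free resolution
\[
 k[G]^{d_{n+1}}\xrightarrow{\partial_{n+1}}\cdots\xrightarrow{\partial_1} k[G]^{d_0}\to M\to 0
\]
with all $d_i$ finite. Tensoring with $\Dk{G}$ gives a complex of finite-dimensional $\Dk{G}$-vector spaces. In degrees $i\leqslant n$ its homology has dimension $\beta_i^{\Dk{G}}(M)$.

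Gaussian elimination over the division ring $\Dk{G}$ produces invertible matrices $A_i$ over $\Dk{G}$ that conjugate each $\partial_i$ into block normal form: an identity block, zero blocks, and a zero block of the size of the homology. All the entries of the $A_i$, their inverses, and the pivots involved form a finite subset $S\subseteq \Dk{G}$.

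\textbf{Step 2: passing to a Novikov ring.} This is the main obstacle. The key input is the structure theorem for $\Dk{G}$ of a RFRS group. For any finite $S\subseteq\Dk{G}$, there is a finite-index normal $H\leqslant G$ in the RFRS tower with the following property. Identify $\Dk{G}$ with matrices over $\Dk{H}$ via $\Dk{G}\cong \Dk{H}\otimes_{k[H]}k[G]$, which is free of rank $[G:H]$ over $\Dk{H}$. Then there is a character $\chi$ of $H$ such that every element of $S$, written as a $[G:H]\times[G:H]$ matrix, has entries lying in the image of $\Dk{H}$ inside a Novikov-type completion $\widehat{k[H]}^{\chi}$.

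I would prove this by induction along the tower. Each element of $S$ is a rational expression in $k[G]$. An inversion becomes possible in a Novikov ring once the $\chi$-minimal part of the support of the relevant element is invertible, which is an element of $k[\ker]$. The RFRS condition lets one pass to $G_{i+1}$ and use a character that is generic on the rational abelianisation, so that leading terms are controlled one level further down. Termination is guaranteed because supports are finite and $\bigcap G_i=1$.

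This produces a real character $\chi$. To make it integral, I would use that the set of characters for which a given finite list of matrices becomes invertible over $\widehat{k[H]}^{\chi}$ is open (a Sikorav/Bieri--Renz type openness). So one may perturb $\chi$ to a rational character and rescale it to one with values in $\Z$.

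\textbf{Step 3: conclusion.} Restrict the resolution to $H$; it remains a resolution by finitely generated free $k[H]$-modules, with ranks multiplied by $[G:H]$. After tensoring with $\widehat{k[H]}^{\chi}$, the matrices from Steps 1--2 put the differentials into the same block normal form over $\widehat{k[H]}^{\chi}$. Hence in degrees $i\leqslant n$ the homology is free of rank $[G:H]\beta_i^{\Dk{G}}(M)$, which is the claimed formula. The delicate points all sit in Step 2: checking that the identifications commute with the embeddings $k[H]\hookrightarrow\Dk{H}$ and $k[H]\hookrightarrow \widehat{k[H]}^\chi$, and that the invertibility needed for the normal form really holds in the Novikov ring.
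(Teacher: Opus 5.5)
\textbf{Your Steps~1 and~3 match the paper's reduction, but Step~2 is unproved: it is the whole content of the cited theorem.}

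The paper does not reprove this statement. It quotes Okun--Schreve's Theorem~7.1, which is stated for finite chain complexes of finitely generated free $k[G]$-modules. It then only observes that $\Tor_i$ for $i\leqslant n$ is the homology of a free resolution of $M$ truncated at degree $n+1$. Your Steps~1 and~3 carry out exactly this reduction, together with the standard observation that a block normal form over the Novikov ring computes the homology, and they are fine. If at Step~2 you simply invoke the cited theorem, your proposal agrees with the paper. As a self-contained proof, however, Step~2 has a genuine gap: it is Kielak's theorem, and your sketch does not establish it.

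\emph{The target ring is wrong.} $\Dk{H}$ does not map into $\widehat{k[H]}^\chi$. If $1\neq g\in\ker\chi$, then $1+g$ is not a unit of $k[\ker\chi]$, so $(1+g)^{-1}\in\Dk{H}$ has no Novikov expansion. Your claim only makes sense inside a common overring. For integral $\chi$ with $K=\ker\chi$, the twisted Laurent series ring $\Dk{K}((t))$ works: it contains $\Dk{H}$ (by Hughes-freeness) and also $\widehat{k[H]}^\chi=k[K]((t))$.

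\emph{The termination argument does not apply.} ``Supports are finite and $\bigcap G_i=1$'' only makes sense for elements of $k[G]$. The elements of $S$ are nested rational expressions whose intermediate values have infinite support. Moreover, an inversion at level $H$ is the inversion of a $[G:H]\times[G:H]$ matrix over the Novikov ring. For matrices, invertibility is not detected by invertibility of a $\chi$-minimal part.

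\emph{What Step~2 really amounts to.} The right way to organise it is via Cohn's theory (Cramer's rule). Each $s\in S$ is an entry of $A^{-1}$ for a square matrix $A$ over $k[G]$ invertible over $\Dk{G}$. So Step~2 is equivalent to showing that such an $A$, viewed over $k[H]$, becomes invertible over $\widehat{k[H]}^\chi$ for suitable $H$ and $\chi$. That is the theorem itself for the two-term complex $0\to k[G]^m\xrightarrow{A}k[G]^m\to 0$.

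\emph{The openness step needs the same fix.} Openness of invertibility over $\widehat{k[H]}^\chi$ is a statement about matrices over $k[H]$, not over $\Dk{H}$. So your perturbation to an integral character also only works after the Cohn reformulation.

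Proving the two-term case requires controlling how a character chosen on a deep subgroup $G_j$ of the tower interacts with the blown-up matrices at all earlier levels. This is precisely the work done by Kielak, Jaikin-Zapirain and Okun--Schreve. Picking a character generic on the rational abelianisation and descending does not supply it.
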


\begin{rem}
    This is really just a rephrasing of \cite[Theorem 7.1]{OkunSchreve_DawidSimplified}, which is stated in terms of homology of a finite chain complex of finitely generated free $k[G]$-modules. To obtain this form, simply note that $M$ admits a resolution by finitely generated free $k[G]$-modules since it is of type $\FP_\infty$, and that $\Tor_n$ can be computed by truncating this resolution at length $n+1$ (so that it is the homology of a finite chain complex of finitely generated free modules).
\end{rem}

%%%
%%% VANISHING
%%%
\section{Vanishing of higher Novikov and \texorpdfstring{$L^2$}{L²}-homology} \label{sec: vanishing_homology}

We begin by proving that non-vanishing higher Novikov homology obstructs a strengthened form of homological coherence.

\begin{thm}\label{thm: nov vanish}
    Let $R$ be a ring and let $G$ be a group. If every finitely generated subgroup of $G$ is of type $\FP_{n+1}(R)$, then 
    \[
        \H_i \left(G;\widehat{R[G]}^\chi \right) = 0
    \]
    for all $1 < i \leqslant n$ and all integral characters $\chi \colon G \rightarrow \Z$.
\end{thm}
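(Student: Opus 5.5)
The plan is to use the Wang (Lyndon--Hochschild--Serre) sequence for the extension $K=\ker\chi\to G\to\chi(G)$, reducing the statement to the vanishing of $\H_j(K;\widehat{R[G]}^\chi)$ in degrees $1\leqslant j\leqslant n$. That vanishing in turn reduces to finitely generated subgroups $L\leqslant K$, where $\widehat{R[G]}^\chi$ looks like a direct limit of products of free $R[L]$-modules.

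Write $N=\widehat{R[G]}^\chi$. If $\chi=0$, then $N=R[G]$, which is free, so $\H_i(G;N)=0$ for all $i>0$. Otherwise $\chi(G)=d\Z$ for some $d\geqslant1$. Choose $t\in G$ with $\chi(t)=d$ and set $K=\ker\chi$, so that $G=K\rtimes\langle t\rangle$. The Wang sequence of this extension reads
\[
\cdots\to \H_i(K;N)\xrightarrow{\ 1-t_*\ }\H_i(K;N)\to\H_i(G;N)\to\H_{i-1}(K;N)\to\cdots.
\]
So it suffices to show $\H_j(K;N)=0$ for $1\leqslant j\leqslant n$; taking $j=i$ and $j=i-1$ then gives $\H_i(G;N)=0$ for $2\leqslant i\leqslant n$. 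This is exactly where $i>1$ is needed: $\H_0(K;N)$ need not vanish. Since group homology commutes with directed unions of subgroups, we have $\H_j(K;N)=\varinjlim_L \H_j(L;N)$, with $L$ ranging over the finitely generated subgroups of $K$. It therefore suffices to prove $\H_j(L;N)=\Tor_j^{R[L]}(R,N)=0$ for $1\leqslant j\leqslant n$ and each such $L$. By hypothesis each such $L$ is of type $\FP_{n+1}(R)$.

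Fix such an $L$. Every element of $N$ can be written uniquely as $\sum_{m\geqslant -M} t^m a_m$ with $a_m\in R[K]$. The $a_m$ lie in $R[K]$, not a completion, because only finitely many support elements lie in each level set of $\chi$. Since $L\leqslant K$, left multiplication by $L$ acts coefficientwise on each $a_m$ after conjugating past $t^m$, and conjugation by $t^m$ preserves $K$. Hence as a left $R[L]$-module
\[
N\cong\varinjlim_{M}\ \prod_{m\geqslant -M} t^m R[K],
\]
and each $t^mR[K]$ is a free $R[L]$-module (a direct sum of copies of $R[L]$ indexed by cosets). Tor commutes with direct limits and with direct sums. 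The proof of \cref{lem: tor of products} applies verbatim to show that, because the trivial $R[L]$-module $R$ is of type $\FP_{n+1}$, the functor $\Tor_j^{R[L]}(R,-)$ commutes with arbitrary direct products for $0<j\leqslant n$. Indeed, the partial resolution by finitely generated free modules commutes with products, and products are exact. We conclude
\[
\Tor^{R[L]}_j(R,N)\cong\varinjlim_M\prod_{m\geqslant -M}\bigoplus\Tor_j^{R[L]}(R,R[L])=0 \quad\text{for } 1\leqslant j\leqslant n,
\]
which completes the argument.

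I expect the main obstacle to be the middle step: correctly identifying the restriction of $N$ to $R[L]$ as a directed union of products of free modules, while keeping track of left versus right conventions in $\H_*(G;N)=\Tor^{R[G]}_*(R,N)$. That in turn requires checking that the extension of \cref{lem: tor of products} to products of free modules, rather than products of copies of the ring, applies to the resulting objects. The Wang sequence and the passage to direct limits are standard.
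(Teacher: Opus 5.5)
Your proof is correct. It shares the paper's skeleton: a Wang/Lyndon--Hochschild--Serre reduction to the homology of $K=\ker\chi$ with Novikov coefficients, a description of the Novikov ring over the kernel via products of free modules, the fact that $\Tor$ against a module of type $\FP_{n+1}$ commutes with products, and a passage to finitely generated subgroups by direct limits. The middle of the argument is organised differently, however.

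The paper stays over the single ring $R[K]$. It uses the short exact sequence $0\to R[G]\to\widehat{R[G]}^{\chi}\oplus\widehat{R[G]}^{-\chi}\to\prod_{\Z}R[K]\to0$ to reduce to $\H_q(K;\prod R[K])$. It then dimension-shifts through the augmentation ideal and writes $I_{R[K]}$ as a direct limit of induced ideals of finitely generated subgroups, which are of type $\FP_n$ over $R[K]$. The shift turns $\H_1$ into the kernel of the multiplication map $I_{R[K]}\otimes\prod R[K]\to\prod R[K]$, so degree one needs a separate injectivity argument.

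You instead filter the Novikov ring by the submodules of series supported in $\chi^{-1}([-Md,\infty))$, each of which is literally a product of free modules. You also approximate the group rather than the module, via $\H_j(K;N)=\varinjlim_L\H_j(L;N)$. Since the trivial $R[L]$-module is itself of type $\FP_{n+1}$, you get vanishing uniformly for $1\leqslant j\leqslant n$. There is no dimension shift, no special case in degree one, and no need for $\widehat{R[G]}^{-\chi}$. The only cost is that you need the product lemma for products of arbitrary (here free, possibly infinite-rank) modules rather than copies of the ring; as you say, the proof of \cref{lem: tor of products} gives this verbatim.

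The paper's exact sequence, in turn, treats $\chi$ and $-\chi$ simultaneously. It is also reused directly in \cref{prop: nov wd 1}, although your filtration combined with Chase's criterion would work just as well there.
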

\begin{proof}
    The claim is trivial if $\chi$ is the zero character, so from now on we will assume that $\chi$ is surjective and that $t \in G$ is such that $\chi(t) = 1$. Let $N = \ker \chi$. The Lyndon--Hochschild--Serre spectral sequence with coefficients in $\widehat{R[G]}^\chi$ associated to the extension $1 \to N \to G \to \Z \to 1$ is
    \[
        E_{p,q}^2 = \H_p \left(\Z; \H_q \left(N; \widehat{R[G]}^\chi \right)\right) \quad \Rightarrow \quad \H_{p+q}\left(G;\widehat{R[G]}^\chi \right).
    \]
    We will prove that
    \begin{equation}\label{eq: kernel novikov homology}
        \H_q\left(N; \widehat{R[G]}^\chi \right) = 0
    \end{equation}
    for all $0 < q \leqslant n$. This will imply that 
    \[
        \H_p\left(\Z; \H_0\left(N; \widehat{R[G]}^\chi \right)\right) \cong \H_p\left(G; \widehat{R[G]}^\chi \right)
    \]
    for all $p \leqslant n$, and since $\cd_R(\Z) = 1$, this yields the desired result.
    
    We now prove \eqref{eq: kernel novikov homology}. Every element of $\widehat{R[G]}^\chi$ can be written as an infinite series $\sum_{i \in \Z} \lambda_i t^i$, where $\lambda_i \in R[N]$ and there is an $n_0 \in \Z$ such that  $\lambda_i = 0$ for all $i \leqslant n_0$. Hence, there is an inclusion
    \[
        \widehat{R[G]}^\chi \hookrightarrow \prod_{\Z} R[N], \quad \sum_{i \in \Z} \lambda_i t^i \mapsto (\lambda_i)_{i \in \Z}.
    \]
    From now we will write $\prod R[N]$ to denote $\prod_\Z R[N]$. There is another inclusion $\widehat{R[G]}^{-\chi} \hookrightarrow \prod_{\Z} R[N]$ defined similarly, and together, the inclusions induce the short exact sequence
    \[
        0 \to R[G] \to \widehat{R[G]}^\chi \oplus \widehat{R[G]}^{-\chi} \to \prod R[N] \to 0
    \]
    of $R[N]$-modules. Since $R[G]$ is a free $R[N]$-module, $\H_q(N;R[G]) = 0$ for all $q > 0$. Hence, the long exact sequence in the homology of $N$ associated to the above short exact sequence yields inclusions
    \[
        \H_q\left(N; \widehat{R[G]}^\chi \right) \oplus \H_q \left(N;\widehat{R[G]}^{-\chi} \right) \hookrightarrow \H_q \left(N; \prod R[N] \right),
    \]
    for all $q > 0$. The problem is thus reduced to proving that $\H_q(N; \prod R[N]) = 0$ for all $0 < q \leqslant n$.

    Consider the augmentation exact sequence of $R[N]$-modules
    \[
        0 \to I_{R[N]} \to R[N] \to R \to 0,
    \]
    which induces a long exact sequence in the functor $\Tor_q^{R[N]}(-,\prod R[N])$. Since $\Tor_q^{R[N]}(R[N],\prod R[N]) = 0$ for all $q > 0$, there are isomorphisms
    \begin{equation}\label{eq: q>1}
        \H_{q+1}\left(N;\prod R[N]\right) = \Tor_{q+1}^{R[N]}\left(R,\prod R[N]\right) \cong \Tor_q^{R[N]} \left(I_{R[N]}, \prod R[N]\right)
    \end{equation}
    for $q > 0$ and there is an exact sequence
    \begin{equation}\label{eq: q=1}
        0 \to \H_1\left(N; \prod R[N]\right) \to I_{R[N]} \otimes_{R[N]} \prod R[N] \to \prod R[N].
    \end{equation}
    
    Let $\{N_j\}$ be the directed set of all finitely generated subgroups of $N$. Then $N = \varinjlim_j N_j$, and therefore $I_{R[N]} = \varinjlim_j I_{R[N_j]}^N$, where $I_{R[N_j]}^N$ denotes the right ideal of $R[N]$ generated by $I_{R[N_j]}$; it is isomorphic to $I_{R[N_j]} \otimes_{R[N_j]} R[N]$ as a right $R[N]$-module. By assumption, $N_j$ is of type $\FP_{n+1}(R)$, so $I_{R[N_j]}$ is of type $\FP_n$ as an $R[N_j]$-module, and hence $I_{R[N_j]}^N$ is of type $\FP_n$ as an $R[N]$-module. Since Tor functors commute with direct limits, \cref{lem: tor of products} yields
    \[
        \Tor_q^{R[N]} \left(I_{R[N]}, \prod R[N]\right) \cong \varinjlim_j \Tor_q^{R[N]} \left(I_{R[N_j]}^N, \prod R[N]\right)  = 0
    \]
    for all $0 < q < n$. By \eqref{eq: q>1}, we have $\H_q(N;\prod R[N]) = 0$ for all $1 < q \leqslant n$.

    To conclude the proof, we must show that the map 
    \[
        I_{R[N]} \otimes_{R[N]} \prod R[N] \to \prod R[N]
    \]
    from \eqref{eq: q=1} is injective. Suppose that $x \in I_{R[N]} \otimes_{R[N]} \prod R[N]$ lies in the kernel. Since 
    \[
         I_{R[N]} \otimes_{R[N]} \prod R[N] \cong \varinjlim_j I_{R[N_j]}^N \otimes_{R[N]} \prod R[N],
    \]
    there is an index $j$ and an element $x' \in I_{R[N_j]}^N \otimes_{R[N]} \prod R[N]$ such that $x'$ maps to $x$ under the inclusion-induced homomorphism 
    \[
        I_{R[N_j]}^N \otimes_{R[N]} \prod R[N] \to I_{R[N]} \otimes_{R[N]} \prod R[N].
    \]
    The composition
    \[
        I_{R[N_j]}^N \otimes_{R[N]} \prod R[N] \to I_{R[N]} \otimes_{R[N]} \prod R[N] \to \prod R[N]
    \]
    is injective by \cref{lem: tor of products}. Thus, since $x'$ maps to zero under this homomorphism, we must have $x' = 0$. But then $x = 0$, as desired. \qedhere
\end{proof}

\cref{thm: nov vanish} has interesting consequences for the $\Sigma$-invariants of a coherent group. By Sikorav's theorem \cite{SikoravThese} and its higher dimensional versions (see e.g.\ \cite[Theorem 5.3]{Fisher_Improved}), the $i$th $\Sigma$-invariant of a group $G$ of type $\FP_n(R)$ is 
\[
    \Sigma^i(G;R) = \left\{ \chi \in \H^1(G;\R) \smallsetminus \{0\} \ : \ \H_j\left(G; \widehat{R[G]}^\chi\right) = 0 \ \text{for all} \ j \leqslant i \right\},
\]
whenever $i \leqslant n$. We put $\Sigma_\Q^i(G;R) = \Sigma^i(G;R) \cap \H^1(G;\Q)$. The $\Sigma$-invariants control finiteness properties of co-Abelian subgroups. The following is a representative result.

\begin{thm}[{\cite{BieriRenzValutations}}]
    Let $G$ be a group of type $\FP_n(R)$ for some ring $R$ and let $\chi \colon G \to \Z$ be a non-trivial character. Then $\ker \chi$ is of type $\FP_n(R)$ if and only if $\pm \chi \in \Sigma^n(G;R)$.
\end{thm}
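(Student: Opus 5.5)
Since this theorem is quoted from \cite{BieriRenzValutations}, I would not follow Bieri--Renz's valuation-theoretic argument. Instead I would prove it through the Sikorav-type description of $\Sigma^n(G;R)$ recalled just above, so that the statement becomes a comparison between finiteness of $N=\ker\chi$ and the vanishing of $\H_j(G;\widehat{R[G]}^{\pm\chi})$ for $j\leqslant n$.

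Normalise $\chi$ to be surjective and fix $t$ with $\chi(t)=1$. Fix a resolution $F_\bullet\to R$ by finitely generated free $R[G]$-modules in degrees $\leqslant n$. Restricted to $N$, each $F_i$ is a free $R[N]$-module with basis $\{t^k e\}$, and $\widehat{R[G]}^{\chi}\otimes_{R[G]}F_\bullet$ is the completion of $F_\bullet$ in the positive $t$-direction. As in the proof of \cref{thm: nov vanish}, I would use the short exact sequence
\[
0\to R[G]\to \widehat{R[G]}^\chi\oplus\widehat{R[G]}^{-\chi}\to \prod R[N]\to 0 .
\]
This is exact as $R[G]$-modules, with $\prod R[N]$ identified with a coinduced module from $N$.

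\emph{($\Rightarrow$)} Suppose $N$ is of type $\FP_n(R)$. By \cref{lem: tor of products}, applied to $I_{R[N]}$ (which is of type $\FP_{n-1}$), together with the augmentation sequence, we get $\H_q(N;\prod R[N])=0$ for $0<q<n$ and $\H_0(N;\prod R[N])\cong\prod R$. The long exact sequence from the display then gives $\H_q(N;\widehat{R[G]}^{\pm\chi})=0$ for $0<q<n$. The zeroth term $\H_0(N;\widehat{R[G]}^\chi)$ is a Novikov ring of $\Z$ over $R$, on which $t-1$ acts invertibly, so all its $\Z$-homology vanishes. The Lyndon--Hochschild--Serre spectral sequence, as in \cref{thm: nov vanish}, then kills $\H_j(G;\widehat{R[G]}^{\pm\chi})$ for $j<n$.

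The degree $j=n$ is delicate, because $\FP_n$ of $N$ only controls $\Tor$ up to degree $n-1$ in \cref{lem: tor of products}. Here I would argue directly with chain complexes instead. Since $N$ is $\FP_n$, the complex $F_\bullet$ restricted to $N$ is chain homotopy equivalent in degrees $\leqslant n$ to a finite type $R[N]$-complex $C_\bullet$, and $F_\bullet$ is then a mapping torus of a self-map $\phi$ of $C_\bullet$ realising the action of $t$. After Novikov completion, $1-t\phi$ becomes invertible, and the completed mapping torus is acyclic in degrees $\leqslant n$, including degree $n$.

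\emph{($\Leftarrow$)} This is the main obstacle. Suppose $\widehat{R[G]}^{\pm\chi}\otimes F_\bullet$ is acyclic in degrees $\leqslant n$. I would pick a partial chain contraction $s$ over the Novikov ring up to degree $n$. Since only finitely many coefficients of $s$ lie below any given height, $s$ can be truncated to a finite sum $s'$ over $R[G]$ with $\partial s'+s'\partial=1-\epsilon$, where $\epsilon$ strictly raises $\chi$-height on the finitely many basis elements. I would do this on both sides, for $\chi$ and for $-\chi$. Using these $R[G]$-chain maps that push everything up, respectively down, one shows that the $R[N]$-complex $F_\bullet$ (in degrees $\leqslant n$) is dominated by the subcomplex supported in a bounded $\chi$-window. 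That subcomplex is finitely generated over $R[N]$, so Brown's criterion or Ranicki's finite domination gives that $N$ is of type $\FP_n(R)$.

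The key technical points are making the truncation errors $\epsilon$ compatible with $\partial$ in each degree, which I would handle by inducting on the degree, and gluing the $\chi$ and $-\chi$ contractions into a single domination.
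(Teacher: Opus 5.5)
The paper does not prove this statement. It quotes it from Bieri--Renz, whose argument uses their valuation criterion and the monoid rings $R[G_{\pm\chi}]$. Your Novikov-ring route is therefore your own, and I assess it on its merits.

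\textbf{The ($\Rightarrow$) direction.} This is essentially sound, but state it cleanly.
Take $C$ to be a \emph{full} free $R[N]$-resolution of $R$ that is finitely generated through degree $n$, and let $\phi$ be a $t$-semilinear lift of the identity of $R$.
Then $\mathrm{Cone}(1-t\phi)$ on $R[G]\otimes_{R[N]}C$ is itself an $R[G]$-resolution of $R$, so it computes $\H_*(G;\widehat{R[G]}^{\chi})$ outright. You do not need an ``equivalence in degrees $\leqslant n$'', nor the claim that $F_\bullet$ ``is'' a mapping torus.
Over $\widehat{R[G]}^{\chi}$ the map $1-t\phi$ is invertible in degrees $\leqslant n$. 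It is therefore bijective on $\H_{n-1}$ and surjective on $\H_n$, and the long exact sequence of the cone gives acyclicity through degree $n$.
For $-\chi$, do not try to invert $1-t\phi$; rerun the argument with a $t^{-1}$-semilinear lift. Your Lemma-based paragraph then becomes unnecessary.

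\textbf{The gap is in ($\Leftarrow$).} This direction is the substance of the theorem, and you leave it at ``one shows''.
Truncating the Novikov contractions does give $R[G]$-chain maps $\phi_\pm\colon F\to F$ through degree $n$. They are homotopic to the identity, with $\phi_+$ raising and $\phi_-$ lowering $\chi$-height.
However, $\phi_\pm$, their composites and their homotopies are all $R[G]$-linear, so they commute with multiplication by $t$. Their images are $t$-stable, and each moves a basis element $t^ie_j$ into heights $[i+a,i+b]$ with $a,b$ independent of $i$.
So pushing the bottom of a chain up pushes its top up by the same amount. No combination of these maps sends $F|_N$ into a bounded window.
A domination by a window must therefore use height-dependent $R[N]$-linear maps, for example $\phi_+^{m(i)}$ on $t^ie_j$ for $i\ll 0$ and $\phi_-^{m(i)}$ for $i\gg 0$. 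Such maps are not chain maps, because $\partial$ mixes basis elements that receive different numbers of pushes.
Building the correction terms and the homotopy $gf\simeq 1$ is exactly the ``gluing'' you postpone. Separately, a $\chi$-window is a subcomplex only if its width shrinks with degree.

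\textbf{What is needed} is a cut-and-paste argument, for instance verifying Brown's criterion for the windows of $F|_N$:
\begin{enumerate}
\item[(a)] A $k$-cycle in $F^{\geqslant a}$ with $k\leqslant n-1$ bounds in $F^{\geqslant a-\lambda}$. This uses $1-\phi_+^j=\partial H+H\partial$ and the fact that $\phi_+$ raises heights on $F_n$. The symmetric statement holds with $\phi_-$.
\item[(b)] Every $n$-cycle $\zeta$ splits as a cycle in an upper half-space plus a cycle in a lower one. This follows from $\zeta=\phi_+^j\zeta+\partial H\zeta$, since $H$ lands in a finitely generated part of $F_{n+1}$, where $(n+1)$-chains can be cut by height.
\item[(c)] For a window cycle $u=\partial y$, cut $y$ at a height. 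Each half has as boundary a cycle in a bounded band that bounds in one half-space. Step (a) makes it bound in the other half-space, and (b) merges the two fillings into one lying in a window whose size is independent of $u$.
\end{enumerate}
Steps (a) and (b) are where the degree-$n$ vanishing of $\H_n(G;\widehat{R[G]}^{\pm\chi})$ is used: it is what makes $\phi_\pm$ height-monotone on $F_n$.
The uniformity in (c), including in the top degree $n-1$, is exactly what Brown's criterion requires.
Without this argument, or Bieri--Renz's monoid-ring argument, or an $\FP_n$-version of Ranicki's finite-domination theorem for Novikov rings, ($\Leftarrow$) remains unproved.
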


Hence, it is clear that if $\chi \colon G \rightarrow \Z$ satisfies $\pm \chi \in \Sigma^1(G;R)$ but $\chi \notin \Sigma^2(G;R)$, then $G$ is incoherent, and the witness to incoherence is $\ker \chi$. In particular, if $G$ is a coherent group with a symmetric first $\Sigma$-invariant, then $\Sigma_\Q^1(G;R) = \Sigma_\Q^2(G;R)$. \cref{thm: nov vanish} shows that the symmetry requirement is unnecessary, at least in cohomological dimension two.

\begin{cor}\label{cor: BNS of coherent}
    Let $G$ be a group of type $\FP(R)$ and with $\cd_R(G) \leqslant 2$ for some ring $R$. If $G$ is homologically coherent over $R$, then $\Sigma_\Q^1(G;R) = \Sigma_\Q^2(G;R)$.
\end{cor}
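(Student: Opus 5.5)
Since $\Sigma^2 \subseteq \Sigma^1$ always, it suffices to show that every rational character $\chi \in \Sigma_\Q^1(G;R)$ also lies in $\Sigma^2(G;R)$. Both sets are invariant under positive rescaling, so we may replace $\chi$ by a positive multiple and assume it is integral, i.e.\ $\chi \colon G \to \Z$ is a non-trivial character. Because $G$ is of type $\FP(R)$, it is in particular of type $\FP_2(R)$. The Sikorav-type description recalled above therefore applies with $i \leqslant 2$. It says $\chi \in \Sigma^2(G;R)$ if and only if
\[
    \H_j\left(G;\widehat{R[G]}^\chi\right) = 0 \quad \text{for } j = 0,1,2.
\]
The vanishing for $j = 0,1$ is exactly the assumption $\chi \in \Sigma^1(G;R)$. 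So everything reduces to showing that $\H_2(G;\widehat{R[G]}^\chi) = 0$.

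For this we want to apply \cref{thm: nov vanish} with $n = 2$. That theorem needs every finitely generated subgroup of $G$ to be of type $\FP_3(R)$, whereas homological coherence only gives $\FP_2(R)$. The extra degree comes from the dimension bound. Let $H \leqslant G$ be finitely generated. Then $\cd_R(H) \leqslant \cd_R(G) \leqslant 2$, since a projective resolution of $R$ over $R[G]$ restricts to a projective resolution over $R[H]$. By homological coherence $H$ is of type $\FP_2(R)$. The cited fact \cite[Proposition VIII.4.3]{BrownGroupCohomology} then shows that $H$ is of type $\FP(R)$, and in particular of type $\FP_3(R)$.

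Now \cref{thm: nov vanish} with $n = 2$ gives $\H_2(G;\widehat{R[G]}^\chi) = 0$ for every integral character $\chi$. Combined with the reduction above, this shows $\chi \in \Sigma^2(G;R)$, and hence $\Sigma_\Q^1(G;R) = \Sigma_\Q^2(G;R)$.

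The only step that is not immediate bookkeeping is upgrading $\FP_2(R)$ to $\FP_3(R)$ using the dimension bound. After that, the proof is a direct application of \cref{thm: nov vanish}.
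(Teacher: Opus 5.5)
Your proposal is correct and follows the same route as the paper's proof. You use the bound $\cd_R \leqslant 2$ to upgrade $\FP_2(R)$ to $\FP(R)$ for finitely generated subgroups, apply \cref{thm: nov vanish} to get $\H_2(G;\widehat{R[G]}^\chi)=0$ for integral $\chi$, and conclude via the Sikorav-type description of $\Sigma^i$. You also make explicit the rescaling of rational characters to integral ones, which the paper leaves implicit.
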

\begin{proof}
    If a finitely generated subgroup of $G$ is of type $\FP_2(R)$, then it is automatically of type $\FP_\infty(R)$ since $\cd_R(G) \leqslant 2$. Hence, $\H_2(G; \widehat{R[G]}^\chi) = 0$ for all integral characters $\chi \colon G \rightarrow \Z$ by \cref{thm: nov vanish}. The result then immediately follows from the definition of $\Sigma_\Q^i(G;R)$. \qedhere
\end{proof}

If we strengthen the homological coherence assumption on $G$ in \cref{thm: nov vanish} to the assumption that $R[G]$ be coherent, we obtain an even stronger conclusion on the properties of $\widehat{R[G]}^\chi$ for integral characters.

\begin{prop}\label{prop: nov wd 1}
    Let $R$ be a ring and let $G$ be a group. If $R[G]$ is coherent, then $\widehat{R[G]}^\chi$ is of weak dimension at most $1$ as an $R[G]$-module for any integral character $\chi \colon G \rightarrow \Z$.
\end{prop}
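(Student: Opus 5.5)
Following the proof of \cref{thm: nov vanish}, we reduce to a statement about the product $\prod R[N]$, where $N = \ker \chi$, and then apply Chase's criterion (\cref{thm: Chase criterion}).

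\textbf{Reduction.} If $\chi = 0$, then $\widehat{R[G]}^\chi = R[G]$ is flat and there is nothing to prove. So assume $\chi$ is surjective, pick $t$ with $\chi(t) = 1$, and put $N = \ker\chi$. The proof of \cref{thm: nov vanish} gives a short exact sequence of $R[N]$-modules
\[
    0 \to R[G] \to \widehat{R[G]}^\chi \oplus \widehat{R[G]}^{-\chi} \to \prod R[N] \to 0.
\]
Since $R[G]$ is a free $R[N]$-module, for every right $R[N]$-module $L$ the long exact Tor sequence gives
\[
    \Tor_i^{R[N]}\bigl(L,\widehat{R[G]}^\chi\bigr) \hookrightarrow \Tor_i^{R[N]}\bigl(L,\textstyle\prod R[N]\bigr) \quad \text{for } i \geqslant 1 .
\]

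\textbf{Flatness over $R[N]$.} Next I would show that $R[N]$ is coherent. Since $R[G]$ is free over $R[N]$, a finitely generated left ideal $J \leqslant R[N]$ induces $R[G] \otimes_{R[N]} J = R[G]J$. This is a finitely generated ideal of $R[G]$, hence finitely presented by coherence of $R[G]$. Because $R[G]$ is a faithfully flat $R[N]$-module (a direct sum of copies of $R[N]$), finite presentation descends to $J$. By \cref{thm: Chase criterion}, $\prod R[N]$ is then a flat $R[N]$-module, so the injections above force $\widehat{R[G]}^\chi$ to be flat over $R[N]$.

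\textbf{Transfer from $N$ to $G$.} The remaining step, and the one I expect to be the main obstacle, is passing from $R[N]$-flatness to weak dimension at most $1$ over $R[G]$. I would use the change-of-rings relation $R[G] = R[N][t^{\pm1}]$, a skew Laurent extension. For a right $R[G]$-module $M$ and left $R[G]$-module $V$ there is a short exact sequence of $R[G]$-bimodule type
\[
    0 \to M \otimes_{R[N]} R[G] \xrightarrow{\;1 - t\;} M \otimes_{R[N]} R[G] \to M \to 0
\]
(the standard resolution for extensions by $\Z$, as in the Lyndon--Hochschild--Serre argument). Applying $- \otimes_{R[G]} \widehat{R[G]}^\chi$ and using $\Tor_i^{R[G]}\bigl(M \otimes_{R[N]} R[G], V\bigr) \cong \Tor_i^{R[N]}(M, V)$ (Shapiro), the long exact sequence gives
\[
    \Tor_i^{R[G]}\bigl(M,\widehat{R[G]}^\chi\bigr) = 0 \quad \text{for } i \geqslant 2
\]
from the $R[N]$-flatness. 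Hence $\widehat{R[G]}^\chi$ has weak dimension at most $1$. The statement in \cref{thm: novikov} with vanishing for all $i > 0$ would need additionally that $1 - t$ acts injectively on $\Tor_0$, which uses that $1 - t$ is invertible in $\widehat{R[G]}^\chi$. I would check this separately, but it is not required for the proposition as stated.
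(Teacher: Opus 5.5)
Your proof is correct and is essentially the paper's. You use the same short exact sequence plus Chase's criterion to get flatness of $\widehat{R[G]}^\chi$ over $R[N]$, and your descent argument supplies the coherence of $R[N]$, which the paper simply asserts. Your two-term sequence is the unrolled form of the paper's Lyndon--Hochschild--Serre spectral sequence for $N\to G\to\Z$. Its first map should be written $m\otimes x\mapsto m\otimes x-mt\otimes t^{-1}x$, since right multiplication by $1-t$ is not $R[G]$-linear.

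Drop the closing aside, though, because $\Tor_1$ does not vanish in general. Take $G=F_2=\langle a,b\rangle$ and $\chi\neq 0$. Then one of $a-1$, $b-1$ is a unit in the Novikov ring, so $\H_1(G;\widehat{\Q[G]}^\chi)\cong\widehat{\Q[G]}^\chi\neq 0$. So the ``$i>0$'' in the introduction is a misprint for $i>1$, and no invertibility of $1-t$ can give vanishing in degree one.
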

\begin{proof}
    We must prove that for any right $R[G]$-module $M$, we have
    \[
        \Tor_i^{R[G]}\left(M,\widehat{R[G]}^\chi\right) = 0
    \]
    for all $i > 1$. Let $N = \ker \chi$. There is a spectral sequence
    \[
        E_{p,q}^2 = \H_p\left(\Z; \Tor_q^{R[N]}\left(M, \widehat{R[G]}^\chi\right)\right) \quad \Rightarrow \quad \Tor_{p+q}^{R[G]}\left(M, \widehat{R[G]}^\chi\right),
    \]
    so it will suffice to prove that $\Tor_q^{R[N]}(M, \widehat{R[G]}^\chi) = 0$ for all $q > 0$, since $\cd_R(\Z) = 1$.
    
    As in the proof of \cref{thm: nov vanish}, we have a short exact sequence
    \[
        0 \to R[G] \to \widehat{R[G]}^\chi \oplus \widehat{R[G]}^{-\chi} \to \prod R[N] \to 0
    \]
    where the product is indexed over $\Z$. Since $R[N]$ is coherent, $\prod R[N]$ is flat as an $R[N]$-module by Chase's criterion (\cref{thm: Chase criterion}). But so is $R[G]$, being a free $R[N]$-module, so the long exact sequence in $\Tor^{R[N]}(M,-)$ associated to the short exact sequence above immediately yields
    \[
        \Tor_q^{R[N]}\left(M, \widehat{R[G]}^\chi\right) = 0
    \]
    for all $q > 0$, as desired. \qedhere
\end{proof}

Leveraging the relationship between Novikov homology and $L^2$-homology (and its positive characteristic analogues), we obtain the following result.

\begin{thm}\label{thm: coherence and L2 Betti}
    Let $k$ be a field and let $G$ be a RFRS group.
    \begin{enumerate}
        \item\label{item: all subgps FPinf} If every finitely generated subgroup of $G$ is of type $\FP_{n+1}(k)$, then
        \[
            b_i^{\mathcal{D}_{k[G]}}(G) = 0
        \]
        for all $1 < i \leqslant n$.
        \item\label{item: gp alg coh weak dim one} If $k[G]$ is coherent, then $\Dk{G}$ is of weak dimension at most $1$ as a $k[G]$-module.
        \item\label{item: finite cd subnormal} If $G$ is of type $\FP(k)$ and every finitely generated subgroup of $G$ is of type $\FP(k)$, then there is a subnormal series
        \[
            G_n \trianglelefteqslant G_{n-1} \trianglelefteqslant \cdots \trianglelefteqslant G_1 \trianglelefteqslant G_0 = G
        \]
        such that $G_n$ is free and every consecutive quotient $G_i/G_{i+1}$ is cyclic or finite. In particular, $\Dk{G}$ is of weak dimension at most $1$ as a $k[G]$-module.
    \end{enumerate}
\end{thm}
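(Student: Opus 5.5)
For \ref{item: all subgps FPinf}, I would first reduce to finitely generated $G$ and then combine \cref{thm: L2 Novikov} with \cref{thm: nov vanish}.

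For the reduction, write $G=\varinjlim H$ over its finitely generated subgroups $H$. Homology commutes with direct limits, so $\H_i(G;\Dk G)=\varinjlim_H \H_i(H;\Dk G)$. By the Hughes-free property, the division closure of $k[H]$ in $\Dk G$ is $\Dk H$. Since $\Dk G$ is flat over the division ring $\Dk H$, we get
\[
\H_i(H;\Dk G)\cong \H_i(H;\Dk H)\otimes_{\Dk H}\Dk G .
\]
So it suffices to show $b_i^{\Dk H}(H)=0$ for every finitely generated $H\leqslant G$.

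Such an $H$ is RFRS and of type $\FP_{n+1}(k)$, so the trivial module $k$ is of type $\FP_{n+1}$. By \cref{thm: L2 Novikov} there is a finite-index $H'\leqslant H$ and a character $\chi\colon H'\to\Z$ with
\[
\H_i\bigl(H';\novk{H'}{\chi}\bigr)\cong\bigl(\novk{H'}{\chi}\bigr)^{[H:H']\,b_i^{\Dk H}(H)}\quad\text{for } i\leqslant n .
\]
Every finitely generated subgroup of $H'$ is a finitely generated subgroup of $G$, hence of type $\FP_{n+1}(k)$. So \cref{thm: nov vanish} applied to $H'$ shows the left side vanishes for $1<i\leqslant n$. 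If $\chi=0$, the Novikov ring is just $k[H']$ and the vanishing is immediate. Since the Novikov ring is non-zero, $b_i^{\Dk H}(H)=0$ for $1<i\leqslant n$.

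For \ref{item: gp alg coh weak dim one}, I would show $\Tor_i^{k[G]}(M,\Dk G)=0$ for $i\geqslant2$ and every right $k[G]$-module $M$, by the same reduction pattern.
\begin{enumerate}
\item Tor commutes with direct limits, and every module is a direct limit of finitely presented ones, so we may assume $M$ is finitely presented.
\item Since $k[G]$ is coherent, finitely presented modules are of type $\FP_\infty$.
\item A finitely presented $M$ is induced, $M=M_H\otimes_{k[H]}k[G]$, from a finitely presented $k[H]$-module $M_H$ for some finitely generated $H\leqslant G$. The ring $k[H]$ is coherent, because $k[G]$ is free, hence faithfully flat, over $k[H]$.
\item Shapiro's lemma and flatness of $\Dk G$ over $\Dk H$ give $\Tor_i^{k[G]}(M,\Dk G)\cong\Tor_i^{k[H]}(M_H,\Dk H)\otimes_{\Dk H}\Dk G$.
\end{enumerate}
Now apply \cref{thm: L2 Novikov} to $M_H$ with $n$ as large as needed. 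This gives $H'$ and $\chi$ with $\Tor_i^{k[H']}(M_H,\novk{H'}{\chi})$ free of rank $[H:H']\,\beta_i^{\Dk H}(M_H)$. The ring $k[H']$ is coherent, so \cref{prop: nov wd 1} kills this Tor for $i\geqslant2$. Hence $\beta_i^{\Dk H}(M_H)=0$ for $i\geqslant 2$, as required.

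For \ref{item: finite cd subnormal}, I would induct on $d=\cd_k(G)$.
\begin{enumerate}
\item For $d\leqslant1$: $G$ is torsion-free (being RFRS) and of type $\FP(k)$, so it is free by Stallings--Swan--Dunwoody.
\item For $d\geqslant2$: part \ref{item: all subgps FPinf}, with $n$ arbitrary, gives $b_d^{\Dk G}(G)=0$.
\item I would then invoke the first author's theorem relating top-degree $\Dk G$-Betti numbers to kernels of characters. For RFRS $G$ of type $\FP(k)$ with $\cd_k G=d$, the vanishing $b_d^{\Dk G}(G)=0$ yields a finite-index normal subgroup $G_1$ and a character $G_1\to\Z$ whose kernel $G_2$ has $\cd_k G_2\leqslant d-1$.
\item This gives $G_2\trianglelefteqslant G_1\trianglelefteqslant G$ with quotients cyclic and finite, and $G_2$ inherits the property that all its finitely generated subgroups are of type $\FP(k)$.
\end{enumerate}

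The main obstacle is that $G_2$ need not be finitely generated, hence not of type $\FP(k)$, so the induction hypothesis does not apply verbatim. I would try first to run the induction for not necessarily finitely generated RFRS groups whose finitely generated subgroups are of type $\FP(k)$. Part \ref{item: all subgps FPinf} already holds in that generality via the direct-limit reduction, so what remains is to extend the kernel-dimension theorem to such direct limits. If that cannot be done, I would instead choose $\chi$ in an open cone, using Kielak's fibring, so that the kernel is finitely generated.

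For the weak-dimension consequence: $\Dk{G_n}$ has weak dimension at most $1$ because $G_n$ is free. Each cyclic step is a twisted Laurent/Ore localisation, $\Dk{G_i}$ being the Ore localisation of $\Dk{G_{i+1}}[t^{\pm1}]$ (or of $\Dk{G_{i+1}}*(G_i/G_{i+1})$ in the finite case). Each such step is flat, so the weak dimension bound of $1$ is preserved up the series.
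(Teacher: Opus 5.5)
Your parts (1) and (2) are correct and follow the paper's argument: reduce to finitely generated subgroups, then combine \cref{thm: L2 Novikov} with \cref{thm: nov vanish} (for (1)) or with \cref{prop: nov wd 1} (for (2)). In (2) you differ in two small ways, both fine. You induce a finitely presented $M$ up from a finitely generated subgroup, instead of using Gr\"ater's strong Hughes-freeness to write $\Dk G$ as a direct limit. You also supply the fact, implicit in the paper, that coherence of $k[G]$ passes to $k[H]$. Your deduction of the weak-dimension statement in (3) from a subnormal series is also fine.

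The gap is the existence of the subnormal series in (3). The cd-lowering theorem you invoke is for groups of type $\FP(k)$; it rests on \cref{thm: L2 Novikov}, which needs finite generation. After one step the kernel $G_2$ is typically infinitely generated. Extending that theorem ``to such direct limits'' is exactly the non-trivial content, and you give no argument for it.

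Your fallback cannot work in general. A character with finitely generated kernel on a finite-index subgroup forces the first $L^2$-Betti number to vanish, and the hypotheses do not imply this. For example, $G=F_2*\Z^3$ is a right-angled Artin group of type $\FP$ with $\cd G=3$, and by Kurosh all its finitely generated subgroups are of type $\FP$. But $\b{1}(G)=2$, so no finite-index subgroup of $G$ is (finitely generated)-by-$\Z$. Every cd-lowering kernel is therefore an infinitely generated group of cohomological dimension $2$, and your induction would have to be applied to it.

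The paper does not re-run such an induction. The hypothesis of (3) passes to all subgroups, and (1) holds without finite generation, as you yourself observe. So it gets $b_i^{\Dk H}(H)=0$ for every subgroup $H\leqslant G$, finitely generated or not, and all $i>1$. It then quotes \cite[Theorem 5.7]{Fisher_NovikovCohomology}, which takes precisely this input and produces the series. To close the gap you must either cite that result or prove a cd-lowering statement that applies to such infinitely generated kernels.
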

\begin{proof}
    \ref{item: all subgps FPinf} Let $\{G_\lambda\}_{\lambda \in \Lambda}$ be the directed set of all finitely generated subgroups of $G$, so that $G = \varinjlim_\lambda G_\lambda$. We begin by proving that
    \[
        \H_i\left(G_\lambda; \Dk{G_\lambda}\right) = 0
    \]
    for all $1 < i \leqslant n$ and all $\lambda \in \Lambda$. Since $G_\lambda$ is of type $\FP_{n+1}(k)$, we can apply \cref{thm: L2 Novikov} to obtain a finite-index subgroup $H_\lambda \leqslant G_\lambda$ and a non-trivial integral character $\chi \colon H_\lambda \to \Z$ such that
    \[
        \H_i\left( H_\lambda ; \novk{H_\lambda}{\chi} \right) = \left(\novk{H_\lambda}{\chi}\right)^{[G_\lambda : H_\lambda]  b_i^{\Dk{G_\lambda}}(G_\lambda)}
    \]
    for all $i \leqslant n$. But we know, from \cref{thm: nov vanish}, that $\H_i( H_\lambda ; \novk{H_\lambda}{\chi}) = 0$ for all $1 < i \leqslant n$, so we must have $b_i^{\Dk{G_\lambda}}(G_\lambda) = 0$ for all $1 < i \leqslant n$, proving the claim.

    Now, $I_{k[G]} = \varinjlim I_{k[G_\lambda]}^G$, so for all $0 < i \leqslant n - 1$ we have
    \begin{align*}
        \H_{i+1}(G;\Dk{G}) &= \Tor_i^{k[G]}(I_{k[G]},\Dk{G}) \\
        &= \varinjlim \Tor_i^{k[G]}(I_{k[G_\lambda]}^G,\Dk{G}) \\
        &= \varinjlim \Tor_i^{k[G_\lambda]}(I_{k[G_\lambda]},\Dk{G}) \\
        &= \varinjlim \Tor_i^{k[G_\lambda]}(I_{k[G_\lambda]},\Dk{G_\lambda}) \otimes_{\Dk{G_\lambda}} \Dk{G} \\
        &= \varinjlim \H_{i+1}(G_\lambda;\Dk{G_\lambda}) \otimes_{\Dk{G_\lambda}} \Dk{G} \\
        &= 0
    \end{align*}
    using Shapiro's lemma, and the facts that Tor commutes with direct limits and $\Dk{G}$ is flat over $\Dk{G_\lambda}$. This proves that $b_i^{\Dk{G}}(G) = 0$ for all $1 < i \leqslant n$.

    \smallskip

    \ref{item: gp alg coh weak dim one} Let $M$ be an arbitrary $k[G]$-module; our goal is to show that 
    \[
        \Tor_i^{k[G]}\left(M,\Dk{G}\right) = 0
    \]
    for all $i > 1$. The following claim reduces the problem to the case where $G$ is finitely generated, so that we may apply \cref{thm: L2 Novikov}).
    
    \begin{claim}
        If $\Tor_i^{k[G_0]}(M,\Dk{G_0}) = 0$ for every finitely generated subgroup $G_0$ of $G$, then $\Tor_i^{k[G]}(M,\Dk{G}) = 0$.
    \end{claim}
    \begin{proof}
        Gr\"ater shows that $\Dk{G}$ satisfies the strong Hughes-free property \cite[Corollary 8.3]{Grater20}, which means that the multiplication map $k[G] \otimes_{k[G_0]} \Dk{G_0} \to \Dk{G}$ is injective for all subgroups $G_0 \leqslant G$. Hence, $\Dk{G} = \varinjlim k[G] \otimes_{k[G_0]}\Dk{G_0}$ as a $k[G]$-module where the directed set ranges all finitely generated subgroups $G_0$ of $G$. By Shapiro's Lemma, this implies that
        \[
            \Tor_i^{k[G]}(M, \Dk{G}) \cong \varinjlim \Tor_i^{k[G_0]}(M, \Dk{G_0}) = \{0\},
        \]
        as desired. \renewcommand\qedsymbol{$\diamond$}\qedhere
    \end{proof}
    
    From now on, we assume that $G$ is finitely generated. Write $M = \varinjlim_\lambda M_\lambda$, where $\{M_\lambda\}_{\lambda \in \Lambda}$ is a directed system of finitely presented $k[G]$-modules. We will show  that 
    \[
        \Tor_i^{k[G]}(M_\lambda, \Dk{G}) = 0
    \]
    for all $i > 1$ and $\lambda \in \Lambda$. Since $k[G]$ is coherent, $M_\lambda$ is of type $\FP_\infty$, so we can apply \cref{thm: L2 Novikov} to find a finite-index subgroup $H \leqslant G$ such that 
    \[
        \Tor_i^{k[H]}\left(M_\lambda, \novk{H}{\chi}\right) = \left(\novk{H}{\chi}\right)^{[G : H] \beta_i^{\Dk{G}}(M_\lambda)}.
    \]
    But $\novk H \chi$ is of weak dimension at most $1$ as a $k[H]$-module by \cref{prop: nov wd 1}, so the modules in the above line must be zero for $i > 1$, and therefore we get $\Tor_i(M_\lambda, \Dk{G}) = 0$ for all $i > 0$, as claimed.

    Finally,
    \[
        \Tor_i^{k[G]}(M, \Dk{G}) \cong \varinjlim_\lambda \Tor_i^{k[G]}(M_\lambda, \Dk{G}) = 0,
    \]
    so $\Dk{G}$ is of weak dimension at most $1$ as a $k[G]$-module.

    \smallskip

    \ref{item: finite cd subnormal} Since the property ``all finitely generated subgroups are of type $\FP_\infty(k)$" passes to subgroups, item \ref{item: all subgps FPinf} implies that $b_i^{\Dk{H}}(H)$ vanishes for all subgroups $H \leqslant G$ and all $i > 1$. The desired conclusions then follow from \cite[Theorem 5.7]{Fisher_NovikovCohomology}. \qedhere
\end{proof}

In dimension two, we obtain our main theorem, which gives a complete characterisation of coherence in the class of RFRS groups. To obtain \cref{thm: A} from the introduction as a corollary of this result, take $k = \Q$ and note that coherence and vanishing of $L^2$-Betti numbers are commensurability invariants. 

\begin{cor}\label{cor: hom coherence implies vanishing}
    Let $G$ be a RFRS group and let $k$ be a field with $\cd_k(G) \leqslant 2$. Then $G$ is homologically coherent if and only if $b_2^{\Dk{G}}(G) = 0$.
\end{cor}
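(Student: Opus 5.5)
The plan is to prove the two directions separately, with the forward direction coming from \cref{thm: coherence and L2 Betti}\ref{item: all subgps FPinf} and the reverse direction from the theorem of Jaikin-Zapirain and Linton.

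\emph{Forward direction.} Suppose $G$ is homologically coherent over $k$, so every finitely generated subgroup $H \leqslant G$ is of type $\FP_2(k)$. Since $\cd_k(H) \leqslant \cd_k(G) \leqslant 2$, the remark after the definition of dimensions (Brown, Proposition VIII.4.3) shows that $H$ is of type $\FP(k)$. In particular $H$ is of type $\FP_3(k)$. Applying \cref{thm: coherence and L2 Betti}\ref{item: all subgps FPinf} with $n = 2$ then gives $b_2^{\Dk{G}}(G) = 0$. Note that \cref{thm: coherence and L2 Betti} does not require $G$ itself to be finitely generated, since its proof passes to the directed system of finitely generated subgroups. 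So this direction is essentially immediate.

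\emph{Reverse direction.} Suppose $b_2^{\Dk{G}}(G) = 0$ and let $H \leqslant G$ be finitely generated. I want to show $H$ is of type $\FP_2(k)$, equivalently that $I_{k[H]}$ is finitely presented. Two facts are needed.
\begin{enumerate}
\item The Betti number passes to subgroups: $b_2^{\Dk{H}}(H) = 0$. Since $\cd_k(G) \leqslant 2$, the group $\H_2(G;\Dk{G})$ is the kernel of the top differential $P_2 \otimes \Dk{G} \to P_1 \otimes \Dk{G}$. Restricting a projective resolution of $G$ to $H$ and using that $\Dk{G}$ is flat over $\Dk{H}$ (the strong Hughes-free property cited from Gr\"ater) yields an injection $\H_2(H;\Dk{H}) \otimes_{\Dk{H}} \Dk{G} \hookrightarrow \H_2(G; k[G] \otimes_{k[H]} \Dk{H})$. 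The second group in turn embeds into $\H_2(G;\Dk{G})$, because $\H_2$ is left exact in top degree and $k[G]\otimes_{k[H]}\Dk{H} \to \Dk{G}$ is injective.
\item The Jaikin-Zapirain--Linton theorem \cite[Theorem 1.2]{JaikinLinton_coherence} applies to $H$. That theorem says a locally indicable group of cohomological dimension $2$ with vanishing second $\Dk{}$-Betti number is homologically coherent. The group $H$ is RFRS and hence locally indicable, and $\cd_k(H) \leqslant 2$.
\end{enumerate}

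The main obstacle is the reverse direction: checking that the cited Jaikin-Zapirain--Linton result holds over an arbitrary field $k$, for $\cd_k$ rather than $\cd$, and in the generality needed (possibly non-finitely-generated ambient group). I would try the following.
\begin{enumerate}
\item Apply their argument to the finitely generated group $H$ directly, via the monotonicity above. I expect their proof to go through the Hughes-free division ring $\Dk{H}$ and the vanishing of $\beta_1^{\Dk{}}$ of the relevant relation modules, which is field-agnostic.
\item If their statement is only over $\Q$, rerun their argument. Its key input is that for locally indicable $H$ with $b_2^{\Dk{H}}(H)=0$ and $\cd_k \leqslant 2$, the augmentation ideal of any finitely generated subgroup is a finitely generated submodule of a module of weak dimension $\leqslant 1$ against $\Dk{H}$. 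That property follows from the Hughes-free embedding and carries over verbatim to arbitrary $k$.
\end{enumerate}
The cases $\cd_k(G)\leqslant 1$ are trivial, since then $G$ is locally free and both sides of the equivalence hold.
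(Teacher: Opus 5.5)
Your forward direction is the paper's argument verbatim.

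For the converse you take a genuinely different route. The paper argues structurally. It uses \cite[Corollary 3.9]{Fisher_freebyZ}, a virtual-fibring result that relies essentially on the RFRS hypothesis, to conclude from $b_2^{\Dk{G}}(G)=0$ that $G$ is virtually free-by-cyclic. Feighn--Handel then gives genuine coherence, which is more than the statement asks for. You argue homologically instead: vanishing in top degree passes to every subgroup, and the Jaikin-Zapirain--Linton finiteness criterion turns $b_2^{\Dk{H}}(H)=0<\infty$ into type $\FP_2(k)$ for each finitely generated $H$. This is exactly the implication (5)$\Rightarrow$(3) of \cref{cor: equivalences}. There the paper cites \cite[Corollary 3.5]{JaikinLinton_coherence} over an arbitrary field with $\cd_k\leqslant 2$, so the obstacle you flag is settled by citation. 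Your fallback sketch, which is too vague to stand on its own, is therefore unnecessary. The trade-off: your converse needs only local indicability and Hughes-freeness, but it yields only homological coherence and no free-by-cyclic structure.

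There is one slip in your monotonicity step. The claimed injection $\H_2(H;\Dk{H})\otimes_{\Dk{H}}\Dk{G}\hookrightarrow \H_2(G;k[G]\otimes_{k[H]}\Dk{H})$ is not right, because by Shapiro's lemma the target is just $\H_2(H;\Dk{H})$. The correct chain is $\H_2(H;\Dk{H})\cong \H_2(G;k[G]\otimes_{k[H]}\Dk{H})\hookrightarrow \H_2(G;\Dk{G})$. It uses two facts:
\begin{enumerate}
\item $\H_2$ is left exact when $\cd_k(G)\leqslant 2$;
\item by Gr\"ater, the map $k[G]\otimes_{k[H]}\Dk{H}\to\Dk{G}$ is injective.
\end{enumerate}
Flatness of $\Dk{G}$ over the division ring $\Dk{H}$ is automatic and plays no role. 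This chain gives vanishing rather than a dimension bound, which is all you need.
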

\begin{proof}
    If $G$ is homologically coherent, then every finitely generated subgroup of $G$ is of type $\FP(k)$, since $\cd_k(G) \leqslant 2$. Hence, item \ref{item: all subgps FPinf} of \cref{thm: coherence and L2 Betti} implies that $\H_2(G;\Dk{G}) = 0$. Conversely, if $\H_2(G;\Dk{G}) = 0$, then $G$ is virtually free-by-cyclic by \cite[Corollary 3.9]{Fisher_freebyZ}, and hence (homologically) coherent by \cite{FeighnHandel_FreeByZCoherent}. \qedhere
\end{proof}

As shown by Linton in the appendix (see \cref{lem:type_F} and \cref{rem:v_special}), if $G$ is virtually compact special and homologically coherent over $\Q$, then all finitely generated subgroups of $G$ are of type $\FP(\Q)$. The compact special hypothesis is not needed, since any finitely generated special group embeds into a finitely generated right-angled Artin group. A right-angled Artin group has a hierarchy inducing the required hierarchy on its finitely generated subgroups, so \cref{rem:v_special} still applies. We thus obtain the following structural result for coherent special groups.

\begin{cor}\label{cor: coherent special}
    Let $G$ be a finitely generated virtually special group. If $G$ is homologically coherent over $\Q$, then there is a subnormal series 
    \[
        G_n \trianglelefteqslant G_{n-1} \trianglelefteqslant \cdots \trianglelefteqslant G_1 \trianglelefteqslant G_0 = G
    \]
    where $G_n$ is free and every consecutive quotient $G_i/G_{i+1}$ is either cyclic or finite.
\end{cor}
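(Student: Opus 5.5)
The plan is to reduce to \cref{thm: coherence and L2 Betti}\ref{item: finite cd subnormal}, applied to a suitable RFRS subgroup of finite index, and then transport the series back up to $G$.

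\emph{Step 1.} Since $G$ is finitely generated and virtually special, it has a finite-index subgroup $G'$ that is special and finitely generated. Such a $G'$ embeds in a finitely generated right-angled Artin group, which is RFRS, so $G'$ is RFRS. Homological coherence over $\Q$ passes to subgroups, so $G'$ is homologically coherent over $\Q$.

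\emph{Step 2.} We need the hypotheses of \cref{thm: coherence and L2 Betti}\ref{item: finite cd subnormal} for $G'$: namely, that $G'$ is of type $\FP(\Q)$ and that every finitely generated subgroup of $G'$ is of type $\FP(\Q)$. This is exactly what Linton's \cref{lem:type_F} and \cref{rem:v_special} provide, as discussed just before the statement. The right-angled Artin group containing $G'$ has finite cohomological dimension and a hierarchy, and this hierarchy induces one on every finitely generated subgroup $H \leqslant G'$. Combined with the $\FP_2(\Q)$ property coming from homological coherence, this upgrades $H$ to type $\FP(\Q)$; in particular $G'$ itself is of type $\FP(\Q)$. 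Applying \cref{thm: coherence and L2 Betti}\ref{item: finite cd subnormal} with $k=\Q$ then gives a subnormal series $G'_m \trianglelefteqslant \cdots \trianglelefteqslant G'_0 = G'$ with $G'_m$ free and all consecutive quotients cyclic or finite.

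\emph{Step 3.} Replace $G'$ by its normal core $G'' = \bigcap_{g} gG'g^{-1}$ in $G$. This is still special and of finite index, so Steps 1 and 2 apply to it verbatim. We obtain the series
\[
    G''_m \trianglelefteqslant \cdots \trianglelefteqslant G'' \trianglelefteqslant G,
\]
where the quotient $G/G''$ is finite. This is a series of the required form.

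The main obstacle is Step 2: upgrading from $\FP_2(\Q)$ to $\FP(\Q)$ for finitely generated subgroups of a special group that need not be compact special. This is where the appendix input is essential. The rest is formal.
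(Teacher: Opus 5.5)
Your proposal is correct and follows the paper's argument: pass to a finite-index special subgroup, which is RFRS because it embeds in a finitely generated right-angled Artin group. Then use the RAAG hierarchy with \cref{lem:type_F} and \cref{rem:v_special} to upgrade homological coherence to type $\FP(\Q)$ for all finitely generated subgroups, apply \cref{thm: coherence and L2 Betti}\ref{item: finite cd subnormal}, and append the finite quotient. Your normal-core step simply spells out the passage from the finite-index subgroup back up to $G$, which the paper leaves implicit.
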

\begin{proof}
    This follows from the discussion in the paragraph preceding the corollary and an application of \cref{thm: coherence and L2 Betti}\ref{item: finite cd subnormal}. \qedhere
\end{proof}

\cref{cor: coherent special} motivates the following conjecture, which proposes a classification of coherence in the class of locally indicable groups (compare \cite[Conjecture 1.4]{Fisher_NovikovCohomology}).

\begin{conj}\label{conj: LI coherence}
    Let $G$ be a virtually locally indicable group. The following are equivalent:
    \begin{enumerate}
        \item\label{item: LI coh} $G$ is coherent;
        \item\label{item: LI FPinf coh} every finitely generated subgroup of $G$ is of type $\FP_\infty(\Q)$;
        \item\label{item: LI all L2 vanish} every subgroup $H \leqslant G$ satisfies $\b{n}(H) = 0$ for all $n > 1$.
    \end{enumerate}
\end{conj}

Thus, for virtually RFRS groups, the implication \ref{item: LI FPinf coh} $\Rightarrow$ \ref{item: LI all L2 vanish} holds by \cref{thm: coherence and L2 Betti}, and additionally for special groups the implication \ref{item: LI coh} $\Rightarrow$ \ref{item: LI FPinf coh} holds by \cref{lem:type_F} and \cref{rem:v_special} (as well as the remarks preceding \cref{cor: coherent special}). The conjecture for virtually RFRS groups of cohomological dimension at most two is completely resolved in \cref{cor: equivalences}.

We conclude the section with two examples. First we mention a potential limitation of this method of proving incoherence, even in the class of locally indicable groups of cohomological dimension two. A group $G$ is \emph{free-by-free} if it fits into an extension $1 \to F_m \to G \to F_n \to 1$ for some integers $m,n \geqslant 0$, where $F_n$ is the free group on $n$ generators. A free-by-free group has \emph{excessive homology} if $b_1(G) > n$. In \cite[Theorem 4.1]{KrophollerVidussiWalsh_SurfaceBySurface}, Kropholler, Vidussi, and Walsh give an example of a free-by-free group such that all its subgroups of finite index have no excessive homology. While free-by-free groups are expected to be incoherent \cite[Problem 24]{Wise_anInvitation}, this example shows that Novikov homology of finite-index subgroups cannot detect this.

\begin{ex}
    Let $G = F_m \rtimes F_n$ be a free-by-free group with no excessive homology and let $k$ be a field. We claim that 
    \[
        \H_2\left(G; \novk G \chi\right) = 0
    \]
    for all integral characters $\chi \colon G \to \Z$.

    Fix an integral character $\chi \colon G \rightarrow \Z$. The Lyndon--Hochschild--Serre spectral sequence associated to the extension $1 \to F_m \to G \to F_n \to 1$ has the form
    \[
        E_{p,q}^2 = \H_p\left(F_n; \H_q\left(F_m; \novk G \chi\right)\right) \quad \Rightarrow \quad \H_{p+q}\left(G; \novk G \chi\right).
    \]
    We show that $\H_q(F_m; \novk G \chi) = 0$ for all $q > 0$, which will imply the result, since $\cd_k(F_n) = 1$. Note we only need to check this for $q = 1$, since $\cd_k(F_m) = 1$.

    Since $G$ has no excessive homology, $\chi$ must factor through $G \rightarrow F_n$, and therefore $N = \ker \chi$ contains $F_m$. As in the proof of \cref{thm: nov vanish}, it is enough to prove that 
    \[
        \H_1\left(F_m; \prod k[N]\right) = 0.
    \]
    Since $k[N]$ is flat as a $k[F_m]$-module and $k[F_m]$ is coherent (in fact it has the much stronger property of being a free ideal ring \cite{cohn06FIR}), it follows that $\prod k[N]$ is flat as a $k[F_m]$-module (\cref{thm: Chase criterion}), and we obtain the claimed vanishing.
\end{ex}

Next, we show that while vanishing higher $L^2$-homology characterises coherence in the class of two-dimensional RFRS groups, there are incoherent solvable RFRS groups of infinite cohomological dimension. Such groups have $\mathcal D_{\Q[G]}$ of weak dimension zero as a $\Q[G]$-module, and therefore all the $L^2$-Betti numbers of all their subgroups vanish.

\begin{ex}\label{ex: Z wreath Z}
    Let $G = \Z \wr \Z$. First, since $G$ is locally indicable (and therefore has unique products), $k[G]$ is a domain for any field $k$, and because $G$ is solvable, $k[G]$ is in fact an Ore domain by \cite{Tamari_Ore}. Hence, $\Dk{G}$ exists and is of weak dimension zero as a $k[G]$-module. But $G$ is incoherent, as it is itself finitely generated but not finitely presented (see, e.g., \cite{Baumslag_wreathProdFP}).

    We now show that $G$ is RFRS, using the fact that a finitely generated group is RFRS if and only if it is residually \{poly-$\Z$ and virtually Abelian\} \cite[Theorem 6.3]{OkunSchreve_DawidSimplified}. Let $(v,t) \in \left(\bigoplus_{i \in \Z} \Z\right) \rtimes \Z = G$ be a non-trivial element, where $t \in \Z$ and $v = (v_i)_{i \in \Z}$ with $v_i \in \Z$ for each $i$. If $t$ is non-trivial, then the chosen element survives in the obvious $\Z$ quotient of $G$, which is trivially poly-$\Z$ and virtually Abelian. Assume, then, that $v$ is non-trivial. For each $n \in \Z_{\geqslant 1}$, there is a quotient map
    \[
        \bigoplus_{i \in \Z} \Z \to \bigoplus_{i \in \Z/n} \Z, \quad (u_i)_{i \in \Z} \mapsto \sum_{j \in \Z} \left( u_{jn}, u_{1 + jn}, \dots, u_{n-1 + jn} \right).
    \]
    This map is well-defined, since only finitely many of the elements $u_i$ can be non-trivial. Moreover, this map extends to a quotient map
    \[
        G = \left(\bigoplus_{i \in \Z} \Z\right) \rtimes \Z \to \left(\bigoplus_{i \in \Z/n} \Z\right) \rtimes \Z,
    \]
    where in the quotient, $\Z$ acts by cyclically permuting the coordinates modulo $n$. For each $n$, the quotient is poly-$\Z$, and moreover it is virtually Abelian since the action of $\Z$ has finite order. By choosing $n$ to be sufficiently large, we can ensure that the image of $(v,t)$ is non-trivial in the quotient, which proves that $\Z \wr \Z$ is RFRS.
\end{ex}

%%%
%%% APPLICATIONS
%%%
\section{Applications of the main result} \label{sec: application}

%%% EQUIVALENCES
\subsection{Characterisations of coherent RFRS groups}
In this section we apply \cref{cor: hom coherence implies vanishing} to deduce that many properties are equivalent to coherence within the class of RFRS groups of cohomological dimension two.

We recall that a $2$-complex $X$ has \emph{nonpositive immersions} if for every cellular immersion $Y \looparrowright X$, either $\chi(Y) \leqslant 0$ or $\pi_1(Y) = \{1\}$.

\begin{cor}\label{cor: equivalences}
    Let $G$ be a finitely generated virtually RFRS group and let $k$ be a field such that $\cd_k(G) \leqslant 2$. The following are equivalent:
    \begin{enumerate}
        \item\label{item: coherence} $G$ is coherent;
        \item\label{item: gp alg coh} $k[G]$ is coherent;
        \item\label{item: hom coherence} $G$ is homologically coherent over $k$;
        \item\label{item: H2 fg} $\H_2(H;k)$ is a finite-dimensional $k$-vector space for every finitely generated subgroup $H \leqslant G$;
        \item\label{item: second L2 fg all subgps} $b_2^{\Dk{H}}(H)$ is finite for every finitely generated subgroup $H \leqslant G$;
        \item\label{item: second L2 zero} $b_2^{\Dk{G}}(G) = 0$;
        \item\label{item: v free by Z} $G$ is virtually free-by-cyclic;
        \item\label{item: DkG coherent} $\Dk{G}$ is coherent as a $k[G]$-module;
        \item\label{item: weak dim 1} $\mathcal D_{k[G]}$ is of weak dimension at most one as a $k[G]$-module;
        \item\label{item: NPI} $G$ has a subgroup of finite index isomorphic to $\pi_1(X)$ where $X$ is a $2$-complex with nonpositive immersions.
    \end{enumerate}
\end{cor}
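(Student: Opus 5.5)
First pass to a RFRS subgroup $G'\leqslant G$ of finite index. Every property in the list is either invariant under passing to finite-index subgroups or is already phrased virtually. For (1) and (3) this is standard. For (2), $k[G]$ is a finitely generated free module over $k[G']$, and coherence passes both ways. For (6), $b_2^{\Dk{G'}}(G') = [G:G']\, b_2^{\Dk{G}}(G)$ by restriction (Shapiro's lemma and flatness of $\Dk{G}$ over $\Dk{G'}$). Items (4) and (5) pass down to $G'$ trivially. Some care is needed for the torsion case, where $\Dk{G}$ should be read via $G'$; I will treat $G$ as RFRS throughout. We then have $\cd_k(G)\leqslant 2$, and $G$ is torsion-free and locally indicable.

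\textbf{The main cycle.} (1)$\Rightarrow$(3) is trivial. (3)$\Leftrightarrow$(6) is \cref{cor: hom coherence implies vanishing}. (6)$\Rightarrow$(7) is \cite[Corollary 3.9]{Fisher_freebyZ}. (7)$\Rightarrow$(1) is Feighn--Handel \cite{FeighnHandel_FreeByZCoherent} together with the fact that coherence passes to finite-index overgroups (by Schreier, a finitely generated subgroup $H$ meets the finite-index subgroup in a finitely generated subgroup of finite index in $H$).

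\textbf{Ring-theoretic items.} For (2)$\Rightarrow$(9), use \cref{thm: coherence and L2 Betti}\ref{item: gp alg coh weak dim one}. For (9)$\Rightarrow$(6), observe that $\H_2(G;\Dk{G}) = \Tor_2^{k[G]}(k,\Dk{G}) = 0$. For (7)$\Rightarrow$(2), I would cite that group algebras of free-by-cyclic groups are coherent, either via Jaikin-Zapirain--Linton or by going around the cycle: (7) gives (6) for every subgroup, and then one needs $k[G]$ coherent. This step is the main obstacle, and I expect to invoke \cite{JaikinLinton_coherence}, where $\b{2}=0$ for a locally indicable two-dimensional group yields coherence of $k[G]$ and of $\Dk{G}$ as a module. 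That same result would give (6)$\Rightarrow$(8). For (8)$\Rightarrow$(9): if $\Dk{G}$ is a coherent $k[G]$-module, its finitely generated submodules are finitely presented. From a presentation one gets that finitely generated right ideals $I$ of $k[G]$ satisfy $\Tor_1(I,\Dk{G})=0$, which gives weak dimension $\leqslant 1$ via Jaikin-Zapirain--Linton's criterion. I would reference their equivalence rather than reprove it.

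\textbf{Betti number items.} For (3)$\Rightarrow$(4), a subgroup $H$ that is $\FP_2(k)$ with $\cd\leqslant 2$ is $\FP(k)$, so its homology is finite-dimensional; the same argument gives (3)$\Rightarrow$(5). For (4)$\Rightarrow$(6) and (5)$\Rightarrow$(6), argue contrapositively. If $b_2^{\Dk{G}}(G)>0$, then by (3)$\Leftrightarrow$(6) there is a finitely generated non-$\FP_2$ subgroup $H$. I would use the Jaikin-Zapirain--Linton result that for locally indicable groups of $\cd\leqslant 2$, a finitely generated $H$ with finite $b_2^{\Dk{H}}(H)$ (resp.\ finite $\H_2(H;k)$, after an approximation argument) with vanishing... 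Since this is delicate, I would instead aim to build directly a finitely generated $H$ with infinite $\H_2$. Take a non-finitely-presented kernel $N$ of a character $\chi$ on a finite-index subgroup, for which Novikov homology $\H_2\ne0$ (by \cref{thm: L2 Novikov}), and add a generator so that the Mayer--Vietoris sequence shows infinite $b_2$. This is the step I am least sure of.

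\textbf{Nonpositive immersions.} For (7)$\Rightarrow$(10), a free-by-cyclic group with finitely generated or infinitely generated kernel is virtually $\pi_1$ of a 2-complex with nonpositive immersions, by the results of \cite{JaikinLinton_coherence} or of Wise on mapping tori. For (10)$\Rightarrow$(6), nonpositive immersions imply $\b{2}=0$ via Euler characteristic approximation over finite covers and immersions, again from \cite{JaikinLinton_coherence}. Finally, vanishing passes back to $G$ from the finite-index subgroup.
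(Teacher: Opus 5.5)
The core cycle (1) $\Rightarrow$ (3) $\Leftrightarrow$ (6) $\Rightarrow$ (7) $\Rightarrow$ (1) is the same as the paper's. Your (2) $\Rightarrow$ (9) $\Rightarrow$ (6), via \cref{thm: coherence and L2 Betti}, is a valid if roundabout substitute for the paper's immediate (2) $\Rightarrow$ (3).

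The genuine gap is items (4) and (5): you never show that either of them implies anything else. The proposed Mayer--Vietoris construction is not an argument. The kernel of a character need not be finitely generated. You give no mechanism by which ``adding a generator'' yields a finitely generated subgroup with infinite-dimensional $\H_2(\,\cdot\,;k)$; adding $t$ just recovers the finite-index subgroup, whose $\H_2$ is finite. And \cref{thm: L2 Novikov} says nothing about untwisted $\H_2$ of subgroups.

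Your contrapositive set-up is the right one: if (6) fails, \cref{cor: hom coherence implies vanishing} gives a finitely generated $H \leqslant G$ not of type $\FP_2(k)$. What is missing are the two external inputs the paper uses to close the loop (4) $\Rightarrow$ (5) $\Rightarrow$ (3):
\begin{itemize}
\item \cite[Lemma 6.11]{FisherKlinge_RPVN}, which bounds the second $\Dk{H}$-Betti number of such a finitely generated subgroup in terms of $\H_2(H;k)$;
\item the Jaikin-Zapirain--Linton criterion \cite[Corollary 3.5]{JaikinLinton_coherence}: for finitely generated locally indicable groups of cohomological dimension at most $2$, finiteness of $b_2^{\Dk{H}}(H)$ forces type $\FP_2(k)$.
\end{itemize}
Chained contrapositively, these turn your non-$\FP_2$ subgroup into one with infinite $b_2^{\Dk{H}}(H)$ and then infinite $\H_2(H;k)$.

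There is a second, smaller hole around (8). Your (8) $\Rightarrow$ (9) claims that module coherence of $\Dk{G}$ gives $\Tor_1^{k[G]}(I,\Dk{G})=0$ for finitely generated right ideals $I$ ``from a presentation''. This is not justified, and there is no evident general criterion of that form to cite. It is also unnecessary. Since $k[G] \subseteq \Dk{G}$, every finitely generated left ideal of $k[G]$ is a finitely generated $k[G]$-submodule of $\Dk{G}$, so (8) $\Rightarrow$ (2) is immediate, and you already have (2) $\Rightarrow$ (9).

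For the implications into (8) and (2), you lean on an unspecified Jaikin-Zapirain--Linton result over an arbitrary field $k$. The paper instead uses (7) $\Rightarrow$ (8) from \cite[Lemma 3.4]{Fisher_freebyZ}, followed by the trivial (8) $\Rightarrow$ (2).

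Likewise, for (10) you do not need an Euler-characteristic heuristic. Use \cite[Theorem 1.2]{JaikinLinton_coherence} to get homological coherence of the finite-index subgroup $\pi_1(X)$. Pass to $G$ by the Schreier argument, and then apply (3) $\Rightarrow$ (6).
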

\begin{proof}
    Plainly, \ref{item: coherence} $\Rightarrow$ \ref{item: hom coherence} $\Leftarrow$ \ref{item: gp alg coh} and \ref{item: hom coherence} $\Rightarrow$ \ref{item: H2 fg}. By \cite[Lemma 6.11]{FisherKlinge_RPVN}, we have \ref{item: H2 fg} $\Rightarrow$ \ref{item: second L2 fg all subgps}. By \cite[Corollary 3.5]{JaikinLinton_coherence}, we have \ref{item: second L2 fg all subgps} $\Rightarrow$ \ref{item: hom coherence}, establishing the equivalence of items \ref{item: hom coherence}, \ref{item: H2 fg}, and \ref{item: second L2 fg all subgps}.

    Now, \ref{item: hom coherence} $\Rightarrow$ \ref{item: second L2 zero} by \cref{cor: hom coherence implies vanishing}, and \ref{item: second L2 zero} $\Rightarrow$ \ref{item: v free by Z} by \cite[Theorem E]{Fisher_freebyZ}. In turn, \ref{item: v free by Z} $\Rightarrow$ \ref{item: coherence} by \cite{FeighnHandel_FreeByZCoherent}.
    
    We have \ref{item: v free by Z} $\Rightarrow$ \ref{item: DkG coherent}, \ref{item: weak dim 1}, \ref{item: NPI} by \cite[Lemma 3.4]{Fisher_freebyZ}, \cite[Theorem 6.1]{Wise_JussieuCoherenceNPI}, and \cite[Proposition 3.9]{HennekeLopez_PseudoSyl} respectively. It is clear that \ref{item: weak dim 1} $\Rightarrow$ \ref{item: second L2 zero} and \ref{item: DkG coherent} $\Rightarrow$ \ref{item: gp alg coh}. Finally, \ref{item: NPI} $\Rightarrow$ \ref{item: hom coherence} by \cite[Theorem 1.2]{JaikinLinton_coherence}. \qedhere
\end{proof}

\begin{proof}[Proof of \cref{cor: C}]
    \cref{cor: C} follows from \cref{cor: equivalences} by setting $k = \Q$, and recalling the definition of $\Dk{G}$-Betti numbers and that $\mathcal D_{\Q[G]}$ is isomorphic to the Linnell ring of $\Q[G]$ (see \cref{subsec: L2}). We comment further on items \ref{item: DkG coherent} and \ref{item: weak dim 1}, and how they imply the corresponding statements of \cref{cor: C}. Since $\mathcal D_{\Q[G]}$ is (canonically isomorphic to) a division subring of $\mathcal U(G)$, we have
    \[
        \Tor_i^{\Q[G]}(M,\mathcal U(G)) \cong \Tor_i^{\Q[G]}(M, \mathcal D_{\Q[G]}) \otimes_{\mathcal D_{\Q[G]}} \mathcal U(G)
    \]
    and the left-hand side vanishes if and only if the right-hand side does. Hence, the fact that $\mathcal D_{\Q[G]}$ is of weak dimension at most one as a $\Q[G]$-module if and only if the same property holds for $\mathcal U(G)$. Since $\mathcal U(G)$ is a direct sum of copies of $\mathcal D_{\Q[G]}$ (as a $\Q[G]$-module), it follows that $\mathcal U(G)$ is a coherent $\Q[G]$-module if and only if $\mathcal D_{\Q[G]}$ is. \qedhere
\end{proof}

%%% COXETER GROUPS
\subsection{Coxeter groups} \label{subsec: Coxeter}

We follow the discussion on Coxeter groups from \cite[Section 2]{JankiewiczWise_Coxeter}. For the remainder of this subsection, $G$ will always denote a finitely generated Coxeter group with the presentation $\pres{v_1, \dots, v_n}{(v_iv_j)^{m_{ij}} : 1 \leqslant i \leqslant j \leqslant n}$. Coxeter groups are virtually torsion-free; in fact they are virtually special by \cite{HaglundWise_CoxeterSpecial}, which is a property we will use below. Let $X$ be the presentation complex of the presentation for $G$ given above, and let $\widehat{X} \rightarrow X$ be a finite-degree cover corresponding to a torsion-free subgroup $H \leqslant G$. We define the \emph{compression} $\overline X$ of $\widehat X$ in two steps:
\begin{enumerate}
    \item collapse each $2$-cell with boundary path labelled by $v_i^2$ for some $i = 1, \dots , n$ to a single edge labelled by $v_i$;
    \item identify each $2$-cell with common boundary path $(v_i v_j)^{m_{ij}}$ to a single $2$-cell (there are $2m_{ij}$ such $2$-cells per common boundary path).
\end{enumerate}
We say that $G$ is \emph{two-dimensional} if $\overline{X}$ is aspherical for some finite-index subgroup $H \leqslant G$. This condition is equivalent to the requirement that
\[
    \frac{1}{m_{ij}} + \frac{1}{m_{jk}} + \frac{1}{m_{ki}} \leqslant 1
\]
(which in turn is equivalent to $\langle v_i, v_j, v_k \rangle$ being infinite)
for every triple of pairwise distinct indices $i,j,k$, with the convention that $\frac{1}{\infty} = 0$. It follows that two-dimensional Coxeter groups are of virtual cohomological dimension $2$. A torsion-free finite-index subgroup $H \leqslant G$ is thus of finite type and therefore $G$ has a well-defined Euler characteristic (see \cref{subsec: L2}). The Euler characteristic of a two-dimensional Coxeter group $G$ can be explicitly calculated from the presentation:
\[
    \chi(G) \ = \ 1 - \frac{n}{2} + \sum_{1 \leqslant i < j \leqslant n} \frac{1}{2m_{ij}}.
\]

Using our main result, we confirm Conjecture 3.4 and give a positive solution to Problem 4.6 of \cite{JankiewiczWise_Coxeter} (see also \cite[Problem 11.28]{Wise_anInvitation}). By a \emph{Coxeter subgroup} of $G$, we mean a subgroup $H$ generated by a subset of the generators in the fixed presentation. In this case, $H$ is also a Coxeter group, whose relations are given by those involving the chosen subset of the generators.

\begin{thm}\label{thm: Coxeter}
    Let $G$ be a two-dimensional Coxeter group. Then $G$ is coherent if and only if $\chi(H) \leqslant 0$ for every infinite Coxeter subgroup $H \leqslant G$.
\end{thm}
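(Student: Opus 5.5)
The ``if'' direction is \cite[Theorem 1.4]{JaikinLinton_coherence}, which states that a two-dimensional Coxeter group all of whose infinite Coxeter subgroups have nonpositive Euler characteristic is coherent. I will quote it and concentrate on the converse. So suppose $G$ is coherent and, for contradiction, that $H = \langle v_{i_1}, \dots, v_{i_r} \rangle$ is an infinite Coxeter subgroup with $\chi(H) > 0$.

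Coherence passes to subgroups, and $H$ is finitely generated, so $H$ is coherent. Every triple of its generators already generates an infinite subgroup in $G$, hence $H$ is itself a two-dimensional Coxeter group. By \cite{HaglundWise_CoxeterSpecial}, $H$ is virtually special, hence virtually RFRS.

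Next, fix a torsion-free finite-index subgroup $H' \leqslant H$ whose compressed complex $\overline{X}$ is a compact aspherical $2$-complex. Then $H'$ has a compact two-dimensional classifying space, so $\cd_\Q(H') \leqslant 2$. After passing to a further finite-index subgroup I may also assume $H'$ is RFRS (the intersection of $H'$ with a RFRS finite-index subgroup is still RFRS and still has a compact $2$-dimensional $K(\pi,1)$, namely a finite cover of $\overline{X}$). The group $H'$ is coherent, so in particular homologically coherent over $\Q$. By \cref{cor: hom coherence implies vanishing} (or \cref{thm: A}), $\b{2}(H') = 0$.

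Now apply Lück's formula $\chi(H') = \b{0}(H') - \b{1}(H') + \b{2}(H')$. Since $H$ is infinite, $H'$ is infinite and $\b{0}(H') = 0$. Hence $\chi(H') = -\b{1}(H') \leqslant 0$, and therefore $\chi(H) = \chi(H')/[H:H'] \leqslant 0$, contradicting the assumption. The only points requiring care are the bookkeeping ones: that the Coxeter formula for $\chi(H)$ agrees with the rational Euler characteristic defined via finite-index subgroups (the paper states this in \cref{subsec: Coxeter}), and that the hypotheses of \cref{cor: hom coherence implies vanishing} genuinely hold for $H'$. I expect no real obstacle beyond citing these correctly.
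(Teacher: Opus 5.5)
Your proof is correct and follows the paper's argument: you quote Jaikin-Zapirain--Linton for the ``if'' direction, and for the converse you use coherence of the Coxeter subgroup and the main result to get $\b{2} = 0$, then Lück's formula with $\b{0} = 0$ for infinite groups to get $\chi \leqslant 0$. The only difference is bookkeeping. You pass explicitly to a torsion-free RFRS finite-index subgroup and invoke \cref{cor: hom coherence implies vanishing}, whereas the paper applies \cref{cor: equivalences} directly to the virtually RFRS subgroup.
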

\begin{proof}
    Since Coxeter groups are virtually special \cite{HaglundWise_CoxeterSpecial}, they are virtually RFRS. If $G$ is coherent, then $\b{2}(H) = 0$ for every subgroup $H \leqslant G$ by \cref{cor: equivalences}. It then follows that $\chi(H) \leqslant 0$ for every infinite Coxeter subgroup $H \leqslant G$. Conversely, if $\chi(H) \leqslant 0$ for every infinite Coxeter subgroup, then $G$ is coherent by \cite[Theorem 1.4, Proposition 6.2]{JaikinLinton_coherence}. \qedhere
\end{proof}

As a corollary, we compute the $L^2$-Betti numbers of coherent two-dimensional Coxeter groups and obtain that they are precisely the virtually free-by-cyclic ones. Note that the computation of $L^2$-Betti numbers of a general Coxeter group is a difficult problem, even in the right-angled case.

\begin{cor}\label{cor: CoxeterL2_NPSC}
    Let $G$ be a coherent infinite two-dimensional Coxeter group. Then $\b{1}(G) = -\chi(G)$ and $\b{n}(G) = 0$ for $n \neq 1$. In particular, $G$ is virtually free-by-cyclic.
\end{cor}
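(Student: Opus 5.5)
The plan is to reduce everything to \cref{cor: equivalences} and the Euler characteristic formula for $L^2$-Betti numbers. Let $G$ be a coherent infinite two-dimensional Coxeter group.

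First, $G$ is virtually special by \cite{HaglundWise_CoxeterSpecial}, hence virtually RFRS, and it is finitely generated. Its virtual cohomological dimension is $2$, so we may fix a torsion-free finite-index subgroup $H \leqslant G$ admitting a compact two-dimensional classifying space, for instance the aspherical compression $\overline X$. Coherence passes to $H$, and $\cd_\Q(H) \leqslant 2$. Applying \cref{cor: equivalences} to $H$ with $k = \Q$ gives $\b{2}(H) = 0$, and also that $H$ is virtually free-by-cyclic. Since $L^2$-Betti numbers scale multiplicatively under passage to finite-index subgroups ($\b{n}(H) = [G:H]\,\b{n}(G)$), we obtain $\b{2}(G) = 0$. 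The same corollary already yields the final assertion, that $G$ is virtually free-by-cyclic.

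Next, $\b{n}(G) = 0$ for $n \geqslant 3$ because $H$ has a two-dimensional classifying space. Also $\b{0}(G) = 0$, since $G$ is infinite and $\b{0}$ vanishes exactly for infinite groups.

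Finally, the lemma relating $\chi$ and $L^2$-Betti numbers (\cite[Theorem 1.35(2)]{Luck02}) gives
\[
    \chi(G) = \b{0}(G) - \b{1}(G) + \b{2}(G) = -\b{1}(G),
\]
which is the claimed formula $\b{1}(G) = -\chi(G)$.

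The only step needing care is transferring the vanishing between $G$ and $H$. This rests on the standard finite-index scaling of $L^2$-Betti numbers together with the convention $\chi(G) = \chi(H)/[G:H]$, both of which the paper already takes for granted. There is no real obstacle here: the substance lies in \cref{cor: equivalences}, and we may simply invoke it.
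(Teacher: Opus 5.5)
Your argument is correct and is exactly the intended one. The paper gives no separate proof and treats the corollary as immediate from four ingredients: \cref{cor: equivalences}, which gives $\b{2}=0$ and virtual free-by-cyclicity; the vanishing of $\b{0}$ for infinite groups; the vanishing of $\b{n}$ for $n\geqslant 3$ in virtual dimension two; and the Euler characteristic formula. Your detour through the torsion-free subgroup $H$ with finite-index scaling is harmless, and it sidesteps defining $\Dk{G}$ for a group with torsion. It is not needed, though: \cref{thm: A} applies to $G$ directly, since $\cd_\Q(G)\leqslant 2$.
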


In our next application, we relate coherence of Coxeter groups to the curvature properties of $\overline X$. For this we briefly recall the notions of planar and sectional curvature of a $2$-complex, again following the discussion in \cite[Section 4]{JankiewiczWise_Coxeter}. An \emph{angled $2$-complex} is a combinatorial $2$-complex $Y$ with an assignment of a real number $\angle(e) \in \R$ for each edge of $\link(y)$ for each $0$-cell $y$ of $Y$. We think of these numbers as angles at the corners of the $2$-cells of $X$. The \emph{curvature of a $2$-cell} $f$ is
\[
    \kappa(f) \ := \ 2\pi - \sum (\pi - \angle(e)),
\]
where the sum is taken over the corners $e$ of $f$. The \emph{curvature of a $0$-cell} $y$ is
\[
    \kappa(y) \ := \ 2\big(\pi - \chi(\link(y))\big) + \sum \big(\pi - \angle(e)\big),
\]
where the sum is taken over the corners of $2$-cells meeting $v$.

A \emph{section} of $Y$ at $y$ is a based combinatorial immersion $(S,s) \rightarrow (Y,y)$, where $S$ is another combinatorial $2$-complex and $s$ is a $0$-cell of $S$. The requirement that the map be an immersion is equivalent to asking that the induced maps on links be injective. The section is \emph{planar} if $S$ is homeomorphic to a closed $2$-disk. The \emph{curvature of the section} is $\kappa(s)$, where the angles on $S$ (at $s$) are obtained by pulling back those of $Y$. Then $Y$ has \emph{nonpositive (planar) sectional curvature} if $\kappa(f) \leqslant 0$ for each $2$-cell $f$ of $Y$ and $\kappa(s) \leqslant 0$ for all (planar) sections $(S,s) \rightarrow (Y,y)$.

The complex $\overline{X}$ can be given a natural angled structure by making each $n$-gon Euclidean; more precisely, the corners of the cells corresponding to the relator $(v_iv_j)^{m_{ij}}$ are all given the angle $(1 - \frac{1}{m_{ij}})\pi$. By a direct calculation, it is easy to see that if $\overline{X}$ has nonpositive planar curvature, then $G$ is two-dimensional. The following result confirms a conjecture of Jankiewicz and Wise \cite[Conjecture 4.7]{JankiewiczWise_Coxeter} (see also \cite[Conjecture 11.29]{Wise_anInvitation}).

\begin{thm}\label{thm: conj_Kasia_Dani}
    If $G$ is a Coxeter group with nonpositive planar sectional curvature, then the following are equivalent:
    \begin{enumerate}[label=\textnormal{(\arabic*)}]
        \item $G$ is coherent;
        \item $\overline{X}$ has nonpositive sectional curvature.
    \end{enumerate}
\end{thm}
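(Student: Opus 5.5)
We would prove the two implications separately. For (2) $\Rightarrow$ (1), we would use the result of Wise that nonpositive sectional curvature of a $2$-complex implies nonpositive immersions (or at least local indicability plus the relevant combinatorial Gauss--Bonnet control). Then $\pi_1(\overline X)$, which is a finite-index subgroup of $G$, is homologically coherent by \cite[Theorem 1.2]{JaikinLinton_coherence}. Since $G$ is virtually special, hence virtually RFRS, of virtual cohomological dimension two, \cref{cor: equivalences} (the implication from \ref{item: NPI} to \ref{item: coherence}) upgrades this to coherence of $G$.

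For (1) $\Rightarrow$ (2), assume $G$ is coherent. By \cref{thm: Coxeter}, $\chi(H) \leqslant 0$ for every infinite Coxeter subgroup $H \leqslant G$. It then suffices to show the following: if $\overline X$ fails nonpositive sectional curvature while having nonpositive planar sectional curvature, then some infinite Coxeter subgroup has positive Euler characteristic. Since the $2$-cells of $\overline X$ are Euclidean polygons, $\kappa(f) = 0$ for every $2$-cell, so failure means some section $(S,s) \to (\overline X, y)$ has $\kappa(s) > 0$. The link of the vertex $y$ in $\overline X$ is a graph with vertices corresponding to the generators $v_i$ and edges corresponding to the relators $(v_iv_j)^{m_{ij}}$ with $m_{ij} < \infty$; its edge of type $(i,j)$ carries angle $(1 - \frac{1}{m_{ij}})\pi$. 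A section at $s$ then corresponds to a subgraph of this link, namely the link of $s$. We may enlarge the section to be full on the vertex set involved, which only adds edges with angle less than $\pi$ and hence, by the formula for $\kappa$, only increases curvature. Such a full subgraph corresponds to a Coxeter subgroup $H = \langle v_i : i \in I \rangle$.

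The core computation is to match $\kappa(s)$ with $\chi(H)$. Up to a positive factor, we expect $\kappa(s) = 2\pi - \pi|I| + \sum_{i<j \in I} \frac{\pi}{m_{ij}}$ to equal $2\pi\chi(H)$, using the formula $\chi(H) = 1 - \frac{|I|}{2} + \sum \frac{1}{2m_{ij}}$. Verifying this normalisation against the paper's definition of $\kappa$ (with its $\chi(\link)$ term) is a routine check. A positive section therefore gives $\chi(H) > 0$.

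The remaining issue, and the step we expect to be the main obstacle, is ensuring that this $H$ is infinite, since a finite $H$ also has positive $\chi$. Here we would use the hypothesis of nonpositive planar sectional curvature. Two-dimensionality already forces the finite Coxeter subgroups to be of rank at most $2$, and the sections coming from rank $\leqslant 2$ subgroups are planar (a single polygon, or a disc made of polygons around a vertex). Such sections have $\kappa \leqslant 0$ by hypothesis, so any positively curved section must come from an infinite $H$. To make this rigorous, we would check that the enlargement step keeps $|I| \geqslant 3$ and that the Euler characteristic formula remains valid for $H$ of rank at least $3$ in a two-dimensional Coxeter group. Combining this with \cref{thm: Coxeter} then yields the contradiction.
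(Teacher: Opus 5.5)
Your direction (2) $\Rightarrow$ (1) is acceptable. You pass through Wise's implication ``nonpositive sectional curvature $\Rightarrow$ nonpositive immersions'' for $\overline{X}$ and then use item (10) $\Rightarrow$ item (1) of \cref{cor: equivalences}; the paper just cites \cite[Theorem 1.4]{JaikinLinton_coherence}.

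In the other direction, several parts of your argument are correct:
your description of $\link(y)$ (vertices $v_1,\dots,v_n$, one edge of angle $(1-\frac{1}{m_{ij}})\pi$ for each finite $m_{ij}$, $i\neq j$);
the observation that adding edges raises $\kappa$;
and the identity $\kappa = 2\pi\chi(H_I)$ for the full subgraph on $I$.
Here you are re-deriving the relevant half of \cite[Theorem 4.5]{JankiewiczWise_Coxeter}, which the paper quotes instead.

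The gap is the step guaranteeing that $H_I$ is infinite, and the mechanism you propose cannot work. The planar hypothesis does not make rank $\leqslant 2$ configurations nonpositively curved. By your own formula, a single $2m_{ij}$-gon with $s$ at a corner is a closed disc with curvature $2\pi - 2\pi + \pi/m_{ij} = \pi/m_{ij} > 0$ at $s$, and a lone edge gives $\pi$. A disc of polygons surrounding $s$ needs a cycle in $\link(s)$, hence at least three generators, so it cannot come from a rank $\leqslant 2$ subgroup. For $\overline X$, nonpositive planar sectional curvature only constrains embedded cycles in $\link(y)$; it amounts to the two-dimensionality condition and says nothing about such small pieces. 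Indeed, if they counted as sections, $\overline X$ would fail nonpositive sectional curvature as soon as some $m_{ij}$ is finite, and the theorem would be false.

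The missing point is definitional. In the notion of sectional curvature behind \cite[Theorem 4.5]{JankiewiczWise_Coxeter}, the link of $s$ in a section has no spurs (compare the criterion there, on Coxeter subgroups whose graph $\Gamma_H$ is connected but not a tree). So $\link(s)$ contains an embedded cycle. Since $\link(y)$ is a simple graph (the $v_i^2$ cells were collapsed, and there is one edge per finite $m_{ij}$), that cycle has at least three vertices. Thus $|I| \geqslant 3$ from the outset. Your enlargement step never changes $I$, so ``checking that it keeps $|I|\geqslant 3$'' is not where the issue lies.

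With $|I| \geqslant 3$, $H_I$ is infinite, because any three generators of a two-dimensional Coxeter group generate an infinite subgroup. Then \cref{thm: Coxeter} gives $\chi(H_I) \leqslant 0$; alternatively, as in the paper, use $\b{2}(H_I) = 0 = \b{0}(H_I)$. This contradicts $\kappa > 0$. With this replacement, your argument becomes the paper's proof with \cite[Theorem 4.5]{JankiewiczWise_Coxeter} reproved inline.
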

\begin{proof}
    One direction is done in \cite[Theorem 1.4]{JaikinLinton_coherence}, so assume $G$ is coherent. According to \cite[Theorem 4.5]{JankiewiczWise_Coxeter} it is sufficient to check that for each nontrivial Coxeter subgroup $H \subseteq G$ whose associated graph $\Gamma_H$ is connected but not a tree we have $\chi(H) \leqslant 0$. Since $H$ is coherent, $\b{2}(H) = 0$ by \cref{cor: equivalences}. Since the associated graph of $H$ is not a tree, it must have at least $3$ vertices, so it is infinite and therefore $\b{0}(H) = 0$. Hence, $\chi(H) \leqslant 0$. \qedhere
\end{proof}

%%% RANDOM GROUPS
\subsection{Random groups} \label{subsec: random}

In this subsection we consider the few relator random presentation model of Arzhantseva--Ol'shanski{\u\i}  \cite{ArzhantsevaOlshanskii_fewRelRandom}. Fix integers $m \geqslant 2$, $n \geqslant 0$, and a length $l > 0$. A \emph{random presentation} (with respect to these parameters) is a group presentation 
\[
    \pres{s_1, \dots, s_m}{r_1, \dots, r_n},
\]
where the relators $r_i$ are words sampled uniformly at random from the set of  all cyclically reduced words of length at most $l$ on the alphabet $\{s_1^{\pm1}, \dots, s_m^{\pm1}\}$. By a standard abuse of notation, we will use the presentation to denote the group that it presents. We say that a property $\mathcal P$ of groups holds with asymptotic probability $0 \leqslant \lambda \leqslant 1$ if 
\[
    \mathbb P(\pres{s_1, \dots, s_m}{r_1, \dots, r_n} \ \text{has} \ \mathcal P) \rightarrow \lambda \quad \text{as} \quad l \rightarrow \infty.
\]

We obtain the following result, which essentially confirms \cite[Conjecture 17.15]{Wise_anInvitation}. It only remains to upgrade item \ref{item: zero} below from positive asymptotic probability to asymptotic probability $1$.

\begin{thm}\label{thm: random coherence}
    Let $G = \pres{s_1, \dots, s_m}{r_1, \dots, r_n}$ be a random presentation with relators of length at most $l$.
    \begin{enumerate}[label=\textnormal{(\arabic*)}]
        \item\label{item: positive} $G$ is incoherent with asymptotic probability $1$ when $n > m-1$.
        \item\label{item: zero} $G$ is virtually (finitely generated free)-by-cyclic with positive asymptotic probability when $n = m-1$.
        \item\label{item: negative} $G$ is virtually free-by-cyclic with asymptotic probability $1$ when $n < m-1$.
    \end{enumerate}
\end{thm}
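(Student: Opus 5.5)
The plan is to combine known facts about few-relator random groups with \cref{cor: equivalences}. The first ingredient is a set of results that hold with asymptotic probability $1$ for any fixed $m \geqslant 2$ and $n \geqslant 0$. The presentation satisfies the $C'(1/6)$ small cancellation condition, as in Arzhantseva--Ol'shanski{\u\i}. Hence $G$ is torsion-free and hyperbolic, and the presentation complex $X$ is aspherical, so $\cd(G) \leqslant 2$ and $\chi(G) = 1 - m + n$. By Wise's cubulation of small cancellation groups \cite{Wise_cubulatingSmallCancellation} and Agol's theorem, $G$ is virtually compact special, and in particular virtually RFRS. Finally, $G$ is finitely generated. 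So \cref{cor: equivalences} applies to $G$ with $k = \Q$ with asymptotic probability $1$.

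For \ref{item: positive}, take $n > m - 1$, so $\chi(G) = 1 - m + n > 0$. Since $G$ has a compact two-dimensional classifying space, the lemma relating $\chi$ to $L^2$-Betti numbers gives $\b{2}(G) \geqslant \chi(G) > 0$. \cref{cor: equivalences} then shows that $G$ is not coherent. No explicit witness is needed here, and this is exactly the point of the global obstruction.

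For \ref{item: negative}, take $n < m - 1$, so $\chi(G) < 0$. Here $\b{2}(G) = 0$ should follow from a deficiency argument. One route is that a one-relator quotient, or more directly a presentation complex of deficiency at least $2$, has vanishing $\b{2}$ with high probability. I would use the result of Kielak--Kropholler--Wilkes \cite{KielakKrophollerWilkes_RandomL2}, which shows that $\b{2}$ vanishes generically for random groups with $m - n \geqslant 2$. Their argument passes through the Novikov and $\Sigma$-invariant description and shows that $\Sigma$-invariants are generic. Once $\b{2}(G) = 0$ is known, \cref{cor: equivalences} gives that $G$ is virtually free-by-cyclic.

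For \ref{item: zero}, take $n = m - 1$, so $\chi(G) = 0$. Here I would invoke the Dunfield--Thurston-type results, and their generalisations in \cite{KielakKrophollerWilkes_RandomL2}, showing that with positive asymptotic probability there is an epimorphism to $\Z$ with finitely generated kernel. Since $\chi(G) = 0$ and $\cd(G) = 2$, that kernel is free by Bieri's theorem on normal subgroups of two-dimensional groups.

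The main obstacle is establishing $\b{2}(G) = 0$ in case \ref{item: negative}, that is, importing the correct generic statement from the literature. In case \ref{item: positive} the argument is essentially formal once the genericity of $C'(1/6)$ and virtual specialness is in place.
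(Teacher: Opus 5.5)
Your proof of item (1) follows the paper's chain: generic $C'(1/6)$, asphericity, Wise's cubulation plus Agol, then $\chi(G)=1-m+n>0$ forcing $\b{2}(G)>0$, then \cref{cor: equivalences}; and, as in the paper, items (2) and (3) are imported from Kielak--Kropholler--Wilkes. Two small repairs are needed. First, torsion-freeness and asphericity also require the equally generic absence of proper-power relators, which $C'(1/6)$ alone does not rule out. Second, in (2), Bieri's theorem needs the kernel to be finitely presented (type $\FP_2$), not merely finitely generated; the finitely generated, non-free kernel of $F_2\times F_2\to\Z$ shows this. Rather than taking that detour, simply cite Theorems A and B of Kielak--Kropholler--Wilkes, which give the stated conclusions directly.
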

\begin{proof}
    Items \ref{item: zero} and \ref{item: negative} are due to Kielak--Kropholler--Wilkes \cite[Theorems A and B]{KielakKrophollerWilkes_RandomL2}, so we focus on item \ref{item: positive}. By \cite[Lemma 3]{ArzhantsevaOlshanskii_fewRelRandom}, the presentation has the $C'(\frac{1}{6})$ small cancellation property with asymptotic probability $1$, and moreover, the presentation will have no relators that are proper powers with asymptotic probability $1$. In particular, the presentation complex of the random presentation is aspherical \cite{Lyndon_DehnAlg}, $G$ is hyperbolic \cite[Section 4.7]{GromovHG}, $G$ acts properly and cocompactly on a CAT(0) cube complex \cite{Wise_cubulatingSmallCancellation}, and thus $G$ is virtually compact special \cite{AgolHaken} with asymptotic probability $1$. Since the presentation complex is aspherical, we have $\chi(G) = 1 - m + n > 0$ and therefore $\b{2}(G) > 0$. Hence, $G$ is incoherent with asymptotic probability $1$ by \cref{cor: equivalences}. \qedhere
\end{proof}

%%% ALGORITHM
\subsection{An algorithm to detect coherence}

Since $L^2$-Betti numbers are algorithmically computable in many classes of groups with decidable word problem \cite{LohUschold_L2computability}, we are able to deduce the following corollary.

\begin{cor}\label{cor: coh algorithm}
    There is an algorithm that, given a virtually compact special group $G$ with $\cd_\Q(G) \leqslant 2$, determines whether or not $G$ is coherent.
\end{cor}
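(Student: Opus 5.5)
The plan is to reduce coherence to the vanishing of a single $L^2$-Betti number, using \cref{cor: equivalences}, and then to compute that Betti number with the algorithm of L\"oh--Uschold. Let $G$ be virtually compact special with $\cd_\Q(G) \leqslant 2$, given by a finite presentation; for definiteness, take this to include a finite-index torsion-free compact special subgroup together with a compact nonpositively curved special cube complex realising it, or data from which these can be produced. Such groups are finitely presented and virtually RFRS, so by \cref{cor: equivalences} with $k=\Q$, $G$ is coherent if and only if $\b{2}(G)=0$. Since $L^2$-Betti numbers scale by the index, $\b{2}(G)=0$ if and only if $\b{2}(H)=0$ for any finite-index subgroup $H$. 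So it suffices to decide whether $\b{2}(H)=0$ for a suitable torsion-free finite-index $H$.

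The algorithm runs in the following steps. First, pass to a torsion-free finite-index subgroup $H$ with a finite classifying space. For a virtually compact special group one can enumerate finite-index subgroups; a compact special cube complex gives a finite $K(H,1)$ whose dimension may exceed $2$, but this is harmless because $\b{n}$ is computed from any finite free resolution. Second, write down the cellular chain complex of the universal cover as a finite complex of finitely generated free $\Z[H]$-modules. The $L^2$-Betti numbers are then von Neumann dimensions of the kernels and cokernels of the matrices of the boundary maps over $\Q[H]$. Third, invoke L\"oh--Uschold \cite{LohUschold_L2computability}. RFRS groups satisfy the Strong Atiyah Conjecture (\cref{subsec: L2}), so $\b{2}(H)$ is an integer and $\b{2}(G)=\b{2}(H)/[G:H]$. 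Moreover, virtually compact special groups have solvable word problem, being linear. These are the hypotheses under which their algorithm computes $\b{n}$ exactly, and in particular decides whether it vanishes. One could also avoid exact computation: compute the rank of $\Tor_2^{\Q[H]}(\Q,\mathcal D_{\Q[H]})$ via L\"uck approximation, with upper bounds from finite quotients, and combine this with integrality.

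The step I expect to be the main obstacle is checking that the hypotheses of the L\"oh--Uschold computability theorem are met uniformly and effectively for the input class. That means making explicit what "given" a virtually compact special group means, and producing effectively a torsion-free finite-index subgroup together with its finite chain complex. If their theorem requires effective residual chains or the Atiyah conjecture with known denominators, I will supply these from the RFRS and linearity properties. The torsion-free subgroup has denominator $1$, and an effective residual chain can be read off from the embedding into a right-angled Artin group, which is linear over $\Z$.
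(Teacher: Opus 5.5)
Your overall route is the paper's: reduce coherence to $\b{2}(G)=0$ via \cref{cor: equivalences}, approximate $\b{2}$ with L\"oh--Uschold \cite{LohUschold_L2computability}, and use Atiyah integrality to turn the approximation into a decision. However, two steps are not actually carried out.

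\textbf{Effectiveness of the input.} You assume as part of the input a finite-index torsion-free special subgroup together with its cube complex. The statement, however, concerns a group given by a finite presentation, with the promise that it is virtually compact special. Saying ``one can enumerate finite-index subgroups'' does not close this gap, because you have no procedure for recognising which enumerated subgroup is torsion-free. The missing idea, which is the paper's first paragraph, is a dovetailing argument. Run an enumeration of presentations of finite-index subgroups of $G$, recording their indices, alongside an enumeration of presentations of fundamental groups of compact special cube complexes. Because $G$ is virtually compact special, a coincidence must eventually occur. Because special groups are torsion-free, that coincidence certifies a torsion-free subgroup of known index $l$. This index is all you need: it gives $\b{2}(G)\in\frac1l\Z$, so you can work with $G$ directly. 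Alternatively, the matched cube complex supplies your $H$ and a finite $K(H,1)$.

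\textbf{What L\"oh--Uschold need and provide.} You misstate both. Decidability of the word problem is not the full hypothesis of their computability result, and you never mention the Determinant Conjecture. The paper applies their Theorem 5.7 and Remark 5.8 to groups with decidable word problem \emph{that satisfy the Determinant Conjecture}. That conjecture holds here because virtually compact special groups are residually finite, hence sofic \cite{ElekSzabo_hyperlinearity}. Nor does their algorithm return $\b{2}$ exactly. It produces rationals $q_n$ with $|\b{2}(G)-q_n|<2^{-n}$, and the decision must come from an explicit threshold argument: choose $n$ with $2^{-n}<\frac{1}{2l}$, then declare $G$ coherent if and only if $q_n\leqslant\frac{1}{2l}$.

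Your suggested fallback via L\"uck approximation does not repair this. The normalised Betti numbers $b_2(H_i;\Q)/[H:H_i]$ of finite-index subgroups are not upper bounds for $\b{2}(H)$: for $H=\Z^2$ they are all positive, while $\b{2}(\Z^2)=0$. Moreover, L\"uck's theorem comes with no effective rate of convergence, so integrality alone gives no stopping criterion.
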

\begin{proof}
    We first show there is an algorithm that determines the index of a torsion-free subgroup of $G$; we thank Marco Linton for providing this argument. Enumerate all presentations of all finite-index subgroups of $G$, keeping note of the index, and enumerate all presentations of all fundamental groups of compact special cube complexes. Since we are given that $G$ is virtually compact special, eventually two of the presentations will agree. Since special groups are torsion-free, we thus obtain the index of a torsion-free subgroup.

    Since virtually compact special groups are residually finite, they have decidable word problem and satisfy the Determinant Conjecture \cite{ElekSzabo_hyperlinearity}. Hence, by \cite[Theorem 5.7 and Remark 5.8]{LohUschold_L2computability}, there is an algorithm that computes a sequence $(q_n)_{n \in \N}$ of rational numbers such that
    \begin{equation} \label{eq: compute seq}
        \left| \b{2}(G) - q_n \right| < \frac{1}{2^n}
    \end{equation}
    for all $n \in \N$.

    We can now describe the algorithm to detect coherence. Given (a presentation for) a group $G$ as in the statement of the corollary, first determine the index $l$ of a torsion-free subgroup, using the algorithm from the first paragraph. By \cite{Schreve_AtiyahVCS}, we know that $G$ satisfies the Strong Atiyah Conjecture, and hence that $\b{2}(G) \in \frac{1}{l} \Z$. Let $n$ be such that $\frac{1}{2^n} < \frac{1}{2l}$ and use the algorithm from \cite{LohUschold_L2computability} to compute the first $n$ terms of the sequence $(q_n)$ satisfying \eqref{eq: compute seq}. If $q_n \leqslant \frac{1}{2l}$, then 
    \[
        \b{2}(G) < \frac{1}{2^n} + q_n < \frac{1}{2l} + \frac{1}{2l} = \frac{1}{l},
    \]
    which forces $\b{2}(G) = 0$. The algorithm then terminates and outputs ``$G$ is coherent". On the other hand, if $q_n > \frac{1}{2l}$, then
    \[
        \b{2}(G) > q_n - \frac{1}{2^n} > 0,
    \]
    and so the algorithm terminates and outputs ``$G$ is incoherent". The correctness of this algorithm is guaranteed by \cref{cor: equivalences}. \qedhere
\end{proof}

%%% RIGIDITY
\subsection{Rigidity of coherence}

The question as to whether coherence is a quasi-isometry invariant is a well known open problem (it is asked, e.g., in \cite[Problem 35]{Wise_anInvitation}). Here, we show that this is the case within the class of virtually RFRS groups of cohomological dimension at most $2$. Since the vanishing of $L^2$-Betti numbers has good invariance properties, we are also able to deduce results on measure equivalence and profinite invariance of coherence.

\begin{cor}\label{cor: rigidity}
    Let $G$ and $H$ be finitely generated virtually RFRS groups of virtual cohomological dimension at most $2$. If $G$ and $H$ are
    \begin{enumerate}
        \item\label{item: QI} quasi-isometric,
        \item\label{item: ME} measure equivalent, or
        \item\label{item: profinite equivalence} virtually compact special and profinitely isomorphic,
    \end{enumerate}
    then $G$ is coherent if and only if $H$ is coherent.
\end{cor}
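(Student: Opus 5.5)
By \cref{cor: equivalences}, applied with $k = \Q$ and passed to finite-index subgroups, coherence of a finitely generated virtually RFRS group $G$ with $\mathrm{vcd}(G) \leqslant 2$ is equivalent to $\b{2}(G) = 0$. To use the corollary we first pass to a finite-index RFRS subgroup $G' \leqslant G$ that is torsion-free with $\cd_\Q(G') \leqslant 2$. This is possible because a RFRS group is torsion-free, and it suffices because both coherence and the vanishing of $\b{2}$ are commensurability invariants (using $\b{2}(G') = [G:G']\,\b{2}(G)$). The statement therefore reduces to showing that, in each of the three situations, $\b{2}(G) = 0$ if and only if $\b{2}(H) = 0$. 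The same reduction applies to $H$.

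\textbf{Measure equivalence.} Gaboriau's theorem says that $L^2$-Betti numbers of measure equivalent groups are proportional: $\b{n}(G) = c\,\b{n}(H)$ for all $n$, where $c > 0$ is the coupling index. In particular the vanishing of $\b{2}$ is preserved, and the claim follows.

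\textbf{Quasi-isometry.} In general only vanishing in a range is a quasi-isometry invariant, but here we work in cohomological dimension $2$. Pass to torsion-free finite-index subgroups $G'$ and $H'$ with $\cd_\Q \leqslant 2$. I would first try to invoke the theorem that vanishing of $L^2$-Betti numbers is a QI invariant among groups admitting finite type models (Pansu, Mimura--Ozawa--Sako--Suzuki, or Sauer's work on vanishing under quasi-isometry). That requires finiteness properties we do not have a priori, since $G'$ need only be of type $\FP_1$. I expect this to be the main obstacle.

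To get around it, I would argue by contradiction. Suppose $G$ is coherent. Then by \cref{cor: equivalences} it is virtually free-by-cyclic, and in particular finitely presented. A group quasi-isometric to a finitely presented group is finitely presented, so $H$ is finitely presented, and hence $H'$ is of type $\FP(\Q)$ since $\cd_\Q(H') \leqslant 2$. Then both groups are of type $\FP$ over $\Q$, and Sauer's vanishing result (or Fisher's QI-invariance of vanishing of the top-dimensional $L^2$-Betti number, here degree $2$) applies. It gives $\b{2}(H) = 0$, so $H$ is coherent. Exchanging the roles of $G$ and $H$ gives the converse.

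\textbf{Profinite isomorphism.} For virtually compact special groups, I would use the known profinite invariance of the vanishing of $L^2$-Betti numbers in low degrees. Alternatively, \cite[Corollary A.9]{HughesMollerVarghese_CoxterProfiniteProps} shows that being virtually free-by-cyclic is a profinite invariant among virtually compact special groups of cohomological dimension at most two. Item \ref{item: v free by Z} of \cref{cor: equivalences} then yields the claim directly, with the same passage to torsion-free finite-index subgroups. In that subgroup passage one uses that profinite isomorphism induces a bijection of finite-index subgroups with isomorphic completions.
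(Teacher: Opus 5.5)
Your proposal is correct. For measure equivalence it matches the paper exactly: Gaboriau's proportionality of $L^2$-Betti numbers combined with \cref{cor: equivalences}. The other two items are handled differently.

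\textbf{Quasi-isometry.} The paper cites Mimura--Ozawa--Sako--Suzuki for the quasi-isometry invariance of the vanishing of $b^{(2)}_2$ and applies it directly to the given finitely generated groups. No finiteness hypotheses are needed there, so the obstacle you anticipate does not arise. Your detour is still sound: coherence of $G$ gives finite presentability, that transfers to $H$, and then a finite-type invariance theorem applies. One correction is needed. Pansu's theorem requires finite classifying spaces, not merely type $\FP(\Q)$. You do get these, because the hypothesis bounds the \emph{integral} virtual cohomological dimension, and a finitely presented group of type $\FP(\Z)$ is of type $F$. Cite Pansu or Mimura--Ozawa--Sako--Suzuki rather than the other alternatives you list.

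\textbf{Profinite isomorphism.} The paper does not invoke Hughes--M\"oller--Varghese; it computes $b^{(2)}_2$ directly. It passes to torsion-free finite-index subgroups with isomorphic completions. Goodness of virtually compact special groups gives equal mod-$p$ Betti numbers, hence $\chi(G)=\chi(H)$. Profinite invariance of $b^{(2)}_1$ (Bridson--Reid--Conder) then yields $b^{(2)}_2(G)=\chi(G)+b^{(2)}_1(G)=\chi(H)+b^{(2)}_1(H)=b^{(2)}_2(H)$.

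Your route through their Corollary A.9 and the virtually free-by-cyclic item of \cref{cor: equivalences} is valid and shorter, but it outsources the key step. The paper's computation proves more: $b^{(2)}_2$ itself is a profinite invariant here. It also uses only goodness and type $\FP(\Z)$ beyond virtual RFRS, which is why the paper's remark extends item (3) to, e.g., two-dimensional Coxeter groups.

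Your first suggestion, a general ``profinite invariance of vanishing of $L^2$-Betti numbers in low degrees'', should not be relied on as stated. In degree $2$ it is not available without exactly this Euler-characteristic argument.
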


We will denote by $\widehat G$ the profinite completion of $G$, i.e.\ the topological group 
\[
    \varprojlim_{H \trianglelefteqslant_{\mathrm{f.i.}} G} G/H,
\]
where $H$ runs over all finite-index normal subgroups of $G$. A group is called \emph{good} in the sense of Serre if the natural map 
\[
    \H_{\mathrm{cts}}^i \left( \widehat G ; M \right) \to \H^i(G; M)
\]
is an isomorphism for all $i$ and all finite $G$-modules $M$. The cohomology $\H_{\mathrm{cts}}^i$ denotes the continuous cohomology of $\widehat G$, which means that cohomology is computed with respect to continuous cochains, where $M$ carries the discrete topology.

\begin{rem}
    For the proofs of items \ref{item: QI} and \ref{item: ME}, the virtual cohomological dimension at most $2$ assumption can be weakened to rational cohomological dimension at most $2$. Moreover, \ref{item: profinite equivalence} holds with the assumption ``virtually compact special" replaced with ``virtually \{RFRS of type $\FP(\Z)$\} and good in the sense of Serre". For instance, Coxeter groups are known to be good by \cite[Proposition 3.6]{HughesMollerVarghese_CoxterProfiniteProps}, and as mentioned above they are virtually RFRS, so \cref{cor: rigidity}\ref{item: profinite equivalence} applies to two-dimensional Coxeter groups. It is not known whether Coxeter groups are virtually \emph{compact} special.
\end{rem}

\begin{proof}[Proof of \cref{cor: rigidity}]
    If $G$ and $H$ are quasi-isometric (resp.\ measure equivalent), then $\b{2}(G) = 0$ if and only if $\b{2}(H) = 0$ by \cite[Corollary 6.3]{MOSS_qil2vanishing} (resp.\ \cite[Theorem 6.3]{Gaboriau2002}). By \cref{cor: equivalences}, $G$ is coherent if and only if $H$ is.

    Now suppose that $G$ and $H$ are both virtually compact special and have isomorphic profinite completions. We may assume that both $G$ and $H$ are infinite so that their zeroth $L^2$-Betti numbers vanish, and we may also assume that $\cd(G) = \cd(H) \leqslant 2$ by passing to suitable torsion-free finite-index subgroups with isomorphic profinite completions (see, e.g., \cite[Theorem 1.3.20]{Wilkes_ProfiniteBook}). Since virtually compact special groups are good \cite[Corollary 6.3]{Schreve_AtiyahVCS}, we have
    \begin{align*}
        \chi(G) &= b_0(G;\mathbb F_p) - b_1(G;\mathbb F_p) + b_2(G;\mathbb F_p) \\
        &= b_0(\widehat G;\mathbb F_p) - b_1(\widehat G;\mathbb F_p) + b_2(\widehat G;\mathbb F_p) \\
        &= b_0(\widehat H;\mathbb F_p) - b_1(\widehat H;\mathbb F_p) + b_2(\widehat H;\mathbb F_p) \\
        &= b_0(H;\mathbb F_p) - b_1(H;\mathbb F_p) + b_2(H;\mathbb F_p) = \chi(H).
    \end{align*}
    for all primes $p$. Since the first $L^2$-Betti number is a profinite invariant \cite[Corollary 3.3]{BridsonReidConder_FuchsianProfinite}, we have
    \[
        \b{2}(G) = \chi(G) + \b{1}(G) = \chi(H) + \b{1}(H) = \b{2}(H).
    \]
    Therefore, \cref{cor: equivalences} implies that $G$ is coherent if and only if $H$ is coherent. \qedhere
\end{proof}

%%% PARAFREE
\subsection{The Parafree Conjecture and coherence}

A group $G$ is \emph{parafree} if it is residually nilpotent and there exists a free group $F$ such that $G/\gamma_n(G) \cong F/\gamma_n(F)$ for all $n \geqslant 1$. The groups $\gamma_n(G)$ are the lower central series terms of $G$ and are defined recursively by $\gamma_1(G) = G$ and $\gamma_{n+1}(G) = [G,\gamma_n(G)]$ for all $n \geqslant 1$. By a result of Reid \cite[Theorem 9.2]{Reid_parafreeRFRS}, parafree groups are RFRS.

The most famous open problem in the study of parafree groups is Baumslag's Parafree Conjecture, which predicts that $\H_2(G;\Z) = 0$ for all finitely generated parafree groups $G$. The Strong Parafree Conjecture predicts that $\H_2(G;\Z) = 0$ and $\cd(G) \leqslant 2$ for every finitely generated parafree group $G$. Both versions of the conjecture are open, and there are many examples of parafree groups of cohomological dimension at most two with vanishing second integral homology.

As a consequence of our main result, we show that in cohomological dimension two, the Parafree Conjecture is equivalent to the lesser known conjecture that parafree groups are coherent.

\begin{cor}\label{cor: parafree}
    If $G$ is a parafree group with $\cd(G) \leqslant 2$, then $\H_2(G;\Z) = 0$ if and only if $G$ is coherent.
\end{cor}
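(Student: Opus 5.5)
We reduce the statement to \cref{cor: equivalences} with $k = \Q$, by showing that for a parafree group $G$ with $\cd(G) \leqslant 2$ the condition $\H_2(G;\Z) = 0$ is equivalent to $\b{2}(G) = 0$.

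First, the hypotheses of \cref{cor: equivalences} hold. By Reid's theorem, $G$ is RFRS, and $\cd_\Q(G) \leqslant \cd(G) \leqslant 2$. We may assume $G$ is finitely generated; this is implicit in the Parafree Conjecture, and it is the setting of the introductory version of the corollary. Parafree groups share their nilpotent quotients with a free group $F$ of some rank $r$. In particular $G^{ab} \cong \Z^r$, so $b_1(G;\Q) = r$, and $G$ is torsion-free and infinite unless $r = 0$, in which case residual nilpotence forces $G$ trivial and the statement is trivial.

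The key computation uses the Euler characteristic. We first need $G$ to be of type $\FP(\Q)$, so that $\chi(G) = 1 - r + b_2(G;\Q)$ makes sense. In general only $\FP_1$ is known, so the plan handles each direction on its own terms.

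If $G$ is coherent, then $G$ is finitely presented, hence of type $\FP(\Z)$ since $\cd(G) \leqslant 2$. \cref{cor: equivalences} then gives $\b{2}(G) = 0$. We also have $\b{0}(G) = 0$, and $\b{1}(G) \leqslant b_1(G;\Q) - 1 = r - 1$. This last bound is the standard estimate $\b{1} \leqslant b_1 - 1$ for infinite groups; alternatively, it follows from Lück approximation, since finite-index subgroups $G_i$ of the RFRS chain satisfy $b_1(G_i) \geqslant$ growth bounded by the rank-$r$ free behaviour, the finite quotients matching those of $F$. The cleanest route is that the pro-nilpotent completion agrees with that of $F$. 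Then
\[
    1 - r + b_2(G;\Q) = \chi(G) = -\b{1}(G) \geqslant 1 - r,
\]
which does not yet force $b_2 = 0$. So instead we use \cref{cor: equivalences}\ref{item: v free by Z}: $G$ is virtually free-by-cyclic, so $\b{1}(G) = 0$ whenever $G$ is not free. Hence $\chi(G) = 0$ and $b_2(G;\Q) = r - 1$, which contradicts the nilpotent-quotient data unless $r = 1$, that is, unless $G \cong \Z$; otherwise $G$ is free. To settle integral $\H_2$, note that $\H_2(G;\Z)$ is torsion-free because $\cd \leqslant 2$, and it injects into $\H_2(G;\Q)$. Alternatively, finitely generated free-by-cyclic parafree groups can be handled directly: their rational $b_2$ equals $b_1 - 1 + \b{1}$-type data, and $b_1 = r$. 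The main obstacle is precisely this direction, and the plan is to exploit the fact that $\H_2(G;\Q)$ injects into $\varprojlim$ of nilpotent quotient data via Stallings' theorem. The map $F \to G$ inducing isomorphisms on nilpotent quotients gives, by Stallings, that $\H_2(G;\Q) \to \H_2(\widehat{G}_{\mathrm{nil}};\Q)$ relates to $\H_2(F) = 0$. For virtually free-by-cyclic $G$, one would show $\H_2(G;\Q)$ is detected in nilpotent quotients, using the free-by-cyclic structure and Mayer--Vietoris.

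Conversely, suppose $\H_2(G;\Z) = 0$. The first task is to show $G$ is finitely presented, which we would extract from the finite generation of $G$ together with $\cd(G) \leqslant 2$ and the vanishing of $\H_2$, via the Jaikin-Zapirain--Linton criterion for $\FP_2$. Granting this, $\chi(G) = 1 - r \leqslant 0$. Now the nilpotent quotients, via a mod-$p$ Lück or Bergeron--Gaboriau-type approximation along the $p$-lower central series, give $\b{1}(G) \geqslant r - 1$; alternatively, one can use Jaikin-Zapirain's result that $\b{1}$ of a parafree group is bounded below by $r - 1$. Therefore $\b{2}(G) = \chi(G) + \b{1}(G) \geqslant 0$. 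Combined with the upper bound $\b{1}(G) \leqslant r - 1$, this gives $\b{2}(G) = 0$ exactly, and \cref{cor: equivalences} yields coherence.
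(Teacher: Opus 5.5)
\textbf{There is a genuine gap, mainly in the direction coherent $\Rightarrow$ $\H_2(G;\Z)=0$.} Since $G$ is finitely presented with $\cd(G)\leqslant 2$, the Euler characteristic gives $b_2(G;\Q)=r-1-\b{1}(G)+\b{2}(G)$. Once the main theorem gives $\b{2}(G)=0$, you therefore need the \emph{lower} bound $\b{1}(G)\geqslant r-1$. That is exactly the bound you attribute to Jaikin-Zapirain and use in the converse, where it is irrelevant.

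Your substitute claim, that a virtually free-by-cyclic group has $\b{1}(G)=0$ unless it is free, is false. The free kernel in \cref{cor: equivalences}\ref{item: v free by Z} need not be finitely generated, and $\b{1}$ vanishes for (finitely generated free)-by-cyclic groups but not in general. For example, Baumslag's non-free parafree group $\pres{a,b,c}{a=[c,a][c,b]}$ is coherent (it is one-relator), has cohomological dimension $2$, $\b{2}=0$ and $\chi=-1$, hence $\b{1}=1$.

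Even granting your claim, it would give $b_2(G;\Q)=r-1\neq 0$, the opposite of what you want. The assertion that this contradicts the nilpotent quotients and forces $G$ to be free is unsupported, and the same example refutes it. The Stallings plan also points the wrong way. Stallings' theorem deduces isomorphisms of nilpotent quotients from an $\H_1$-isomorphism plus an $\H_2$-epimorphism, not conversely. Since $G$ and $F$ have the same nilpotent quotients and $\H_2(F)=0$, if those quotients detected $\H_2(G;\Q)$ the Parafree Conjecture would be automatic.

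\textbf{The missing input} is a parafree-specific inequality $b_2(G;k)\leqslant b_2^{\Dk{G}}(G)$. The paper imports it from the proof of Fisher--Klinge's Corollary~D, applied to $b_2^{\Dk{G}}(G)=0$ for \emph{every} field $k$; this is why \cref{cor: hom coherence implies vanishing} is used over all fields. Within your $k=\Q$ framework you can supply it by L\"uck approximation, as follows. $G$ is finitely presented and residually $p$. Take a normal chain $G_i$ of $p$-power index with trivial intersection. Then $\widehat{(G_i)}_p$ is open of index $[G:G_i]$ in $\widehat G_p\cong\widehat F_p$, hence free pro-$p$ of rank $1+[G:G_i](r-1)$. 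This forces $b_1(G_i;\Q)=1+[G:G_i](r-1)$, so $\b{1}(G)=r-1$ and $b_2(G;\Q)=\b{2}(G)=0$.

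\textbf{The converse has gaps too.} First, $\b{1}\leqslant b_1-1$ is not a general estimate. The standard bound is $\b{1}\leqslant d(G)-1$, and for a non-free parafree group $d(G)>r$. The approximation argument above does give $\b{1}\leqslant r-1$, but only once $G$ is finitely presented. Second, finite presentability (or type $\FP_2(\Q)$) of $G$ is never established. The Jaikin-Zapirain--Linton criterion takes information about $\b{2}$ as input, so you would first have to bound $\b{2}(G)$ by $b_2(G;\Q)=0$, and at that point the Euler characteristic detour is unnecessary. The paper simply cites Fisher--Klinge's Corollary~D for this direction.
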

\begin{proof}
    By \cite[Corollary D]{FisherKlinge_RPVN}, if $\H_2(G;\Z) = 0$, then $G$ is coherent. Conversely, if $G$ is coherent, then $b_2^{\Dk{G}}(G) = 0$ for all fields $k$ by \cref{cor: hom coherence implies vanishing}. By the proof of \cite[Corollary D]{FisherKlinge_RPVN}, this implies that $\H_2(G;\Z) = 0$. \qedhere
\end{proof}

\appendix
\section{Cubulated locally quasi-convex hyperbolic groups are virtually free-by-cyclic}\label{sec: appendix}
\smallskip
\begin{center}by \textsc{Marco Linton}\end{center}
\medskip

In this appendix we use \cref{thm: B} to prove the following theorem.

\begin{thm}
\label{main}
If $G$ is a virtually compact special and locally hyperbolic group, then $G$ is virtually an extension of a free product of free and surface groups by $\Z$.
\end{thm}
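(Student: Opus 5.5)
Pass to a finite-index subgroup and assume $G$ is compact special; in particular $G$ is torsion-free, RFRS, of type $\mathrm{F}$, and hyperbolic (locally hyperbolic groups are hyperbolic, being finitely generated). The plan is in four steps.

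\textbf{Step 1: finiteness of subgroups.} Show that every finitely generated subgroup of $G$ is of type $\FP_\infty(\Q)$ (indeed of type $\FP(\Q)$). Local hyperbolicity says every finitely generated subgroup $H \leqslant G$ is hyperbolic, hence finitely presented and of type $\FP_\infty$, since hyperbolic groups have Rips complexes. So the finiteness hypothesis of \cref{thm: coherence and L2 Betti}\ref{item: all subgps FPinf} holds for every $n$. Moreover $\cd(H) \leqslant \cd(G) < \infty$, so $H$ is of type $\FP(\Q)$.

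\textbf{Step 2: vanishing and fibring.} \cref{thm: coherence and L2 Betti}\ref{item: all subgps FPinf}, applied to every subgroup, gives $\b{i}(H) = 0$ for all $i > 1$ and all subgroups $H \leqslant G$. By \cref{thm: coherence and L2 Betti}\ref{item: finite cd subnormal} there is a subnormal series with free bottom term and cyclic or finite factors. I want something more precise: a virtual fibration $1 \to K \to G' \to \Z \to 1$ with $K$ finitely generated. If $\b{1}(G) = 0$, then all $L^2$-Betti numbers vanish, and Kielak's fibring theorem for RFRS groups (via \cref{thm: L2 Novikov}) gives a finite-index $G'$ and $\chi \colon G' \to \Z$ with finitely generated kernel of type $\FP_\infty(\Q)$. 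If $\b{1}(G) > 0$, then $G$ is virtually a nontrivial free product, or else a one-ended group which I would treat through its JSJ/hierarchy. The cleanest route is the free-product decomposition: by Grushko, reduce to the one-ended factors, which are again locally hyperbolic and compact special.

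\textbf{Step 3: identifying the fibre.} With $K = \ker \chi$ finitely generated in a hyperbolic $G'$, local hyperbolicity makes $K$ hyperbolic and of type $\mathrm F$. It is also normal and infinite-index in $G'$. A hyperbolic normal subgroup with quotient $\Z$ has $\cd(K) = \cd(G') - 1$, and by Step 2 $K$ has vanishing $L^2$-Betti numbers in degrees above $1$. I expect the main obstacle to be showing $\cd(K) \leqslant 2$ and that $K$ is a free product of free and surface groups. My first attempt: apply \cref{thm: coherence and L2 Betti}\ref{item: finite cd subnormal} to $K$ itself, so $K$ is an iterated extension of a free group by cyclic groups, all terms finitely generated by local hyperbolicity. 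A hyperbolic group with an infinite finitely generated normal subgroup of infinite index and quotient $\Z$ must have that normal subgroup of type $\FP$ with a duality-like structure. Iterating, a hyperbolic group cannot contain $F \rtimes \Z \rtimes \Z$-type subgroups with finitely generated kernels, since these contain $\Z^2$ or violate hyperbolicity via Mosher/Bestvina--Feighn-type arguments. So the series has length at most two, and $K$ is either free or virtually (finitely generated free)-by-$\Z$ with hyperbolic total space. In the latter case, a finitely generated normal free subgroup of a hyperbolic group forces $\cd(K) = 2$ with $K$ a Poincaré duality group of dimension $2$, i.e.\ a surface group (Bieri--Eckmann, Eckmann--Müller--Linnell).

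\textbf{Step 4: assembly.} Grushko decompose $K$ into free and one-ended factors, and show each one-ended factor is a surface group by the previous paragraph. This yields $G' = K \rtimes \Z$ with $K$ a free product of free and surface groups, as claimed. The delicate point throughout is Step 3: controlling the one-ended factors of the fibre using only \cref{thm: B} and hyperbolicity.
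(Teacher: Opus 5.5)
Your plan has a genuine gap at Step 2: you aim for $G' = K \rtimes \Z$ with $K$ \emph{finitely generated}, and this cannot exist whenever $b_1^{(2)}(G) > 0$. A group with an infinite finitely generated normal subgroup and quotient $\Z$ has vanishing first $L^2$-Betti number (Gaboriau, L\"uck), and positivity of $b_1^{(2)}$ persists in finite-index subgroups. Free groups and closed hyperbolic surface groups satisfy all the hypotheses and have $b_1^{(2)} > 0$. The theorem holds for them only because the kernel in its conclusion is allowed to be infinitely generated.

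Your claim that $b_1^{(2)}(G)>0$ forces $G$ to be virtually a nontrivial free product is false, since surface groups are one-ended. ``Treat it through its JSJ/hierarchy'' does not supply an argument. The missing idea is the paper's, in three steps:
\begin{enumerate}
\item By \cref{embedding} (Kielak--Linton), a finite-index $H \leqslant G$ embeds in an acylindrical HNN extension $L = H*_\psi$ over a quasi-convex free subgroup. Here $L$ is hyperbolic, virtually compact special, and has \emph{all} $L^2$-Betti numbers zero.
\item \cref{fibring} then gives a finite-index $N \rtimes \Z \leqslant L$ with $N$ of type $\FP_2(\Q)$. Since $N$ is not a subgroup of $G$, local hyperbolicity of $G$ says nothing about it directly. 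You need \cref{local_hyp} (a dominating tree from Jaikin-Zapirain--Linton plus the Bestvina--Feighn combination theorem, using acylindricity of the Bass--Serre tree) to conclude that $N$ is hyperbolic.
\item The kernel for $G$ is then $G \cap N'$. It is typically infinitely generated, and it is a free product of free and surface groups by Kurosh.
\end{enumerate}

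Step 3 also fails, even when $b_1^{(2)}(G)=0$, where your fibre $K \leqslant G$ is finitely generated and hence hyperbolic. There are three problems.
\begin{enumerate}
\item The subnormal series from \cref{thm: coherence and L2 Betti}\ref{item: finite cd subnormal} need not have finitely generated terms. Local hyperbolicity makes finitely generated subgroups hyperbolic; it does not make subgroups finitely generated. So your ``length at most two'' argument has nothing to act on.
\item No known theorem says that a finitely generated free normal subgroup forces $K$ to be a $\mathrm{PD}^2$ group.
\item The monodromy only permutes the Grushko factors of $K$ up to conjugacy. After passing to a power, you must still show that a one-ended hyperbolic group with hyperbolic mapping torus is a surface group.
\end{enumerate}
That last step is exactly \cite[Proposition 5.4]{DKM25}: a hyperbolic fibre of a hyperbolic mapping torus has a finite-index characteristic subgroup that is a free product of free and surface groups. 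It rests on JSJ/Rips--Sela theory and cannot be extracted from \cref{thm: B} and hyperbolicity alone.
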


We point out that extensions of free products of free and surface groups by $\Z$ have cohomological dimension at most 3. Thus \cref{main} yields a bound on the virtual cohomological dimension of virtually compact special groups in which every finitely generated subgroup is hyperbolic. This should be compared with a question of Wise \cite[Problem 28]{Wise_anInvitation} which asks whether coherent torsion-free hyperbolic groups of cohomological dimension at least four exist.

\begin{thm}
\label{main_cor}
If $G$ is a virtually compact special locally hyperbolic group, then $G$ is locally quasi-convex if and only if $G$ contains no \{finitely generated free\}-by-$\Z$ or \{closed surface\}-by-$\Z$ subgroups. 

In particular, if $G$ is a virtually compact special and locally quasi-convex hyperbolic group, then $G$ is virtually free-by-cyclic.
\end{thm}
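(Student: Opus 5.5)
The plan is to prove Theorem \ref{main_cor} by combining Theorem \ref{main} with a finer analysis of which fibre groups can occur. The first step is to pass to a finite-index subgroup. By Theorem \ref{main}, $G$ has a finite-index subgroup $G'$ that fits into an extension $1 \to K \to G' \to \Z \to 1$. Here $K = A_1 * \dots * A_r * F$ is a free product of closed surface groups $A_i$ and a free group $F$.

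For the ``only if'' direction, suppose $G$ is locally quasi-convex. A quasi-convex subgroup of a hyperbolic group has finite height, and in particular cannot contain an infinite normal subgroup of infinite index. Now let $P \leqslant G$ be a \{finitely generated free\}-by-$\Z$ or \{closed surface\}-by-$\Z$ subgroup. Then $P$ is finitely generated, so it is quasi-convex in $G$. The fibre of $P$ is a finitely generated subgroup of $P$, hence also quasi-convex in $G$, and therefore quasi-convex in $P$. But the fibre is infinite, normal and of infinite index in the hyperbolic group $P$, which is a contradiction.

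For the ``if'' direction, assume $G$ has no such subgroups and let $H \leqslant G$ be finitely generated. I want to show $H$ is quasi-convex. It suffices to treat $H \cap G'$. Let $N = H \cap K$; this is a subgroup of a free product of free and surface groups.

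\emph{Case 1: $H \cap G' \leqslant K$.} By the Kurosh subgroup theorem, $H$ is a free product of a free group and subgroups of conjugates of the $A_i$. Finitely generated subgroups of surface groups are either finite-index surface groups or free. I would then argue that finitely generated subgroups of $K$ are quasi-convex in $G'$ unless they contain a virtual fibre. The tool would be a tameness result for cube-complex/hierarchy structure, or more concretely Linton's hierarchy lemma (Lemma \ref{lem:type_F}) combined with the Bestvina--Feighn--Mitra theory of distortion in free/surface-by-cyclic groups. In hyperbolic free-by-cyclic and surface-by-cyclic groups, a finitely generated subgroup of the fibre is quasi-convex unless it carries a periodic conjugacy class, that is, unless it virtually contains an invariant subgroup. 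In the latter case we obtain a \{finitely generated free or surface\}-by-$\Z$ subgroup, which is excluded.

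\emph{Case 2: $H \cap G'$ maps nontrivially to $\Z$.} Then $H \cap G'$ is an ascending HNN extension or splits as $N \rtimes \Z$. If $N$ is finitely generated, $H$ is itself a forbidden subgroup unless $N$ is trivial or $\Z$, and in those cases $H$ is virtually cyclic and hence quasi-convex. If $N$ is infinitely generated, I would use the hierarchy and tameness to show that $H$ is quasi-convex. Roughly, $H$ acts on the Bass--Serre tree of $K$ and has finite height intersections, so combination theorems apply.

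For the final statement: a locally quasi-convex group is locally hyperbolic, so the equivalence applies, and $G$ contains no surface-by-$\Z$ subgroups. In $K = A_1 * \dots * A_r * F$, each $A_i$ is a closed surface group. The monodromy permutes the conjugacy classes of the free factors $A_i$ only up to finitely many, so some power of $t$ preserves $A_1$ up to conjugacy. This gives a surface-by-$\Z$ subgroup, which is a contradiction. Hence $r = 0$, $K$ is free, and $G$ is virtually free-by-cyclic.

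The main obstacle is the converse quasi-convexity argument in Case 2. This will need a careful combination theorem for subgroups of fibred hyperbolic groups whose fibre is a free product. I expect to handle it by reducing to the known free-by-cyclic and surface-by-cyclic results via Kurosh.
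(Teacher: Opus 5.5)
Your ``only if'' direction is correct and is the paper's argument: a non-trivial finitely generated normal subgroup of infinite index in a hyperbolic group is never quasi-convex. The rest of the proposal has genuine gaps.

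\textbf{The final statement.} You write the fibre from Theorem~\ref{main} as $K = A_1 * \dots * A_r * F$ with \emph{finitely many} surface factors, and then use pigeonhole. Theorem~\ref{main} gives no such finiteness. In its proof, the fibred group $N'\rtimes\Z$ is a finite-index subgroup of the HNN extension $L$, not of $G$. The fibre of the resulting finite-index subgroup of $G$ is $G\cap N'$. By Kurosh, this is a free product of a free group and possibly \emph{infinitely many} surface groups. In fact, when $G$ is locally quasi-convex, your own ``only if'' argument shows that $K$ cannot be finitely generated unless it is trivial. With infinitely many surface factors, the monodromy can act on their conjugacy classes with no finite orbit, for instance by a shift. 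So excluding surface-by-$\Z$ subgroups only tells you that no surface factor is periodic; it does not make $K$ free.

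This is exactly the case the paper handles. Take the $G'$-set $X$ of conjugates of the surface factors. Non-periodicity means the stabilisers are exactly those surface groups. Then \cite[Proposition 6.5]{JaikinLintonSanchez_OneRelProd}, together with the sequence $0\to\omega_\Q(X)\to\Q[X]\to\Q\to 0$, gives $\cd_\Q(G')\leqslant 2$. Since $b_2^{(2)}(G')=0$ by Theorem~\ref{vanishing}, the Kielak--Linton theorem \cite{KielakLinton_FbyZ} shows that $G'$ is virtually free-by-cyclic. This argument is what you are missing, and it yields the ``in particular'' statement directly.

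\textbf{The ``if'' direction.} This direction is not actually proved.
\begin{itemize}
\item In Case~1 you appeal to Mitra/Bestvina--Feighn-type dichotomies. Those are results about hyperbolic groups with \emph{finitely generated} free or surface fibre. Here $G'$ has a free-product fibre that is in general infinitely generated, so they do not apply as stated.
\item Case~2 with $N$ infinitely generated is the heart of the matter, and you leave it to unspecified ``tameness and combination theorems''.
\item There are also slips in Case~2. A finitely generated $N\leqslant K$ need not be free or a surface group; $A_1*\Z$ is an example, and it needs the Grushko-permutation argument instead. Also, $N\cong\Z$ would not make $H$ virtually cyclic.
\end{itemize}
The paper never attempts a direct quasi-convexity argument for (free product)-by-$\Z$ groups. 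It first reduces to the virtually free-by-cyclic case via the cohomological-dimension argument above, and then obtains local quasi-convexity from \cite{Li25}. Without this reduction, or a complete combination argument, your proposal establishes neither the ``if'' direction nor the final statement.
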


Note that by work of Wise \cite{Wise_structureQCH}, a locally quasi-convex hyperbolic group which admits a hierarchy is virtually compact special. In particular, such groups must be virtually free-by-cyclic by \cref{main_cor} and thus have virtual cohomological dimension at most 2. This solves a problem of Wise \cite[Problem 29]{Wise_anInvitation} in the cubulated case. \cref{main_cor} also solves \cite[Conjecture 1.6]{AW26} in the cubulated case.

It is tempting to want to relax the assumption that $G$ be locally hyperbolic in \cref{main} to the assumption that $G$ be coherent. We show that this can be done in \cref{prop:generalise} if there exists no compact special hyperbolic group $N\rtimes \Z$ so that $\cd_{\Q}(N)\in \{2, 3\}$ and $N$ is of type $F$ but not hyperbolic. Recall that a group $G$ is said to have type $F$ if it admits a compact classifying space. We point out that Italiano--Martelli--Migliorini showed in \cite{IMM_5mfld} that there is such a group, but with $\cd(N) = 4$. 

\subsection{The proof}

We first state all the ingredients we shall need for the proof of \cref{main}. We begin by restating \cref{thm: B} with the hypotheses we shall use.

\begin{thm}
\label{vanishing}
If $G$ is a virtually special group in which every finitely generated subgroup has type $\FP_{\infty}(\Q)$, then $b_i^{(2)}(G) = 0$ for all $i>1$.
\end{thm}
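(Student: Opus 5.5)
This is a restatement of \cref{thm: coherence and L2 Betti}\ref{item: all subgps FPinf} with $k=\Q$, so the proof is a reduction to that result. The plan is to pass to a finite-index special subgroup and then check the hypotheses there.

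First, since $G$ is virtually special, I choose a finite-index subgroup $G' \leqslant G$ that is special. Special groups embed in right-angled Artin groups, and right-angled Artin groups are RFRS by Agol. The RFRS property passes to subgroups, so $G'$ is RFRS.

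Second, the hypothesis descends to $G'$: every finitely generated subgroup of $G'$ is a finitely generated subgroup of $G$, and hence is of type $\FP_\infty(\Q)$, in particular of type $\FP_{n+1}(\Q)$ for every $n$. Applying \cref{thm: coherence and L2 Betti}\ref{item: all subgps FPinf} with $k = \Q$ and letting $n$ be arbitrary gives $b_i^{\mathcal D_{\Q[G']}}(G') = 0$ for all $i > 1$. For locally indicable groups, and RFRS groups are locally indicable, $\mathcal D_{\Q[G']}$ is identified with the Linnell ring, and the Strong Atiyah Conjecture holds. So this Betti number equals $\b{i}(G')$, and hence $\b{i}(G') = 0$ for all $i>1$.

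Third, I transfer back to $G$ using multiplicativity of $L^2$-Betti numbers under finite index: $\b{i}(G') = [G:G']\,\b{i}(G)$. This gives $\b{i}(G)=0$ for $i>1$.

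None of these steps is a real obstacle. The only points needing care are that the RFRS and subgroup hypotheses both pass to the finite-index subgroup, and that the $\mathcal D_{\Q[G']}$-Betti numbers agree with the $L^2$-Betti numbers. The latter was already recorded in \cref{subsec: L2}.
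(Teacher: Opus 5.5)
Your proposal is correct and matches the paper's argument: the appendix gives no separate proof, presenting the statement as a restatement of \cref{thm: B}, i.e.\ of \cref{thm: coherence and L2 Betti}\ref{item: all subgps FPinf} with $k=\Q$ applied to a finite-index special (hence RFRS) subgroup $G'$, together with the identification $b_i^{\mathcal D_{\Q[G']}}(G')=\b{i}(G')$ from \cref{subsec: L2} and the commensurability invariance of vanishing $L^2$-Betti numbers, which are exactly the steps you spell out. One point is left implicit in both your write-up and the paper: a special group that is not finitely generated may only embed in an infinitely generated (but countable) right-angled Artin group. Such groups are still RFRS (diagonalise over the retractions onto the right-angled Artin groups of finite subgraphs), and in every application in the appendix $G$ is finitely generated anyway.
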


The second ingredient we shall need is due to Kielak and the author \cite[Proposition 6.4]{KielakLinton_FbyZ}.

\begin{prop}
\label{embedding}
If $G$ is a non-free hyperbolic and compact special group with $b_i^{(2)}(G) = 0$ for all $i\geqslant 2$, then there is an acylindrical HNN-extension $L = G*_{\psi}$ of $G$ over a quasi-convex free subgroup so that $L$ is hyperbolic and virtually compact special and $b_i^{(2)}(L) = 0$ for all $i$.
\end{prop}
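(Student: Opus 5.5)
The plan is to build $L$ as an HNN-extension $G*_{\psi}$, where $\psi \colon F \to F'$ identifies two free subgroups $F, F' \leqslant G$ of rank $k$. I would choose $k$ so that the $L^2$-Euler characteristic of $L$ is zero, and then arrange that the first $L^2$-Betti number of $L$ vanishes.

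\textbf{Step 1: the numerical choice.} Since $G$ is compact special, it is torsion-free and of type $F$, and it satisfies the Strong Atiyah Conjecture, so $\b{1}(G) \in \Z_{\geqslant 0}$. Since $G$ is non-free it is infinite, so $\b{0}(G)=0$. Together with the hypothesis $\b{i}(G)=0$ for $i \geqslant 2$, this gives $\chi(G) = -\b{1}(G)$. For an HNN-extension over $F_k$ we have $\chi(L) = \chi(G) - (1-k)$. I would therefore take $k = \b{1}(G)+1$, which gives $\chi(L) = 0$.

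\textbf{Step 2: reduce to a surjectivity statement.} The Mayer--Vietoris sequence with coefficients in $\mathcal D := \mathcal D_{\Q[L]}$ reads
\[
0 \to \H_2(L;\mathcal D) \to \H_1(F;\mathcal D) \xrightarrow{\ \partial\ } \H_1(G;\mathcal D) \to \H_1(L;\mathcal D) \to 0 .
\]
Here $\partial$ is the difference of the map induced by inclusion and the map induced by $t\psi$. The higher terms vanish because $F$ is free and $\b{i}(G)=0$ for $i\geqslant 2$, the latter using flatness of $\mathcal D$ over $\mathcal D_{\Q[G]}$. Also $\H_0(F;\mathcal D)=0$. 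Hence $\b{i}(L)=0$ for $i \geqslant 3$. Since $\chi(L)=\b{2}(L)-\b{1}(L)=0$, it suffices to prove that $\partial$ is surjective. The two modules have $\mathcal D$-dimensions $k-1=\b{1}(G)$ and $\b{1}(G)$ respectively. So $\partial$ is surjective if and only if it is an isomorphism, which is a ``generic'' condition.

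\textbf{Step 3: geometric choices.} I would pick $F = \langle a_1^{N},\dots,a_k^{N}\rangle$ and $F'=\langle b_1^{N},\dots,b_k^{N}\rangle$ for suitable infinite-order elements and large $N$. By standard ping-pong and Arzhantseva-type arguments in hyperbolic groups, these subgroups are free, quasi-convex, malnormal, and satisfy $F \cap gF'g^{-1}=1$ for all $g$. This makes the HNN-extension acylindrical, so $L$ is hyperbolic by the Bestvina--Feighn combination theorem. Since $G$ is compact special and the edge groups are quasi-convex and malnormal, Wise's quasi-convex hierarchy theorem (or the Haglund--Wise combination theorem) gives that $L$ is virtually compact special.

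\textbf{Main obstacle.} The hard step is guaranteeing surjectivity of $\partial$ while keeping the geometric conditions of Step 3. The map $\partial$ is a $k \times (\text{presentation})$ matrix over $\Q[L]$ involving Fox derivatives of the chosen words. I would first try to show that its image in $\H_1(G;\mathcal D)$ is spanned as follows. Choose $a_1,\dots,a_{k-1}$ whose classes generate $\H_1(G;\mathcal D_{\Q[G]})$ over $\mathcal D_{\Q[G]}$; this is possible because $\H_1$ is a quotient of the augmentation ideal tensored up. Then use the stable letter $t$ to show that the twisted term $t\cdot\psi_*$ cannot cancel these contributions. The reason is that $t$-degree, via Hughes-freeness of $\mathcal D$ over $\mathcal D_{\Q[\ker(L\to\Z)]}$, separates the two summands. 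Passing to large powers $a_i^N$ changes the classes only by the invertible factor $1+a_i+\dots+a_i^{N-1}$ in $\mathcal D$, so the generating property survives the geometric adjustments.
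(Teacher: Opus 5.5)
The paper does not prove this statement; it quotes it from Kielak--Linton. Your Steps 1 and 2 are sound: the choice $k=\b{1}(G)+1$ and the reduction to $\partial$ being an isomorphism both work. However, Step 3 is wrong as stated, and the decisive homological step is not actually proved.

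\textbf{Step 3.} $F=\langle a_1^N,\dots,a_k^N\rangle$ with $N\geqslant 2$ is never malnormal. Indeed $a_1\notin F$, since a basis element of a free group is not a proper power, yet $a_1^N\in F\cap a_1Fa_1^{-1}$. With the natural identification $\psi(a_i^N)=b_i^N$ this is fatal. By Britton's lemma $t^{-1}b_1t\neq a_1$, because $b_1\notin F'$. But $(t^{-1}b_1t)^N=a_1^N$, which cannot happen in a torsion-free hyperbolic group. So the fixed subtree of $a_1^N$ is infinite, the splitting is not acylindrical, and $L$ is not hyperbolic.

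The conflict between passing to powers (which your homological bookkeeping relies on) and malnormality is exactly what needs handling. It can be handled, because only one associated subgroup has to be malnormal. Keep $F=\langle a_1^N,\dots,a_k^N\rangle$, but take $F'$ to be a malnormal quasi-convex free subgroup of rank $k$ with $F\cap gF'g^{-1}=1$ for all $g\in G$. Every edge of the Bass--Serre tree has an $F'$-end. Two distinct edges at a vertex, one of which has its $F'$-end there, have trivial common stabiliser. Hence paths of length $3$ have trivial stabilisers, Bestvina--Feighn gives hyperbolicity, and Wise's quasi-convex hierarchy theorem (which needs only quasi-convex edge groups) gives virtual compact specialness.

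\textbf{The homological step.} Your ``main obstacle'' paragraph is misformulated in three ways. A single element has no class in $\H_1(G;\mathcal D)$; indeed $\H_1(\Z;\mathcal D)=0$. $\H_1(G;\mathcal D)$ is the \emph{kernel} of $I_{\Q[G]}\otimes_{\Q[G]}\mathcal D\to\mathcal D$, not a quotient. And $k-1$ elements cannot generate $F_k$.

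Here is a correct version.
\begin{enumerate}
\item Since $\H_0(G;\mathcal D_{\Q[G]})=0$, the space $I_{\Q[G]}\otimes_{\Q[G]}\mathcal D_{\Q[G]}$ has dimension $\b{1}(G)+1=k$. Pick $a_1,\dots,a_k\in G$ such that the elements $(a_i-1)\otimes 1$ form a basis. This forces the $a_i$ to be pairwise non-commensurable, as ping-pong requires.
\item $I_{\Q[F]}$ is free on the $a_i-1$, so the inclusion induces an isomorphism $\iota_*\colon\H_1(F;\mathcal D_{\Q[G]})\to\H_1(G;\mathcal D_{\Q[G]})$. Your unit-factor observation shows this survives replacing $a_i$ by $a_i^N$. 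Note that $F'$ needs no homological control.
\item For the twisted map, the identity $td=(tdt^{-1})t$ shows that in $\mathcal D_{\Q[G]}$-bases it has matrix $Ct$ with $C$ over $\mathcal D_{\Q[G]}$. So $\partial$ has matrix $A-Ct=A(1-A^{-1}Ct)$ with $A$ invertible.
\item Write $K=\ker(L\to\Z)$. The matrix $1-A^{-1}Ct$ is invertible over the skew Laurent series ring $\mathcal D_{\Q[K]}((t))$ by the geometric series. Hence it is invertible over the Ore field of fractions of $\mathcal D_{\Q[K]}[t^{\pm1}]$, which lies in $\mathcal D_{\Q[L]}$.
\end{enumerate}
This uses only $\bigoplus_n\mathcal U(K)t^n\subseteq\mathcal U(L)$ and the Atiyah conjecture for $L$. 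Appealing to Hughes-freeness, as you do, would require knowing that $L$ is locally indicable, which you have not shown.
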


Our third ingredient is Fisher's generalisation \cite[Theorem A]{Fisher_Improved} of Kielak's virtual fibring theorem \cite{KielakRFRS}.

\begin{thm}
\label{fibring}
If $G$ is a compact special group with $b_i^{(2)}(G) = 0$ for $i\leqslant n$, then $G$ has a finite index subgroup $H$ so that $H\cong N\rtimes \Z$ with $N$ of type $\FP_n(\Q)$. 
\end{thm}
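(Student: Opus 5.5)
The plan is to combine the Kielak/Jaikin-Zapirain/Okun--Schreve description of $L^2$-homology via Novikov homology (\cref{thm: L2 Novikov}) with Sikorav's theorem and the Bieri--Renz criterion recalled in \cref{sec: vanishing_homology}. First, since $G$ is compact special, it is RFRS and admits a compact classifying space. In particular the trivial right $\Q[G]$-module $\Q$ is of type $\FP_\infty$. Since $G$ satisfies the Strong Atiyah Conjecture and is locally indicable, we have $b_i^{\mathcal D_{\Q[G]}}(G)=\b{i}(G)=0$ for all $i\leqslant n$.

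Next, apply \cref{thm: L2 Novikov} with $k=\Q$ and $M=\Q$. This gives a finite-index subgroup $H\leqslant G$ and an integral character $\chi\colon H\to\Z$ such that
\[
\H_i\bigl(H;\widehat{\Q[H]}^{\chi}\bigr)=\bigl(\widehat{\Q[H]}^{\chi}\bigr)^{[G:H]\,\b{i}(G)}=0 \quad\text{for all } i\leqslant n.
\]
The group $H$ is again of type $\FP(\Q)$. By Sikorav's theorem in the higher-dimensional form quoted before \cref{cor: BNS of coherent}, this places $\chi\in\Sigma^n(H;\Q)$.

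To conclude via the Bieri--Renz theorem we need $\pm\chi\in\Sigma^n(H;\Q)$. Then $N=\ker\chi$ is of type $\FP_n(\Q)$, and after rescaling $\chi$ to be surjective we get $H\cong N\rtimes\Z$.

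The main obstacle is therefore obtaining $-\chi$ as well. \cref{thm: L2 Novikov}, as stated, only produces one character. What I would try first is to go back into the Okun--Schreve/Kielak argument. That argument does not produce a single character but a whole nonempty open set $U\subseteq\H^1(H;\R)$ of characters for which the Novikov homology vanishes. The set $U$ arises from choosing characters that avoid finitely many ``bad'' hyperplane-type conditions coming from the RFRS tower and Newton polytopes of the relevant matrices. Those conditions are invariant under $\psi\mapsto-\psi$, since they concern which vertices of polytopes are uniquely minimised and the polytope data is symmetric enough. So the same construction, run simultaneously for $\psi$ and $-\psi$, yields an open set $U$ with $U=-U$ (or at least $U\cap -U\neq\emptyset$). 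Since $\Sigma$-invariants are open and integral characters are dense in rays, we can then pick an integral $\chi\in U\cap -U$. For this $\chi$, both $\chi$ and $-\chi$ have vanishing Novikov homology up to degree $n$, which gives $\pm\chi\in\Sigma^n(H;\Q)$ and finishes the argument as above.

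If the symmetry of $U$ fails at this level, the fallback is to pass to a further finite-index subgroup in the RFRS tower. There one would apply the Okun--Schreve procedure to the chain complex for $-\chi$ as well, shrinking to a common finite-index subgroup $H'$ on which both vanishings hold for the restriction of $\chi$. This is legitimate because Novikov homology vanishing for a character passes to finite-index subgroups via Shapiro's lemma.
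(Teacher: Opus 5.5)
Your reduction is the standard route to Kielak's fibring theorem and its $\FP_n$ version \cite[Theorem A]{Fisher_Improved}. That citation is all the paper uses for this statement; it gives no proof of its own. \cref{thm: L2 Novikov} plus Sikorav gives $\chi\in\Sigma^n(H;\Q)$, and everything then hinges on getting $-\chi$ as well. Neither of your two arguments for that step works.

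\emph{First argument.} The claim that the conditions cutting out Kielak's good set are invariant under $\psi\mapsto-\psi$ is false. One layer is a genericity condition: $\psi$ must avoid finitely many hyperplanes so that minima on the relevant polytopes are attained at vertices. That layer is symmetric. But $\psi$ being good also requires that the coefficients at the $\psi$-\emph{minimal} vertices become invertible after perturbing $\psi$ further down the RFRS tower. These coefficients lie in the Hughes-free division ring of the kernel of the free abelianisation at that level; they are not scalars. For $-\psi$ the same is required of the coefficients at the $\psi$-\emph{maximal} vertices, which are different elements with unrelated good sets. So nothing gives $U=-U$, or even $U\cap(-U)\neq\emptyset$. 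Making ``run simultaneously'' work would mean redoing Kielak's induction while tracking both families of coefficients. You have not done this, and it does not follow from \cref{thm: L2 Novikov}.

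\emph{Fallback.} This cannot work as stated. By exactly the Shapiro argument you cite, $\widehat{\Q[H]}^{-\chi}\cong\Q[H]\otimes_{\Q[H']}\widehat{\Q[H']}^{-\chi|_{H'}}$, hence $\H_i(H';\widehat{\Q[H']}^{-\chi|_{H'}})\cong\H_i(H;\widehat{\Q[H]}^{-\chi})$. Passing to finite-index subgroups therefore never changes whether $-\chi$ is good, and nothing in \cref{thm: L2 Novikov} rules out $-\chi\notin\Sigma^n(H;\Q)$. The character itself has to be moved.

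\emph{The missing step.} Perturb $\chi$, using openness of $\Sigma^n$ together with the sharper form of Kielak's theorem, where one prescribes a starting character \cite{KielakRFRS}. Started at $-\chi$, it gives a finite-index $H'\leqslant H$ and a nonempty open set $U'\subseteq\H^1(H';\R)$ whose closure contains $-\chi|_{H'}$. For every $\psi\in U'$ and $i\leqslant n$, one has $\H_i(H';\widehat{\Q[H']}^{\psi})=0$. By the Shapiro isomorphism above, $\chi|_{H'}\in\Sigma^n(H';\Q)$. This set is open, so it contains a neighbourhood $V$ of $\chi|_{H'}$. Since $\chi|_{H'}\in\overline{-U'}$, the open set $V\cap(-U')$ is nonempty and contains an integral $\psi$. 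Then $\pm\psi\in\Sigma^n(H';\Q)$. After rescaling $\psi$ to be surjective, Bieri--Renz gives $H'\cong\ker\psi\rtimes\Z$ with $\ker\psi$ of type $\FP_n(\Q)$.

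\emph{Alternative fix.} The involution of $\mathcal D_{\Q[G]}$ extending $g\mapsto g^{-1}$ carries $\psi$-representable elements to $(-\psi)$-representable ones. So apply Kielak's theorem to the entries of a partial chain contraction over $\mathcal D_{\Q[G]}$ together with their involutes. This yields a single open set on which $\psi$ and $-\psi$ are both good.

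Either fix needs a form of the Kielak/Okun--Schreve theorem that produces an open set of good characters, which is strictly stronger than \cref{thm: L2 Novikov} as quoted in the paper.
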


We shall also need a criterion for when a subgroup of a hyperbolic group of type $\FP_2(\Q)$ is hyperbolic. This will allow us to conclude that the subgroup $N$ from \cref{fibring} is hyperbolic under certain hypotheses.

\begin{lem}
\label{local_hyp}
Let $G$ be a hyperbolic group and suppose that $G$ acts acylindrically on a tree $T$ so that vertex stabilisers are finitely generated locally hyperbolic and edge stabilisers are quasi-convex in $G$ and locally quasi-convex. Then subgroups of $G$ of type $\FP_2(\Q)$ are hyperbolic.
\end{lem}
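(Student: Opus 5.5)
To prove \cref{local_hyp}, I would let $H \leqslant G$ be a subgroup of type $\FP_2(\Q)$, and in particular finitely generated. I would show that $H$ is hyperbolic by analysing the induced action of $H$ on $T$ and assembling $H$ from hyperbolic pieces by a combination theorem.

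First, pass to a minimal $H$-invariant subtree $T_H \subseteq T$; this exists unless $H$ fixes a point. If $H$ fixes a vertex, then $H$ is a finitely generated subgroup of a vertex stabiliser, hence hyperbolic by local hyperbolicity, and we are done. Otherwise, $H$ is finitely generated, so $T_H/H$ is a finite graph, and $H$ splits as a finite graph of groups. Its edge groups are $H \cap G_e$ and its vertex groups are $H \cap G_v$.

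The key step is to show that these edge and vertex groups are finitely generated. I would use the $\FP_2(\Q)$ hypothesis via the standard Bieri-type argument: if $H$ is of type $\FP_2(\Q)$ and splits as a finite graph of groups with finitely generated edge groups, then the vertex groups are finitely generated. The edge groups themselves may a priori be infinitely generated. To handle this, I would first use the fact that $H$ is finitely generated and that the edge stabilisers of $G$ are locally quasi-convex. Concretely, I would take a finite generating set of $H$ and produce a finite graph of groups decomposition (a Bass--Serre "folding" argument, in the spirit of Bestvina--Feighn accessibility / Dunwoody resolutions) whose edge groups are finitely generated subgroups of the $G_e$. I would then show that $\FP_2(\Q)$ forces the actual decomposition to have finitely generated edge groups. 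I expect this to be the \emph{main obstacle}. One has to rule out ascending unions of finitely generated edge subgroups, which is where $\FP_2(\Q)$ (finite presentability over $\Q$ of the relation module) is used: an infinitely generated edge group would yield an infinitely generated $\H_2$-type relation module, contradicting $\FP_2(\Q)$. Note that $\FP_2(\Q)$ is weaker than finite presentability, so I would phrase everything homologically via Mayer--Vietoris for the graph of groups.

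Once edge groups $H\cap G_e$ are finitely generated, they are quasi-convex in $G_e$ by local quasi-convexity, and hence quasi-convex in $G$ because $G_e$ is quasi-convex in $G$. The vertex groups $H\cap G_v$ are then finitely generated by the Mayer--Vietoris argument, and hence hyperbolic because $G_v$ is locally hyperbolic. Acylindricity of the $G$-action on $T$ restricts to acylindricity of the $H$-action on $T_H$. We now have $H$ as a finite acylindrical graph of hyperbolic groups with quasi-convex (in particular finitely generated, hence quasi-isometrically embedded in the vertex groups) edge groups. The Bestvina--Feighn combination theorem, in its acylindrical form (the annuli flare condition holds automatically under acylindricity with quasi-convex edge groups, cf.\ Kapovich), then gives that $H$ is hyperbolic.
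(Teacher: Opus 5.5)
There is a genuine gap at exactly the step you flag as the main obstacle, and it cannot be filled. $\FP_2(\Q)$ does \emph{not} force the splitting of $H$ induced by the minimal subtree $T_H\subseteq T$ to have finitely generated edge groups $H\cap G_e$.

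A simple example shows this. Let $F_2=\langle a,b\rangle$ act on a line through the map $a\mapsto 0$, $b\mapsto 1$. This is a cocompact action of a group of type $F$, yet its vertex and edge group is $\llangle a\rrangle$, which has infinite rank. Mayer--Vietoris gives no contradiction: $\H_1(\llangle a\rrangle;\Q)\cong\Q[t^{\pm1}]$, and $1-t$ is injective with cokernel $\Q$. So your proposed ``infinitely generated relation module'' mechanism does not exist.

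Acylindricity does not rescue the claim either. Since $H$ need not be quasi-convex, $H\cap G_e$ can be infinitely generated. This happens in the very application of this lemma in the proof of \cref{main}. There $H$ is the fibre $N$ of $L'\cong N\rtimes\Z$. So $N\cap G_e$ is the kernel of the fibration character restricted to a free edge group. That kernel is infinitely generated whenever the restriction is non-zero and the edge group is non-cyclic. Consequently everything downstream of this step rests on a false claim: quasi-convexity of $H\cap G_e$, finite generation of $H\cap G_v$, and acylindricity of $T_H$ ``by restriction''.

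The fix is to abandon $T_H$ and work with a different tree; your ``folding'' sentence half-suggests this. The paper invokes \cite[Theorem 4.4]{JaikinLinton_coherence}, an $\FP_2(\Q)$-analogue of Dunwoody's resolution, and this is where the $\FP_2(\Q)$ hypothesis is really used. It gives a cocompact $H$-tree $S$ dominating $T$, with finitely generated vertex and edge stabilisers. These stabilisers are finitely generated subgroups of vertex and edge stabilisers of $T$. Hence the vertex groups of $S$ are hyperbolic, and the edge groups are quasi-convex in $G$, and therefore also in the vertex groups.

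Acylindricity of $H$ acting on $S$ does \emph{not} follow by restriction, because the map $S\to T$ need not be injective. The paper obtains it from the finite height of quasi-convex subgroups (Gitik--Mitra--Rips--Sageev). Essentially distinct $H$-conjugates of an edge group are essentially distinct $G$-conjugates. Since $S$ has finitely many edge orbits, any sufficiently long path in $S$ therefore has finite stabiliser. The Bestvina--Feighn combination theorem then applies.
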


\begin{proof}
Let $H\leqslant G$ be a subgroup of type $\FP_2(\Q)$. By \cite[Theorem 4.4]{JaikinLinton_coherence}, since $H$ has type $\FP_2(\Q)$, $H$ acts cocompactly on a tree $S$ with finitely generated vertex and edge stabilisers dominating $T$. Since vertex stabilisers for $T$ are locally hyperbolic, the vertex stabilisers for $S$ are hyperbolic. Since finitely generated subgroups of edge stabilisers of $T$ are quasi-convex in $G$, edge stabilisers of $S$ are quasi-convex in $G$ and therefore they are also quasi-convex in the vertex stabilisers of $S$. Since the edge stabilisers of $S$ are quasi-convex in $G$, they have finite height in $G$ and hence also in $H$ (see \cite{GMRS98}). This implies that the action of $H$ on $S$ is acylindrical. Now by the Bestvina--Feighn combination theorem \cite{BF92} we see that $H$ is hyperbolic.
\end{proof}

We now have all the necessary ingredients to prove our main theorem.

\begin{proof}[Proof of \cref{main}]
By \cref{vanishing} we see that $b_i^{(2)}(G) = 0$ for all $i>1$. By \cref{embedding}, there is a finite index subgroup $H\leqslant G$ and a hyperbolic and virtually compact special group $L$ with $b_i^{(2)}(L) = 0$ for all $i$ so that $L \cong H*_{\psi}$ and where the Bass--Serre tree $T$ for the action is acylindrical and all edge stabilisers are free. By \cref{fibring}, $L$ contains a finite index subgroup of the from $N\rtimes \Z$ with $N$ of type $\FP_2(\Q)$. By \cref{local_hyp}, we see that $N$ is hyperbolic. By a result of Dahmani--Krishna--Mutanguha \cite[Proposition 5.4]{DKM25}, $N$ contains a finite index characteristic subgroup $N'$ which is a free product of free and surface groups. Thus, $L$ contains a finite index subgroup $L'\cong N'\rtimes \Z$. Thus $G\cap L'$ is a finite index subgroup of $G$ of the required form.
\end{proof}

\begin{proof}[Proof of \cref{main_cor}]
If $G$ is locally quasi-convex, then $G$ cannot contain any subgroup of the form $N\rtimes \Z$ with $N$ non-trivial finitely generated as $N$ would be a non quasi-convex subgroup of $G$ (see \cite[Proposition 3.16]{BridsonHaefliger_thebook}). Now suppose that $G$ contains no $F_n\rtimes \Z$ or $\pi_1(S)\rtimes\Z$ subgroups. \cref{main} implies that $G$ contains a finite index subgroup $H$ so that $H\cong N\rtimes \Z$ where $N\cong F*(\ast_{i\in I}\pi_1(S_{i}))$ where $F$ is free and each $S_{i}$ is a closed surface. Let $\psi\colon N\to N$ be the automorphism defining the semidirect product. If for some $i\in I$ and some $n\geqslant 1$, the subgroup $\psi^n(\pi_1(S_{i}))$ is conjugate to $\pi_1(S_{i})$, then $G$ contains a \{closed surface\}-by-cyclic subgroup, a contradiction. Thus, for each $i\in I$ and each $n\geqslant 1$ we have that $\psi^n(\pi_1(S_{i}))$ is not conjugate to $\pi_1(S_{i})$ within $N$. Consider the $H$-set $X = \{\pi_1(S_i)^h \mid i\in I, h\in H\}$. By \cite[Proposition 6.5]{JaikinLintonSanchez_OneRelProd} we have that $\cd_{\Q}(H, X)\leqslant 2$. By the same result, we also have that for each $x\in X$, $\Stab(x)\cong \pi_1(S_i)$ for some $i\in I$. Since $\cd_{\Q}(\pi_1(S_i)) = 2$ for each $i\in I$ and $\cd_{\Q}(H, X)\leqslant 2$, by observing the long exact sequence arising from the short exact sequence $0\to\omega_{\Q}(X)\to \Q[X]\to \Q\to 0$, we see that $\cd_{\Q}(H)\leqslant 2$ (see \cite[Lemma 3]{Al91} for instance). Combined with the fact that $b_2^{(2)}(H) = 0$ by \cref{vanishing}, we obtain that $H$, and thus $G$, is virtually free-by-cyclic by \cite{KielakLinton_FbyZ}. Finally, the local quasi-convexity of $G$ follows from \cite{Li25}.

For the final statement, we combine the above with the fact that a locally quasi-convex hyperbolic group is locally hyperbolic, see \cite[Proposition 3.9]{BridsonHaefliger_thebook} for instance.
\end{proof}

\subsection{An attempt at strengthening Theorem \ref{main}}

In this section we make explicit the obstruction to generalising our arguments from \cref{main}.

\begin{lem}
\label{lem:type_F}
Let $G$ be a group acting on a tree $T$ in which finitely generated subgroups of vertex stabilisers have type $F$. Every finitely generated subgroup of $G$ of type $\FP_2(\Q)$ has type $F$.
\end{lem}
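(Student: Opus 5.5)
Let $H \leqslant G$ be finitely generated and of type $\FP_2(\Q)$. The plan is to replace the given action of $H$ on $T$ by a cocompact action with finitely generated stabilisers. We then assemble a compact classifying space for $H$ as a graph of spaces.

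\textbf{Step 1.} As in the proof of \cref{local_hyp}, apply \cite[Theorem 4.4]{JaikinLinton_coherence}. Since $H$ is of type $\FP_2(\Q)$, it acts cocompactly on a tree $S$ whose vertex and edge stabilisers are finitely generated. This tree dominates $T$, so every vertex and edge stabiliser of $S$ fixes a vertex of $T$. Each such stabiliser is therefore a finitely generated subgroup of a vertex stabiliser of $T$, and by hypothesis it has type $F$. Thus $H$ splits as a finite graph of groups, and all vertex and edge groups have type $F$.

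\textbf{Step 2.} Choose finite $K(\pi,1)$ complexes for each vertex group and each edge group. Realise the edge monomorphisms by cellular maps; this is possible after replacing the complexes by homotopy-equivalent finite ones (mapping cylinders). Glue these along cylinders $K_e \times [0,1]$ to obtain the total space of a finite graph of aspherical spaces. By the standard theorem of Scott--Wall, the total space of a graph of aspherical spaces with $\pi_1$-injective edge maps is aspherical. Its fundamental group is $H$. It is a finite complex, so $H$ has type $F$.

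The main obstacle is Step 1: extracting from the $\FP_2(\Q)$ hypothesis alone a cocompact splitting with finitely generated edge groups. Finite generation of $H$ gives a cocompact subtree, but a priori the edge stabilisers need not be finitely generated. This is exactly what the cited result of Jaikin-Zapirain--Linton supplies, and I would rely on it rather than reprove it. Everything after that is the routine graph-of-spaces construction.
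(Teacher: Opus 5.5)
Your proposal is correct and follows the paper's proof essentially step for step. The paper also invokes \cite[Theorem 4.4]{JaikinLinton_coherence} to obtain a cocompact $H$-tree $S$ that dominates $T$ and has finitely generated vertex and edge stabilisers, which then have type $F$ by hypothesis. It then assembles a compact $K(H,1)$ as a finite graph of compact aspherical spaces over the quotient graph $H\backslash S$.
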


\begin{proof}
Let $H\leqslant G$ be a subgroup of type $\FP_2(\Q)$. By \cite[Theorem 4.4]{JaikinLinton_coherence}, there is a cocompact $H$-tree $S$ dominating $T$ in which vertex and edge stabilisers of $S$ are finitely generated (and contained in vertex and edge stabilisers of $T$). By assumption, the vertex and edge stabilisers of $S$ have type $F$. Since a finite graph of compact aspherical spaces is compact aspherical, we may construct a compact $K(H, 1)$ for $H$ out of the compact $K(H_v, 1)$ spaces for the vertex groups and $K(H_e, 1)$ spaces for the edge groups of the finite quotient graph of groups $H\backslash S$. Thus, $H$ has type $F$ as claimed.
\end{proof}

\begin{rem}
\label{rem:v_special}
If $G$ is a compact special group, then $G$ admits a quasi-convex hierarchy in the sense of Wise \cite{Wise_structureQCH}. Thus, by inductively applying \cref{lem:type_F} we see that any compact special group that is homologically coherent has all its finitely generated subgroups of type $F$.
\end{rem}

\begin{prop}
\label{prop:generalise}
One of the following holds:
\begin{enumerate}
\item\label{itm:1} Coherent virtually compact special hyperbolic groups are virtually extensions of free products of free and surface groups by $\Z$.
\item\label{itm:2} There exists a compact special hyperbolic group $N\rtimes \Z$ with $\cd_{\Q}(N) \in\{2, 3\}$ so that $N$ has type $F$, but is not hyperbolic.
\end{enumerate}
\end{prop}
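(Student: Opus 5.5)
We argue that if \ref{itm:2} fails, then \ref{itm:1} holds, by running the proof of \cref{main} with local hyperbolicity replaced by coherence. Let $G$ be a coherent virtually compact special hyperbolic group; we may pass to a finite-index compact special subgroup, since coherence is inherited by finite-index subgroups and the desired conclusion is virtual. By \cref{rem:v_special}, every finitely generated subgroup of $G$ has type $F$, hence type $\FP_\infty(\Q)$. So \cref{vanishing} gives $b_i^{(2)}(G)=0$ for all $i>1$. If $G$ is free we are done. Otherwise \cref{embedding} yields an acylindrical HNN-extension $L=G*_\psi$ over a quasi-convex free subgroup, with $L$ hyperbolic, virtually compact special, and all $L^2$-Betti numbers of $L$ vanishing. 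By \cref{fibring}, applied in every degree, a finite-index subgroup of $L$ has the form $N\rtimes\Z$ with $N$ of type $\FP_\infty(\Q)$ (indeed $\FP_n(\Q)$ for $n$ large enough to exceed the dimension). We may pass to a further finite-index subgroup so that $N\rtimes\Z$ is compact special, still hyperbolic.

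Next we show that $N$ is of type $F$ with $\cd_\Q(N)\leqslant 3$. The Bass--Serre tree of $L$ has vertex stabilisers conjugate to $G$, and finitely generated subgroups of $G$ have type $F$. Edge stabilisers are free. Hence \cref{lem:type_F} applies to $L$ acting on its tree: $N$ is finitely generated and of type $\FP_2(\Q)$, so $N$ has type $F$. Moreover, the proof of \cref{lem:type_F} gives a finite graph of groups decomposition of $N$ whose vertex groups are finitely generated subgroups of conjugates of $G$ and whose edge groups are free. We bound $\cd_\Q(N)$ by $\cd(N\rtimes\Z)-1\leqslant \cd(L)-1$. This bound needs $\cd(L)\leqslant 4$, which is not automatic, so a substitute argument is required. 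It is cleaner to use that $\cd_\Q(N)$ equals the top $n$ with $\H^n(N;\Q[N])\neq 0$, and that $\cd_\Q(N\rtimes\Z)=\cd_\Q(N)+1$ for $N$ of type $F$.

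The key case split is on $\cd_\Q(N)$. If $\cd_\Q(N)\leqslant 1$, then $N$ is free and the conclusion holds. If $\cd_\Q(N)\in\{2,3\}$ and $N$ is not hyperbolic, then $N\rtimes\Z$ witnesses \ref{itm:2}. If $N$ is hyperbolic, then \cite[Proposition 5.4]{DKM25} gives a characteristic finite-index subgroup $N'$ of $N$ that is a free product of free and surface groups. Intersecting $N'\rtimes\Z$ with $G$ then gives \ref{itm:1}, exactly as in the proof of \cref{main}.

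The main obstacle is excluding $\cd_\Q(N)\geqslant 4$. Here I would use coherence together with the vanishing of higher $L^2$-Betti numbers for all subgroups of $G$. Each vertex group of the splitting of $N$ is a finitely generated subgroup of $G$, hence a coherent group of type $F$ with vanishing higher $L^2$-Betti numbers. By \cref{thm: coherence and L2 Betti}\ref{item: finite cd subnormal}, each such vertex group is an iterated extension of a free group by cyclic groups. I would then try to use hyperbolicity to bound the cohomological dimension of these vertex groups by $3$. The reason is that a hyperbolic group cannot contain $\Z^2$, so in a subnormal series with cyclic quotients only a bounded number of infinite cyclic factors can stack, namely free-by-$\Z$ and then (free-by-$\Z$)-by-$\Z$. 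The last factor is delicate, and I expect this step to be the real difficulty.
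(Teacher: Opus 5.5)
There is a genuine gap: nothing in your argument handles a non-hyperbolic fibre $N$ with $\cd_\Q(N)\geqslant 4$. For $\cd_\Q(N)\leqslant 3$ your argument is the paper's. Since the HNN-extension is over a free group, $\cd_\Q(L)=\cd_\Q(G)$ once $\cd_\Q(G)\geqslant 2$. Combined with $\cd_\Q(N\rtimes\Z)=\cd_\Q(N)+1$, which you quote, the missing case is exactly $\cd_\Q(G)\geqslant 5$. There a non-hyperbolic $N$ of type $F$ does not witness \ref{itm:2}, because of the restriction $\cd_\Q(N)\in\{2,3\}$.

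Your proposed remedy does not close this case. The absence of $\Z^2$ gives no mechanism bounding the number of infinite cyclic factors in a subnormal series with free bottom. Consecutive cyclic quotients need not produce commuting elements; a fibred closed hyperbolic $3$-manifold group is already (free-by-$\Z$)-by-$\Z$. You give no reason the stacking must stop there. A warning sign is that this route never uses the failure of \ref{itm:2}. Applied to $G$ itself, it would show unconditionally that coherent compact special hyperbolic groups have cohomological dimension at most $3$. The appendix can only get that conditionally, which is why alternative \ref{itm:2} appears in the statement.

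The missing idea is to reduce the dimension to \emph{exactly} $4$ before fibring, so that the failure of \ref{itm:2} produces a contradiction. Suppose $\cd_\Q(G)\geqslant 4$ and descend Wise's quasi-convex hierarchy of $G$. At each splitting, some vertex group has cohomological dimension at least one less than the group being split. The hierarchy ends in trivial groups, so some hierarchy group $H$ has $\cd_\Q(H)=4$. It is quasi-convex, hence hyperbolic and virtually compact special, and it is coherent.

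Run your argument on $H$. This gives $L=H'*_\psi$ with $\cd_\Q(L)=4$ and a fibre $N$ of type $F$ with $\cd_\Q(N)=3$. Here you need the equality $\cd_\Q(N\rtimes\Z)=\cd_\Q(N)+1$, not just an upper bound. If \ref{itm:2} fails, $N$ is hyperbolic. Then \cite[Proposition 5.4]{DKM25} makes $N$ virtually a free product of free and surface groups, so its dimension is at most $2$, a contradiction.

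Hence the failure of \ref{itm:2} forces $\cd_\Q(G)\leqslant 3$, so $\cd_\Q(N)\leqslant 2$. Your case analysis then finishes as in the paper.
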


\begin{proof}
Assuming no group as in \ref{itm:2} exists we shall prove \ref{itm:1}. So let $G$ be a virtually compact special hyperbolic group in which every finitely generated subgroup has type $\FP_{2}(\Q)$. After passing to a finite index subgroup, we may assume that $G$ is compact special. Since $G$ is compact special, \cref{rem:v_special} implies that every finitely generated subgroup of $G$ has type $F$. In particular, by \cref{vanishing} we have $b_i^{(2)}(H) = 0$ for all $i>1$ and every finitely generated subgroup $H\leqslant G$ (since subgroups of special groups are special).

Suppose that $\cd_{\Q}(G)= 3$. By \cref{embedding}, there is a finite index subgroup $G'\leqslant G$ and a hyperbolic and virtually compact special group $L\cong G'*_{\psi}$ with $b_i^{(2)}(L) = 0$ for all $i$ and with $\cd_{\Q}(L) = \cd_{\Q}(G) = 3$. As in the proof of \cref{main}, $L$ contains a finite index subgroup $L'\cong N\rtimes \Z$ with $N$ of type $\FP_{\infty}(\Q)$ and $\cd_{\Q}(N) = 2$. Since $N$ has type $\FP_2(\Q)$ we may apply \cref{lem:type_F} to the splitting of $N$ induced by the HNN-extension $L\cong G'*_{\psi}$ and obtain that $N$ has type $F$. But this implies that $N$ is hyperbolic by assumption. Now we conclude as in the proof of \cref{main} (using \cite[Proposition 5.4]{DKM25}) that $L'$ and thus $G$ is virtually an extension of a free product of free and surface groups by $\Z$. 

Now suppose that $\cd_{\Q}(G)\geqslant 4$. Then $G$ contains a quasi-convex subgroup $H\leqslant G$ (a vertex group appearing in the quasi-convex hierarchy) with $\cd_{\Q}(H) = 4$. Since $H$ is quasi-convex, it is also hyperbolic and virtually compact special. By applying the same argument as above to $H$, we obtain a finite index subgroup $H'$ which embeds in a group $L' = N\rtimes \Z$ with $\cd_{\Q}(N) = \cd_{\Q}(L) - 1 = 3$ and so that $N$ has type $F$. By our assumption, $N$ is hyperbolic. But this contradicts \cite[Proposition 5.4]{DKM25}.
\end{proof}

\bibliography{bib}
\bibliographystyle{alpha}

\end{document}